\newtheorem{thm}{Theorem}[section]
\newtheorem{cor}[thm]{Corollary}
\newtheorem{lem}[thm]{Lemma}
\newtheorem{prop}[thm]{Proposition}
\newtheorem{defn}[thm]{Definition}
\newtheorem{exam}[thm]{Example}
\newtheorem*{acknowledgement}{Acknowledgements}
\newtheorem*{Theorem}{Theorem [Peskine-Szpiro]}
\def\Ext{\operatorname{\mathsf{Ext}}}
\def\Tor{\operatorname{\mathsf{Tor}}}
\def\CI{\operatorname{\mathsf{CI-dim}}}
\def\Hom{\operatorname{\mathsf{Hom}}}
\def\pd{\operatorname{\mathsf{pd}}}
\def\id{\operatorname{\mathsf{id}}}
\def\hd{\operatorname{\mathsf{H-dim}}}
\def\dim{\operatorname{\mathsf{dim}}}
\def\grade{\operatorname{\mathsf{grade}}}
\def\depth{\operatorname{\mathsf{depth}}}
\def\Tr{\mathsf{Tr}\hspace{0.01in}}
\DeclareMathOperator{\Ann}{Ann}
\DeclareMathOperator{\Ass}{Ass}
\newcommand{\rank}{\mbox{rank}\, }
\newcommand{\Spec}{\mbox{Spec}\, }
\newcommand{\Supp}{\mbox{Supp}\, }
\def\depth{\operatorname{\mathsf{depth}}}
\def\dim{\operatorname{\mathsf{dim}}}
\renewcommand{\Im}{\mbox{Im}\, }
\newcommand{\gd}{\mbox{G-dim}}
\newcommand{\gkd}{\mbox{G}_{K}\mbox{-dim}\,}
\newcommand{\gc}{\mbox{G}_{C}\mbox{-dim}\,}
\newcommand{\gk}{\mbox{G}_{K}\mbox{-}\,}
\newcommand{\ZZ}{\mathbb{Z}}
\def\gd{\operatorname{\mathsf{G-dim}}}
\def\ci{\operatorname{\mathsf{CI-dim}}}
\newcommand{\E}{\mbox{E}}
\renewcommand{\H}{\mbox{H}}
\newcommand{\g}{\mbox{G}}
\newcommand{\fa}{\mathfrak{a}}
\newcommand{\fm}{\mathfrak{m}}
\newcommand{\fp}{\mathfrak{p}}
\newcommand{\fq}{\mathfrak{q}}
\title[associated primes and syzygies of linked modules]
{associated primes and syzygies of linked modules}
\author[Celikbas, Dibaei, Gheibi, Sadeghi, Takahashi]
{Olgur Celikbas, Mohammad T. Dibaei, \\ Mohsen Gheibi, Arash Sadeghi and Ryo Takahashi}
\address{Olgur Celikbas \\
Department of Mathematics \\
West Virginia University\\
Morgantown, WV 26506-6310, U.S.A}
\email{olgur.celikbas@math.wvu.edu}
\address{Mohammad T. Dibaei \\
Faculty of Mathematical Sciences and Computer\\
Kharazmi University\\
and School of Mathematics Institute for Research in Fundamental Sciences (IPM)\\
P.O. Box19395-5746, Tehran, Iran}
\email{dibaeimt@ipm.ir}
\address{Mohsen Gheibi \\
Department of Mathematics\\
University of Nebraska\\
Lincoln, NE, 68588,USA}
\email{mohsen.gheibi@huskers.unl.edu}
\address{Arash Sadeghi \\
 School of Mathematics\\
Institute for Research in Fundamental Sciences\\
(IPM), P.O. Box: 19395-5746, Tehran, Iran}
\email{sadeghiarash61@gmail.com}
\address{Ryo Takahashi\\
Graduate School of Mathematics, Nagoya University, Furocho, Chikusaku,\\
Nagoya 464-8602, Japan}
\email{takahashi@math.nagoya-u.ac.jp}
\date{December 14, 2016}
\keywords{Linkage of Modules, Geometrically linked ideals, Huneke-Wiegand conjecture, torsion-free module, vanishing of $\Tor$, homological dimension.\\
M. T. Dibaei was supported in part by a grant from IPM (No. 94130110). M. Gheibi was supported in part by a grant from IPM (No. $92130025$).
A. Sadeghi's research was supported by a grant from IPM. R. Takahashi was partly supported by JSPS Grant-in-Aid for Scientific Research (C) 25400038.}
 \subjclass[2010]{13D07,  13D02,  13C40, 13D05}
\begin{document}

\begin{abstract}
Motivated by the notion of geometrically linked ideals, we show that over a Gorenstein local ring $R$, if a Cohen-Macaulay $R$-module $M$ of grade $g$ is linked to an $R$-module $N$ by a Gorenstein ideal $c$, such that $\Ass_R(M)\cap \Ass_R(N)=\emptyset$, then $M\otimes_RN$ is isomorphic to direct sum of copies of $R/\fa$, where $\fa$ is a Gorenstein ideal of $R$ of grade $g+1$. We give a criterion for the depth of a local ring $(R,\fm,k)$ in terms of the homological dimensions of the modules linked to the syzygies of the residue field $k$. As a result we characterize a local ring $(R,\fm,k)$ in terms of the homological dimensions of the modules linked to the syzygies of $k$.
\end{abstract}

\maketitle

\section{Introduction}
 Peskine and Szpiro \cite{PS}, introduced the notion of linkage and geometric linkage of projective algebraic varieties; recall that two unmixed projective varieties $X$ and $Y$ of codimension $g$ are \emph{geometrically linked} if $X$ and $Y$ have no common components and the scheme theoretic union $X\cup Y$ is a complete intersection. More generally, two non-zero proper ideals $I$ and $J$ in a local ring $R$ are said to be \emph{linked} by an ideal $c$ contained in $I\cap J$ if $I=(c :_RJ)$ and $J=(c:_RI)$ (denoted $I \underset{c}\sim J$). If in addition, $\Ass_R(R/I)\cap \Ass_R(R/J)=\emptyset$, then $I$ and $J$ are said to be \emph{geometrically linked} by $c$. If $c=0$ we just say $I$ is linked (or geometrically linked) to $J$. In their landmark paper \cite{PS}, Peskine and Szpiro prove the following:
 \begin{Theorem}\label{PS1} Let $R$ be a Gorenstein local ring and let $I$ be a Cohen-Macaulay ideal of grade $g$ (i.e. $R/I$ is Cohen-Macaulay ring). Suppose that $I$ is geometrically linked to an ideal $J$ by a Gorenstein ideal $c$ (i.e. $R/c$ is a Gorenstein ring). Then $I+J$ is a Gorenstein ideal of grade $g+1$.
 \end{Theorem}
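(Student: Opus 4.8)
The plan is to pass to the Gorenstein quotient ring $\bar R:=R/c$ and to exploit the self-duality that linkage provides there. Writing $\bar I:=I/c$ and $\bar J:=J/c$, the relation $I\underset{c}\sim J$ translates into $\bar I=(0:_{\bar R}\bar J)$ and $\bar J=(0:_{\bar R}\bar I)$, equivalently $\bar J=\Hom_{\bar R}(\bar R/\bar I,\bar R)$ and $\bar I=\Hom_{\bar R}(\bar R/\bar J,\bar R)$. Since $R$ is Gorenstein and $\grade c=g$, the ring $\bar R$ is Gorenstein local of dimension $d:=\dim R-g$, and $R/I=\bar R/\bar I$ is a maximal Cohen--Macaulay $\bar R$-module; hence $\Ext^i_{\bar R}(R/I,\bar R)=0$ for all $i>0$, and likewise $\Ext^i_{\bar R}(R/J,\bar R)=0$ for all $i>0$ once one uses the classical companion fact that $R/J$ is Cohen--Macaulay as well \cite{PS} (or, directly, that $c_{\fp}$ is $\fp R_{\fp}$-primary for every $\fp$ minimal over $c$).

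First I would show $I\cap J=c$, i.e. $\bar I\cap\bar J=0$. From $\Ass_R(R/I)\cap\Ass_R(R/J)=\emptyset$ one obtains that $I+J$ has height at least $g+1$: a prime $\fp$ minimal over $I+J$ of height $g$ would be minimal over $I$ and over $J$, hence would belong to $\Ass_R(R/I)\cap\Ass_R(R/J)$, which is empty. Therefore $\bar R/(\bar I+\bar J)$ has positive grade over $\bar R$, so $\Hom_{\bar R}(\bar R/(\bar I+\bar J),\bar R)=0$; but this Hom module equals $(0:_{\bar R}\bar I)\cap(0:_{\bar R}\bar J)=\bar J\cap\bar I$, whence $\bar I\cap\bar J=0$.

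Next I would feed the Mayer--Vietoris exact sequence, which after the previous step reads $0\to\bar R\to\bar R/\bar I\oplus\bar R/\bar J\to M\to 0$ with $M:=R/(I+J)$, into the functor $\Hom_{\bar R}(-,\bar R)$. Using $\Hom_{\bar R}(\bar R/\bar I,\bar R)=\bar J$, $\Hom_{\bar R}(\bar R/\bar J,\bar R)=\bar I$, the higher $\Ext$-vanishing recorded above, and $\Hom_{\bar R}(M,\bar R)=\bar I\cap\bar J=0$, the long exact sequence collapses to: $\Ext^i_{\bar R}(M,\bar R)=0$ for every $i\neq 1$, together with a four-term exact sequence $0\to\bar J\oplus\bar I\to\bar R\to\Ext^1_{\bar R}(M,\bar R)\to 0$ whose left-hand map is dual to the canonical map $\bar R\to\bar R/\bar I\oplus\bar R/\bar J$ and therefore has image $\bar I+\bar J$. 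Thus $\Ext^1_{\bar R}(M,\bar R)\cong\bar R/(\bar I+\bar J)=M$, and this is an isomorphism of $M$-modules.

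Finally I would invoke the standard criterion that, over the Gorenstein ring $\bar R$, a finitely generated module is Cohen--Macaulay exactly when its $\Ext$-modules into $\bar R$ are concentrated in the single cohomological degree equal to its grade. The vanishing $\Ext^i_{\bar R}(M,\bar R)=0$ for $i\neq 1$ then shows that $M$ is Cohen--Macaulay of grade $1$ over $\bar R$, so that $\grade_R(I+J)=g+1$ and $R/(I+J)$ is Cohen--Macaulay, and moreover that $\Ext^1_{\bar R}(M,\bar R)$ is the canonical module $\omega_M$; combined with the isomorphism above this yields $\omega_M\cong M$, which forces the Cohen--Macaulay local ring $R/(I+J)$ to be Gorenstein. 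The main obstacle I anticipate is the bookkeeping in the third step: pinning down the connecting map $\bar J\oplus\bar I\to\bar R$ as the difference of the two natural inclusions, so that its cokernel is genuinely $\bar R/(\bar I+\bar J)$ and not something larger, and checking that the resulting isomorphism $\Ext^1_{\bar R}(M,\bar R)\cong M$ is $M$-linear; a lesser point is assembling the two standard linkage facts used at the outset, $\grade c=g$ and the Cohen--Macaulayness of $R/J$.
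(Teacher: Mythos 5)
Your argument is correct and is essentially the route the paper itself takes: the paper states this Peskine--Szpiro theorem without proof and instead proves the generalization Theorem \ref{PS2}, whose proof is precisely your reduction to $\bar{R}=R/c$, the identity $\bar{I}\cap\bar{J}=0$ extracted from the disjointness of associated primes, and the dualization of $0\to\bar{R}\to\bar{R}/\bar{I}\oplus\bar{R}/\bar{J}\to R/(I+J)\to 0$ into $\bar{R}$ to get $\Ext^i_{\bar{R}}(R/(I+J),\bar{R})=0$ for $i\neq 1$ together with $\Ext^1_{\bar{R}}(R/(I+J),\bar{R})\cong R/(I+J)$. The only differences are at the two ends: the paper sources $\grade(c)=\grade(I)$ from \cite[Lemma 5.8]{DS} and the Cohen--Macaulayness of $R/J$ from total reflexivity of the linked module (your parenthetical alternative for the latter is not a proof, but the citation suffices), and it concludes that $I+J$ is Gorenstein of grade $g+1$ via Golod's Lemma \ref{G1} rather than your local-duality/canonical-module criterion, which is an equally valid specialization in the classical Gorenstein setting.
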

 Therefore a new Gorenstein ideal can be obtained from the sum of geometrically linked ideals.

 Another important result about geometrically linked ideals was proved by Ulrich in \cite{U}. Recall that an ideal $I$ of a local ring $R$ is called \emph{licci} (in the linkage class of a complete intersection) if there exists a chain of linked ideals $I\underset{c_1}\sim I_1 \underset{c_2}\sim \cdots \underset{c_n}\sim I_n$, where $c_1,\cdots,c_n$, and $I_n$ can be generated by a regular sequence.  Ulrich showed that if $R$ is a Gorenstein local ring, then the sum of two geometrically linked licci ideals of $R$ is again a licci ideal. Also Huneke \cite{H1}, showed that if $R$ is Gorenstein and $I$ is a licci ideal then the Koszul homologies $\H_i(I)$ of $I$ are Cohen-Macaulay, for all $i\geq 0$. So over a Gorenstein local ring, finding geometrically linked ideals is useful and important for finding other nice ideals. For more results about geometrically linked ideals, see \cite{PS}, \cite{U}, \cite{J}, \cite{H} and \cite{Sc}.

Linkage of ideals has been generalized to modules by Yoshino and Isogawa \cite{YI}, by Marsinkovsky and Strooker \cite{MS} and by Nagel \cite{N}, in different ways and, based on these generalizations, several works have been done on studying classic results of linkage in the context of modules; see for example \cite{DGHS}, \cite{DS}, \cite{DS2} and \cite{IT}. In this paper, we are interested in linkage of modules in the sense of \cite{MS}.

Let $R$ be a commutative Noetherian local ring, $M$ a finitely generated $R$-module, and $F_1 \overset{f}\rightarrow F_0 \rightarrow M \rightarrow 0 $ a minimal free presentation of $M$. The image of $f$ is called the first syzygy of $M$ and it is denoted $\Omega M$. The transpose of $M$ (denoted $\Tr M$) is defined to be the cokernel of the induced map $f^*:\Hom_R(F_0,R)\rightarrow \Hom_R(F_1,R)$.
According to \cite{MS}, $M$ is \emph{horizontally linked} to an $R$-module $N$ if $M\cong \Omega\Tr N$ and $N\cong \Omega\Tr M$.
The transpose and syzygy functors and their compositions already have been studied by Auslander and Bridger; see \cite{AB}.
Two $R$--modules $M$ and $N$ are said to be \emph{linked by an ideal} $c\subseteq \Ann_R(M)\cap\Ann_R(N)$ (denoted $M\underset{c}\sim N$), if $M$ is horizontally linked to $N$ over $R/c$. It turns out that two ideals $I$ and $J$ of $R$ are linked by an ideal $c\subseteq I\cap J$ if and only if the $R$-module $R/I$ is linked to $R/J$ by $c$; see \cite{MS}.

According to the notion of the geometrically linked ideals, the first purpose of this paper is to study linked modules that have no common associated primes. In section 3, we give a generalization of the above theorem. More precisely, we prove the following:

\begin{thm}\label{PS2} Let $R$ be a Cohen-Macaulay local ring, $K$ a semidualizing $R$-module and $M$ a $\gk$perfect $R$-module of grade $n<\dim R$. Assume $M$ is linked to $N$ by a $\gk$Gorenstein ideal $c$, and $\Ass_R(M)\cap \Ass_R(N)=\emptyset$. Then $M\otimes_RN$ is free over $S:=R/(\Ann_R(M)+\Ann_R(N))$ and $\Ann_R(M)+\Ann_R(N)$ is a $\gk$Gorenstein ideal of grade $n+1$.
\end{thm}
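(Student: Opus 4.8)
The plan is to adapt the classical argument of Peskine and Szpiro, carrying the semidualizing module $K$ through a reduction modulo $c$. Put $\overline{R}=R/c$. Since $c$ is a $\gk$perfect ideal of grade $n$ over the Cohen--Macaulay ring $R$, the ring $\overline{R}$ is Cohen--Macaulay, and since $c$ is moreover $\gk$Gorenstein the $K$-canonical module $\Ext^{n}_{R}(\overline{R},K)$ of $\overline{R}$ is free of rank one; hence over $\overline{R}$ the $\G_K$-theory becomes ordinary $\gd$-theory, and "$\gk$Gorenstein ideal of $\overline{R}$'' means an ideal $\fb$ with $\gd_{\overline{R}}(\overline{R}/\fb)=\grade(\fb)$ and $\Ext^{\grade(\fb)}_{\overline{R}}(\overline{R}/\fb,\overline{R})\cong\overline{R}/\fb$. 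By the change-of-rings formula for $\G_K$-dimension together with $\gkd(M)=n=\grade(c)$, the $\overline{R}$-module $M$ is totally reflexive; since $M$ and $N$ are horizontally linked over $\overline{R}$, so is $N$, and $\Ext^{i}_{\overline{R}}(M,\overline{R})=0=\Ext^{i}_{\overline{R}}(N,\overline{R})$ for $i>0$; by the Auslander--Bridger formula $M$ and $N$ are then maximal Cohen--Macaulay over $\overline{R}$. Finally $M\otimes_{R}N\cong M\otimes_{\overline{R}}N$ and $S=\overline{R}/(\Ann_{R}M+\Ann_{R}N)$. So it suffices to treat the case $c=0$: writing $R$ for $\overline{R}$, we must show, assuming $R$ Cohen--Macaulay local, $K$ free of rank one, $M$ totally reflexive, $N=\Omega\Tr M$, and $\Ass M\cap\Ass N=\emptyset$, that $M\otimes_{R}N$ is free over $S$ and that $\fa:=\Ann M+\Ann N$ satisfies $\gd_{R}(S)=\grade(\fa)=1$ and $\Ext^{1}_{R}(S,R)\cong S$.

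Next I would record the vanishings forced by the geometric hypothesis. As $M$ and $M^{*}=\Hom_{R}(M,R)$ are maximal Cohen--Macaulay over the Cohen--Macaulay ring $R$, every associated prime of $M$, equivalently of $M^{*}$ (using $M^{**}=M$), is a minimal prime of $R$; likewise for $N$. Since $\Ass\Hom_{R}(A,B)=\Supp A\cap\Ass B$, any $\fp\in\Ass\Hom_{R}(M^{*},N)=\Supp M\cap\Ass N$ is a minimal prime of $R$ lying in $\Supp M$, hence $\fp\in\Ass M\cap\Ass N=\emptyset$; so $\Hom_{R}(M^{*},N)=0$, and symmetrically $\Hom_{R}(N^{*},M)=0$ and $\Hom_{R}(\Tr M,N)=0=\Hom_{R}(\Tr N,M)$. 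Applying the same reasoning to $\fa$ and $\Ass R=\Min R$ shows $\fa$ lies in no minimal prime of $R$, i.e.\ $\grade(\fa)\ge1$.

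The core is the following identification. From a minimal presentation $R^{b_{1}}\to R^{b_{0}}\to M\to0$ one gets the linkage sequences $0\to M^{*}\to R^{b_{0}}\to N\to0$ and $0\to N\to R^{b_{1}}\to\Tr M\to0$ (and dually $M^{*}\cong\Omega N$, $N^{*}\cong\Omega M$). Since $\Tr M$ is totally reflexive, the Auslander--Bridger exact sequence
\[
0\longrightarrow\Ext^{1}_{R}(\Tr M,N)\longrightarrow M\otimes_{R}N\longrightarrow\Hom_{R}(M^{*},N)\longrightarrow\Ext^{2}_{R}(\Tr M,N)\longrightarrow0
\]
together with $\Hom_{R}(M^{*},N)=0$ yields $M\otimes_{R}N\cong\Ext^{1}_{R}(\Tr M,N)$ and $\Ext^{2}_{R}(\Tr M,N)=0$. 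Applying $\Hom_{R}(-,N)$ to $0\to N\to R^{b_{1}}\to\Tr M\to0$ and using $\Ext^{1}_{R}(\Tr M,R)=0$ (so that every homomorphism from $N$ to a free module extends along $N\hookrightarrow R^{b_{1}}$, whence the image of $\Hom_{R}(R^{b_{1}},N)\to\Hom_{R}(N,N)$ is exactly the set of endomorphisms of $N$ factoring through a free module), one identifies $\Ext^{1}_{R}(\Tr M,N)$ with the stable endomorphism module $\underline{\Hom}_{R}(N,N)$. Thus
\[
M\otimes_{R}N\ \cong\ \underline{\Hom}_{R}(N,N)\ \cong\ \underline{\Hom}_{R}(M,M),
\]
an $S$-module (and, incidentally, $\Ext^{1}_{R}(N,N)\cong\Ext^{2}_{R}(\Tr M,N)=0$).

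What remains---and this is the main obstacle---is to show that $\underline{\Hom}_{R}(N,N)$ is free over $S$ and that $S$ satisfies $\gd_{R}(S)=\grade(\fa)=1$ with $\Ext^{1}_{R}(S,R)\cong S$. I would carry out the local-duality computation underlying Peskine--Szpiro's original proof in this generality: dualize into $R$ the short exact sequences attached to the linkage data above (and the sequence $0\to\fa\to R\to S\to0$), and use that $M,N,\Tr M,\Tr N$ are totally reflexive together with $\Hom_{R}(M^{*},N)=\Hom_{R}(N^{*},M)=0$ to show that $\uhom_{R}(S,R)$ is concentrated in cohomological degree $1$, equal to $S$ there, and to read off a free presentation of $M\otimes_{R}N$ over $S$ exhibiting its $S$-freeness. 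In the cyclic case $M=R/I$, $N=R/J$ this is precisely the computation from the pushout sequence $0\to R/c\to R/I\oplus R/J\to R/(I+J)\to0$, which gives $\Ext^{1}_{R/c}(R/(I+J),R/c)\cong R/(I+J)$; the genuinely new difficulties in general are the replacement of that pushout sequence by the sequences attached to $N\cong\Omega\Tr M$ (introducing the free terms $R^{b_{0}},R^{b_{1}}$ and forcing one to track ranks), establishing the $S$-freeness---rather than just the support---of $\underline{\Hom}_{R}(N,N)$, and keeping the semidualizing module under control, which is exactly what the identification $\Ext^{n}_{R}(R/c,K)\cong R/c$ achieves. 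Once $\Ext^{1}_{R}(S,R)\cong S$ and $\Ext^{i}_{R}(S,R)=0$ for $i\neq1$ are in hand, undoing the reduction modulo $c$ shows that $\Ann_{R}M+\Ann_{R}N$ is a $\gk$Gorenstein ideal of grade $n+1$, and the $S$-freeness of $M\otimes_{R}N$ finishes the proof.
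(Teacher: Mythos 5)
Your reduction to $\overline{R}=R/c$ is sound and matches the paper's first step: $c$ being $\gk$Gorenstein makes $\Ext^n_R(\overline{R},K)\cong\overline{R}$, so by Golod's change-of-rings result $M$ and $N$ become totally reflexive, maximal Cohen--Macaulay $\overline{R}$-modules and the whole problem becomes a statement about horizontally linked modules over $\overline{R}$ with ordinary G-dimension. The vanishing $\Hom_R(M^*,N)=0$ and the identification $M\otimes_R N\cong\Ext^1_R(\Tr M,N)\cong\underline{\Hom}_R(N,N)$ via the Auslander--Bridger sequence are also correct. But at that point the proof stops: you yourself label the $S$-freeness of $\underline{\Hom}_R(N,N)$ and the computation of $\Ext^{\ast}_{\overline{R}}(S,\overline{R})$ as ``the main obstacle'' and only announce an unexecuted ``local-duality computation'' that would ``read off a free presentation of $M\otimes_R N$ over $S$.'' These are precisely the two conclusions of the theorem, and nothing in the outline shows how dualizing the linkage sequences would produce freeness of a stable endomorphism module over $S$; so there is a genuine gap, not a routine verification left to the reader.

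The missing idea is the paper's Proposition 3.3: the hypothesis $\Ass_R(M)\cap\Ass_R(N)=\emptyset$ forces $M$ and $N$ themselves to be free over $\overline{R}/I$ and $\overline{R}/J$, where $I=\Ann_{\overline{R}}(M)$ and $J=\Ann_{\overline{R}}(N)$. The argument is short: tensor the linkage sequence $0\to M^{*}\to F_0^{*}\to N\to 0$ with $\overline{R}/J$; since $\Ass_{\overline{R}}(N)=\Ass_{\overline{R}}(\overline{R}/J)$ (the annihilator of a horizontally linked module over an unmixed ring has the same associated primes), one gets $\Supp_{\overline{R}}(M^{*}\otimes\overline{R}/J)\cap\Ass_{\overline{R}}(\overline{R}/J)=\emptyset$, hence every homomorphism from $M^{*}\otimes\overline{R}/J$ into the free $\overline{R}/J$-module $F_0^{*}\otimes\overline{R}/J$ is zero, and therefore $N\cong N\otimes\overline{R}/J\cong F_0^{*}\otimes\overline{R}/J$ is $\overline{R}/J$-free; symmetrically $M$ is $\overline{R}/I$-free. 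This single step dissolves both remaining difficulties at once: $M\otimes_R N$ is then a direct sum of copies of $\overline{R}/(I+J)=S$, giving the freeness outright, and since $\overline{R}/I$, $\overline{R}/J$ are direct summands of the totally reflexive modules $M$, $N$, they are totally reflexive, so the classical Peskine--Szpiro pushout sequence $0\to\overline{R}\to\overline{R}/I\oplus\overline{R}/J\to S\to 0$ (valid because $I\cap J=0$ by geometric linkage) yields $\Ext^i_{\overline{R}}(S,\overline{R})=0$ for $i\neq 1$ and $\Ext^1_{\overline{R}}(S,\overline{R})\cong S$, which Golod's isomorphism of functors converts into the statement that $\Ann_R(M)+\Ann_R(N)$ is a $\gk$Gorenstein ideal of grade $n+1$. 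In other words, the cyclic case you hoped to imitate is not an analogy to be generalized by tracking ranks through $N\cong\Omega\Tr M$; the disjointness of associated primes literally reduces the general case to the cyclic one, and without that reduction your plan does not reach either conclusion.
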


For definition of semidualizing module, $\gk$perfect module, and $\gk$Gorenstein ideal, see section 2.
Along with studying the results of geometrically linked ideals in the context of modules, also we prove the following:

\begin{thm}\label{Int1} Let $R$ be a generically Gorenstein unmixed local ring of dimension $d\geq 1$, and let $M$ be a finitely generated $R$-module. Assume $M$ is horizontally linked to $\lambda M$. Then the following statements are equivalent:
 \begin{enumerate}[\rm(i)]
  \item $\Ass_R(M)\cap \Ass_R(\lambda M)=\emptyset$.
  \item $\Ann_R(M)$ is geometrically linked to $\Ann_R(\lambda M)$.
  \item $\Ann_R(M)$ is linked to $\Ann_R(\lambda M)$ and $\Tor^R_1(M,\lambda M)=0$.
  \item $M$ is free over $R/\Ann_R(M)$ and $\Ext^1_R(M,M)=0$.
 \end{enumerate}
\end{thm}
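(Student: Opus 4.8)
The strategy is to prove $(i)\Rightarrow(iv)\Rightarrow(ii)\Rightarrow(i)$ and, separately, $(iv)\Rightarrow(iii)\Rightarrow(i)$; together these give the four equivalences. Write $I=\Ann_R(M)$, $J=\Ann_R(\lambda M)$, $\bar R=R/I$, and $(-)^{\ast}=\Hom_R(-,R)$. I will use the standard features of horizontal linkage: $M$ and $\lambda M$ are stable, torsionless, of grade $0$, satisfy $\lambda\lambda M\cong M$, sit in exact sequences $0\to M^{\ast}\to F\to\lambda M\to 0$ and $0\to(\lambda M)^{\ast}\to G\to M\to 0$ with $F,G$ free and nonzero, and $\lambda$ commutes with localization. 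Since $R$ is unmixed and generically Gorenstein, $\Ass_R R=\Min R$ and $R_{\fp}$ is artinian Gorenstein for every $\fp\in\Min R$. As $M$ is torsionless, $\Ass_R(M)\subseteq\Ass_R R=\Min R$, and since any minimal prime of $R$ containing $I$ is minimal over $I$, this yields $\Ass_R(M)\subseteq\Ass_R(\bar R)$; likewise $\Ass_R(\lambda M)\subseteq\Ass_R(R/J)$. In particular $(ii)\Rightarrow(i)$ is immediate.

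For $(i)\Rightarrow(iv)$ — the heart of the matter — I would first show $M$ is free at every minimal prime of $R$. Fix $\fp\in\Min R$. If $\fp\notin\Ass_R(M)$ then $M_{\fp}=0$ by minimality of $\fp$. If $\fp\in\Ass_R(M)$ then $(i)$ and minimality give $\fp\notin\Ass_R(\lambda M)$, so $(\lambda M)_{\fp}=0$, i.e.\ $\lambda_{R_{\fp}}(M_{\fp})=0$ and hence $\Tr_{R_{\fp}}(M_{\fp})$ is free; therefore $M_{\fp}$, being stably isomorphic to $\Tr_{R_{\fp}}\Tr_{R_{\fp}}(M_{\fp})=0$, is free. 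Localizing $0\to M^{\ast}\to F\to\lambda M\to 0$ at $\fp\in\Ass_R(M)$ (where $(\lambda M)_{\fp}=0$) gives $(M_{\fp})^{\ast}\cong F_{\fp}$, so the free module $M_{\fp}$ has rank $\rank F=\mu_R(M)=:n$, independently of $\fp$; in particular $M_{\fp}$ is faithful, $I_{\fp}=0$, and $M_{\fp}\cong\bar R_{\fp}^{\,n}$ for every $\fp\in\Min\bar R=\Ass_R(M)$. Next, $\bar R$ is unmixed: $M$ is a faithful $\bar R$-module, so $\bar R\hookrightarrow\Hom_{\bar R}(M,M)$ and hence $\Ass_{\bar R}(\bar R)\subseteq\Ass_{\bar R}(\Hom_{\bar R}(M,M))=\Supp_{\bar R}(M)\cap\Ass_{\bar R}(M)=\Ass_R(M)\subseteq\Min R$, so each associated prime of $\bar R$ is minimal over $I$. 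Finally the minimal cover $R^{n}\twoheadrightarrow M$ factors as a surjection $\pi\colon\bar R^{\,n}\twoheadrightarrow M$; for $\fp\in\Min\bar R$ the map $\pi_{\fp}\colon\bar R_{\fp}^{\,n}\twoheadrightarrow M_{\fp}\cong\bar R_{\fp}^{\,n}$ is a surjective endomorphism of a finite length module, hence bijective, so $\Ker\pi$ vanishes at all minimal primes of $\bar R$; since $\Ker\pi\subseteq\bar R^{\,n}$ and $\bar R$ is unmixed, $\Ass_{\bar R}(\Ker\pi)\subseteq\Min\bar R$, forcing $\Ker\pi=0$ and $M\cong\bar R^{\,n}$. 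For $\Ext^1_R(M,M)=0$ it then suffices to take $M=\bar R$: from $0\to I\to R\to\bar R\to 0$ one gets $\Ext^1_R(\bar R,\bar R)\cong\Hom_{\bar R}(I/I^2,\bar R)$ (the restriction $\Hom_R(R,\bar R)\to\Hom_R(I,\bar R)$ being zero since $I\bar R=0$), which vanishes because $I_{\fp}=0$ for every $\fp\in\Ass_{\bar R}(\bar R)=\Min\bar R=\Ass_R(M)$.

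For $(iv)\Rightarrow(ii)$ and $(iv)\Rightarrow(iii)$: from $M\cong\bar R^{\,r}$ with $r\ge 1$, $\bar R=R/I$ inherits stability and torsionlessness, so $\bar R$ is horizontally linked, which by Martsinkovsky--Strooker says precisely that $I$ is linked by $0$ to $(0:_R I)$ and $\lambda\bar R\cong R/(0:_R I)$; combined with $\lambda M\cong(\lambda\bar R)^{\,r}$ this gives $J=(0:_R I)$, so $I\underset0\sim J$. As above $\bar R$ is unmixed and $\Ext^1_R(M,M)=0$ forces $I_{\fp}=0$ for all $\fp\in\Min\bar R$; hence for any $\fp\in\Ass_R(R/I)\cap\Ass_R(R/J)\subseteq\Min R$ we would get $I_{\fp}=0$ and so $(0:_RI)_{\fp}=R_{\fp}$, contradicting $\fp\in V(J)$. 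Thus $\Ass_R(R/I)\cap\Ass_R(R/J)=\emptyset$ and $(ii)$ holds. For $(iii)$, $\Tor^R_1(M,\lambda M)\cong\Tor^R_1(R/I,R/J)^{\,r^2}\cong\big((I\cap J)/IJ\big)^{\,r^2}$, and $IJ=I\cdot(0:_RI)=0$, so this equals $(I\cap J)^{\,r^2}$; since $\Ass_R(I\cap J)\subseteq\Supp_R(I\cap J)\cap\Min R\subseteq V(I)\cap\Min R=\Min\bar R$ and $I_{\fp}=0$ there, we get $I\cap J=0$, whence $\Tor^R_1(M,\lambda M)=0$.

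It remains to prove $(iii)\Rightarrow(i)$, and here generic Gorenstein-ness is genuinely needed. Suppose some $\fp\in\Min R$ lay in $V(I)\cap V(J)$; since $I\underset0\sim J$ and $\fp\in V(J)$ we cannot have $I_{\fp}=0$, so $M_{\fp}\neq 0$ is annihilated by $I_{\fp}\neq 0$ and is therefore a nonzero stable module over the artinian Gorenstein ring $A:=R_{\fp}$. Now $\Tor^A_1\!\big(M_{\fp},\lambda_A M_{\fp}\big)\cong\Tor^A_2\!\big(\Tr_A M_{\fp},M_{\fp}\big)$, and via the standard relations between $\Tr$, $\Tor$ and $\Ext$ together with Matlis duality over $A$ one identifies this with a term of the form $D\,\widehat{\Ext}^{\,\bullet}_A(M_{\fp},M_{\fp})$; the key local lemma is that such a term is nonzero for every nonzero stable $A$-module, which forces $\Tor^R_1(M,\lambda M)_{\fp}\neq 0$ and contradicts $(iii)$. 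Hence $V(I)\cap V(J)\cap\Min R=\emptyset$, and since $\Ass_R(R/I),\Ass_R(R/J)\subseteq\Min R$ this gives $\Ass_R(R/I)\cap\Ass_R(R/J)=\emptyset$, whence $(i)$ by the inclusions of the first paragraph. The main obstacle is $(i)\Rightarrow(iv)$ — precisely the passage from the easy fact that $M$ is free at every minimal prime of $R$ to freeness over $R/\Ann_R(M)$, which requires both that the local rank be constant (extracted from the dual sequence $0\to M^{\ast}\to F\to\lambda M\to 0$) and that $R/\Ann_R(M)$ be unmixed so that the kernel of $(R/\Ann_R(M))^{\,n}\to M$, being torsionless and zero at all minimal primes, vanishes; a secondary technical point is the local vanishing lemma invoked for $(iii)\Rightarrow(i)$.
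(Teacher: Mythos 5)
Your overall scheme ((i)$\Rightarrow$(iv)$\Rightarrow$(ii)$\Rightarrow$(i) together with (iv)$\Rightarrow$(iii)$\Rightarrow$(i)) is sound, and most of it is carried out correctly by a route genuinely different from the paper's. For (i)$\Rightarrow$(iv) you prove freeness of $M$ over $\bar R=R/\Ann_R(M)$ by hand: constant rank $\mu_R(M)$ at the primes of $\Ass_R(M)$ read off from $0\to M^*\to F_0^*\to\lambda M\to 0$, unmixedness of $\bar R$ via $\bar R\hookrightarrow\Hom_{\bar R}(M,M)$, and injectivity of the induced cover $\bar R^{\,\mu_R(M)}\twoheadrightarrow M$ by checking it at the minimal primes of $\bar R$. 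The paper (Proposition \ref{T1}) gets the same conclusion in one stroke by tensoring that sequence with $R/\Ann_R(\lambda M)$ and killing the first map by a $\Hom$-vanishing argument; your version is longer but correct, and your direct computations of $\Ext^1_R(\bar R,\bar R)\cong\Hom_R(I,\bar R)$ and $\Tor^R_1(R/I,R/J)\cong I\cap J$ adequately replace the paper's Proposition \ref{P1}. (One small expository omission: in (iv)$\Rightarrow$(ii) the claim that $\Ext^1_R(M,M)=0$ \emph{forces} $I_\fp=0$ at every $\fp\in\Ass(\bar R)$ is the converse of what you proved ``above''; it does follow, since a nonzero $I_\fp$ surjects onto $\kappa(\fp)$, which embeds in $\bar R_\fp$, making $\Hom_R(I,\bar R)_\fp\neq 0$.)

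The genuine gap is in (iii)$\Rightarrow$(i). Everything there reduces to the local statement: if $A$ is an Artinian Gorenstein local ring and $N\neq 0$ is a stable $A$-module, then $\Tor^A_1(N,\lambda_A N)\neq 0$. You call this ``the key local lemma'' but justify it only by saying that Matlis duality identifies $\Tor^A_1(N,\lambda_A N)$ with ``a term of the form $D\,\widehat{\Ext}{}^{\bullet}_A(N,N)$'' which is ``nonzero for every nonzero stable $A$-module''. As written this is not a proof, and the unspecified degree hides the actual content: Tate self-extensions of a nonfree module can vanish in a given positive degree, so the nonvanishing is automatic only in degree $0$ (the stable endomorphism ring). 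To close the gap you must pin the degree down, e.g.\ $\Hom_A(\Tor^A_1(N,\lambda_A N),A)\cong\Ext^2_A(\Tr N,\Hom_A(N,A))$ and $\Tr N\underset{st}\cong\Omega^{-2}\Hom_A(N,A)$ over the self-injective ring $A$, which identifies the dual with the stable endomorphism module of $\Hom_A(N,A)$, nonzero since $N$ is nonfree; or, more simply, argue as the paper's Lemma \ref{P1-1} does: $\Hom_A(\Tor^A_1(N,\lambda_A N),A)\cong\Ext^1_A(N,(\lambda_A N)^*)\cong\Ext^1_A(N,\Omega N)$ (free summands in the second argument contribute nothing because $A$ is self-injective), and vanishing of the latter splits $0\to\Omega N\to F\to N\to 0$, forcing $N$ free. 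This lemma is precisely where the generically Gorenstein hypothesis enters and is exactly the content of the paper's Lemma \ref{P1-1} and Proposition \ref{CR}; once it is supplied, your (iii)$\Rightarrow$(i), and hence the whole proof, is complete.
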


Our next aim in this paper is to study the properties of a local ring $(R,\fm,k)$ by looking at linkage of syzygy modules of $k$.
Dutta \cite{D} showed that if some syzygy module of the residue field $k$ has a non-zero direct summand of finite projective dimension, then $R$ is regular. Motivated by Dutta's Theorem, a natural question is what properties does the ring $R$ have if an $R$-module linked to a syzygy of $k$, has a non-zero direct summand of finite projective dimension? We prove the following result in section 5:

\begin{thm} \label{Int4} Let $(R,\fm,k)$ be a local ring and let $n\geq 0$ be an integer. Let $X$ be a non-zero direct summand of $\lambda\Omega^nk$. Then the following statements are equivalent.
\begin{enumerate}[\rm(i)]
\item $\pd(X) < \infty$.
\item $\pd(X)= n$.
\item $\depth R > n$.
\end{enumerate}
\end{thm}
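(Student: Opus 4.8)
The plan is to prove the cycle $(\mathrm{ii})\Rightarrow(\mathrm{i})\Rightarrow(\mathrm{iii})\Rightarrow(\mathrm{ii})$ together with the (easy) implication $(\mathrm{ii})\Rightarrow(\mathrm{iii})$; $(\mathrm{ii})\Rightarrow(\mathrm{i})$ is trivial. Two preliminary identities carry most of the load. Write $F_\bullet$ for the minimal free resolution of $k$. For $n\ge1$ the module $\Omega^n k$ is torsionless (a submodule of $F_{n-1}$) and, assuming $R$ is not regular, stable by Dutta's theorem (the regular case is disposed of at the end); its minimal presentation is $F_{n+1}\xrightarrow{d_{n+1}}F_n\to\Omega^n k\to0$, so $\lambda\Omega^n k=\Omega\Tr\Omega^n k=\operatorname{im}d_{n+1}^{*}$ and, since $\ker d_{n+1}^{*}=(\Omega^n k)^{*}\subseteq\fm F_n^{*}$, there is a minimal short exact sequence $0\to(\Omega^n k)^{*}\to F_n^{*}\to\lambda\Omega^n k\to0$; hence $\Omega\,\lambda\Omega^n k\cong(\Omega^n k)^{*}$. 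Dualizing this sequence and using that $\Omega^n k$ is reflexive (a second syzygy for $n\ge2$, torsionless for $n=1$), the map $F_n\to(\Omega^n k)^{**}$ is the surjective cover, so $\Ext^1_R(\lambda\Omega^n k,R)=0$ and $(\lambda\Omega^n k)^{*}\cong\ker(F_n\to\Omega^n k)=\Omega^{n+1}k$. Finally, when $\depth R>0$ a direct computation from the presentation of $k$ gives $\lambda k\cong R$.

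For $(\mathrm{ii})\Rightarrow(\mathrm{iii})$: if $n\ge1$ then $\depth R=0$ would force $\pd X=0$ by Auslander--Buchsbaum; so $\depth R\ge1$, whence $\lambda\Omega^n k$, being torsionless, has no associated prime equal to $\fm$, so $\depth_R X\ge1$ and $\depth R=\pd X+\depth X\ge n+1$. (If $n=0$ and $\depth R=0$, then $\lambda k$ is stable, so $\pd X=0$ gives $X=0$, a contradiction; hence $\depth R>0$.) For $(\mathrm{iii})\Rightarrow(\mathrm{ii})$: from $\depth R>n$ we get $\Ext^i_R(k,R)=0$ for $0\le i\le n$, so dualizing the truncation $0\to\Omega^n k\to F_{n-1}\to\cdots\to F_0\to k\to0$ of $F_\bullet$ (note $k^{*}=0$) yields an exact and visibly minimal complex $0\to F_0^{*}\to\cdots\to F_{n-1}^{*}\to(\Omega^n k)^{*}\to0$. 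Splicing with the sequence above produces the minimal free resolution $0\to F_0^{*}\to\cdots\to F_n^{*}\to\lambda\Omega^n k\to0$, so $\pd_R\lambda\Omega^n k=n$; dualizing it recovers a truncation of $F_\bullet$, from which one reads off $\Ext^i_R(\lambda\Omega^n k,R)=0$ for $1\le i\le n-1$ and $\Ext^n_R(\lambda\Omega^n k,R)\cong k$. Now if $\lambda\Omega^n k=X\oplus Y$ with $X\ne0$, then $\pd X,\pd Y\le n$ (minimal resolutions are additive over direct sums); if $\pd Y=m$ with $1\le m\le n-1$ then $\Ext^m_R(Y,R)$ is a nonzero direct summand of $\Ext^m_R(\lambda\Omega^n k,R)=0$, and if $\pd Y=0$ then $Y$ is free, contradicting stability of $\lambda\Omega^n k$; so $\pd Y=n$, and symmetrically $\pd X=n$. (When $n=0$, $\depth R>0$ gives $\lambda k\cong R$, so every nonzero summand is free, of $\pd=0=n$.)

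For $(\mathrm{i})\Rightarrow(\mathrm{iii})$ — the main obstacle — suppose $X\ne0$ is a summand of $\lambda\Omega^n k$ with $\pd_R X<\infty$. If $R$ is regular, then $\lambda\Omega^n k\ne0$ forces $\Omega^n k$ non-free, i.e.\ $n<\pd_R k=\dim R=\depth R$, and we are done. So assume $R$ is not regular and, for contradiction, $\depth R\le n$. For $n\ge1$, dualizing $X\mid\lambda\Omega^n k$ and using $(\lambda\Omega^n k)^{*}\cong\Omega^{n+1}k$ shows $X^{*}$ is a direct summand of $\Omega^{n+1}k$, nonzero because $\lambda\Omega^n k$ is torsionless; the key point is that $\pd_R X^{*}<\infty$, and this is the step that genuinely needs care — it should come from dualizing the finite minimal free resolution of the torsionless, finite projective dimension module $X$ and verifying that the resulting complex of free modules is acyclic, hence resolves $X^{*}$. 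Granting this, $\Omega^{n+1}k$ has a nonzero direct summand of finite projective dimension, so Dutta's theorem forces $R$ to be regular, a contradiction. (The case $n=0$ is handled as before via $\lambda k$.) Thus the crux of the whole argument is transporting the finite projective dimension summand through the linkage — equivalently, controlling $X^{*}$ — after which Dutta's theorem closes everything; the remaining implications are routine bookkeeping with the two syzygy identities.
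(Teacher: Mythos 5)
Your implications (ii)$\Rightarrow$(i), (ii)$\Rightarrow$(iii) and (iii)$\Rightarrow$(ii) are essentially sound: the computation $\pd((\Omega^nk)^*)\le n-1$ from $\depth R>n$ and the splice giving $\pd(\lambda\Omega^nk)=n$ is exactly what the paper does in its Corollary 5.4, and your way of pinning $\pd X=n$ (via $\Ext^i_R(\lambda\Omega^nk,R)=0$ for $1\le i\le n-1$ together with stability of $\lambda\Omega^nk$) is an acceptable variant of the paper's appeal to $\Tor^R_n(k,X)\neq 0$. The problem is (i)$\Rightarrow$(iii), which is where all the difficulty of the theorem lives, and there your argument has a genuine gap at exactly the point you flag: you need $\pd_R(X^*)<\infty$ in order to produce a nonzero finite-projective-dimension summand of $\Omega^{n+1}k$ and invoke Dutta's theorem, and the justification you propose cannot work. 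Dualizing a finite minimal free resolution $0\to F_p\to\cdots\to F_0\to X\to 0$ produces a complex whose cohomology is $\Ext^i_R(X,R)$, and if $0<p=\pd(X)<\infty$ then $\Ext^p_R(X,R)\neq 0$; so the dualized complex is acyclic only when $X$ is free, i.e.\ only in the trivial case. Even if it were exact, it would be a coresolution $0\to X^*\to F_0^*\to\cdots\to F_p^*\to 0$, and exactness of such a complex forces $X^*$ to be free, so nothing useful can come out of that route. More fundamentally, ``torsionless of finite projective dimension'' does not pass to duals in general: $X^*$ sits as a second syzygy of modules built from $\Ext^{\ast}_R(X,R)$, over which one has no projective-dimension control. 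So the crucial implication is not proved, and the missing step is not a routine verification but the actual content of the theorem.

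For comparison, the paper never dualizes $X$. Its Theorem 5.3 shows that any nonzero direct summand $X$ of $\lambda\Omega^nM$ satisfies $\Tor^R_n(M,X)\neq 0$; this is obtained from the exact sequence $0\to\lambda\Omega^nM\to F\to\Tr\Omega^nM\to 0$, a splitting lemma of Takahashi, the Auslander--Bridger four-term exact sequence giving $\Ext^1_R(\Tr\Omega^nM,X)\hookrightarrow\Tor^R_n(M,X)$, and the cancellation theorem. It then proves, by induction on $\depth M$ using the Auslander--Buchsbaum formula and reduction modulo a regular element, that $\pd(X)<\infty$ forces $n\le\pd(X)<\depth(R)-\depth(M)$; applied to $M=k$ this gives $\depth R>n$ directly, with no appeal to Dutta. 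If you wish to keep your outline, you must replace the ``$\pd_R(X^*)<\infty$'' step by an input of this strength (for instance the Tor non-vanishing), since under your contradiction hypothesis there is no way to certify finiteness of $\pd_R(X^*)$.
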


Also, we show that a similar assertion holds if a module horizontally linked to $\Omega^nk$ has finite complete intersection dimension or finite Gorenstein dimensions.

\section{Preliminaries}

In the present section we recall some preliminary results from the literature that we need for the rest of the paper.

Throughout $R$ is a commutative Noetherian local ring and all modules over $R$ are assumed to be finitely generated.
For a finitely generated $R$--module $M$, let $F_1\rightarrow F_0\rightarrow M\rightarrow 0$
be a minimal free presentation of $M$. Then by applying $(-)^*=\Hom_R(-,R)$, there exists an exact sequence
\begin{equation}\tag{2.1}\label{eq1}
0\longrightarrow M^*\rightarrow F_0^*\overset{f^*}\rightarrow F_1^*\rightarrow \Tr M\rightarrow 0.
\end{equation}
If $M$ is stable (i.e. has no free direct summand) then $\Tr\Tr M\cong M$, and \ref{eq1} gives a minimal presentation of $\Tr M$
(see \cite[Theorem 32.13]{AF}). As in \cite{MS}, for a non-zero $R$-module $M$ we will set $\lambda M $ to be the first syzygy module of $\Tr M$ i.e. $\lambda M= \Omega\Tr M$. Hence $M$ is horizontally linked to $\lambda M$ if and only if $M \cong\lambda^2 M$. In this case, we briefly say $M$ is horizontally linked. Also, if an ideal $I$ is linked (geometrically linked) to $(0:_RI)$, we just say $I$ is linked (geometrically linked) ideal of $R$. Note that $M$ is a free $R$-module if and only if $\lambda M=0$.

In the following, we recall some basic properties of linkage of modules that we need for the rest of paper.

\begin{prop}\label{MS3}\cite[Proposition 3]{MS} Let $R$ be a local ring and let $M$ be a horizontally linked $R$-module.
Then $M$ and $\lambda M$ are stable $R$-modules.
\end{prop}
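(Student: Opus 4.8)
The plan is to deduce the statement from one claim: \emph{if $N$ is a torsionless $R$-module — i.e.\ the biduality map $N\to N^{**}$ is injective, equivalently $N$ embeds into a free module — then $\lambda N$ is stable.}

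Granting the claim, the proposition is purely formal. First, $\lambda M=\Omega\Tr M$ is, by its definition via \ref{eq1}, the image of $f^{*}$ inside the free module $F_1^{*}$, hence a submodule of a free module and so torsionless; applying the claim to $N=\lambda M$ gives that $\lambda^{2}M=\lambda(\lambda M)$ is stable, and since $M\cong\lambda^{2}M$ by hypothesis, $M$ is stable. Second, the very same isomorphism shows $M\cong\lambda^{2}M=\Omega\Tr(\lambda M)$ is itself a submodule of a free module, hence torsionless; applying the claim now to $N=M$ gives that $\lambda M$ is stable.

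For the claim I would argue with homomorphisms into $R$ (trace ideals). Fix a minimal free presentation $F_1\xrightarrow{f}F_0\xrightarrow{\varepsilon}N\to0$, so $\Omega N=\ker\varepsilon=\im f\subseteq\fm F_0$, and by \ref{eq1} we have $\lambda N\cong F_0^{*}/N^{*}$ with $N^{*}$ sitting in $F_0^{*}$ as $\{\eta\circ\varepsilon:\eta\in\Hom_R(N,R)\}$. A homomorphism $\lambda N\to R$ is the same thing as a homomorphism $F_0^{*}\to R$ vanishing on $N^{*}$; identifying $\Hom_R(F_0^{*},R)$ with $F_0$ by evaluation, it corresponds to an element $y\in F_0$ with $\eta(\varepsilon(y))=0$ for every $\eta\in\Hom_R(N,R)$, i.e.\ with $\varepsilon(y)$ in the kernel of $N\to N^{**}$. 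Since $N$ is torsionless this forces $\varepsilon(y)=0$, i.e.\ $y\in\Omega N\subseteq\fm F_0$; hence the value of the corresponding map $\lambda N\to R$ on an arbitrary element, represented by some $\xi\in F_0^{*}$, is $\xi(y)\in\fm$. Therefore every homomorphism $\lambda N\to R$ has image in $\fm$, so $\lambda N$ admits no surjection onto $R$ and thus no free direct summand; that is, $\lambda N$ is stable.

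I expect the only real obstacle to be the bookkeeping in that last step: checking, for torsionless $N$, that $\Hom_R(\lambda N,R)$ is identified under the evaluation isomorphism $F_0\cong\Hom_R(F_0^{*},R)$ with the submodule $\Omega N$ of $F_0$, and that the pairing it induces on $\lambda N$ then lands in $\fm$ precisely because $\Omega N\subseteq\fm F_0$. (Equivalently, one can phrase this homologically: torsionlessness of $N$ is the vanishing of $\Ext^{1}_R(\Tr N,R)$, and that vanishing says every homomorphism $\Omega\Tr N\to R$ extends over the minimal free cover of $\Tr N$, which forces its image into $\fm$ because $\Omega\Tr N$ lies in the radical of that cover.) Everything else — that $\Omega\Tr$ of any module is torsionless, and the two-line reduction using $M\cong\lambda^{2}M$ — is routine.
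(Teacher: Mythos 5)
Your argument is correct. Note that the paper itself gives no proof of this proposition --- it is quoted verbatim from \cite[Proposition 3]{MS} as a preliminary --- so there is no in-paper argument to compare with; what you have written is a sound self-contained proof. The key claim, that $\lambda N$ is stable whenever $N$ is torsionless, is exactly right, and your verification goes through: with $F_1\xrightarrow{f}F_0\xrightarrow{\varepsilon}N\to 0$ minimal one has $\lambda N\cong F_0^*/\varepsilon^*(N^*)$ (the cover $F_1^*\to\Tr N$ is minimal because $f^*$ has entries in $\fm$), the evaluation isomorphism $F_0\cong\Hom_R(F_0^*,R)$ identifies $\Hom_R(\lambda N,R)$ with $\varepsilon^{-1}\bigl(\ker(N\to N^{**})\bigr)=\ker\varepsilon=\Omega N\subseteq\fm F_0$, and hence every map $\lambda N\to R$ has image in $\fm$, so $\lambda N$ has no free summand. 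The two-line formal reduction using $M\cong\lambda^2M$ and the fact that any first syzygy is a submodule of a free module (hence torsionless) is also fine. Your parenthetical homological rephrasing is in fact the route most closely aligned with the machinery the paper does quote: torsionlessness of $N$ is equivalent to $\Ext^1_R(\Tr N,R)=0$ (Proposition \ref{ExtTr} together with the standard identification of syzygy modules with torsionless modules), and then dualizing $0\to\lambda N\to F_1^*\to\Tr N\to 0$ shows every map $\lambda N\to R$ extends to $F_1^*$ and so lands in $\fm$ since $\lambda N\subseteq\fm F_1^*$; either formulation is acceptable, and both recover \cite[Proposition 3]{MS}.
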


Let $R$ be a ring. An $R$-module $M$ is called a syzygy module if there exists an injective $R$-homomorphism $\iota : M \rightarrow F$, where $F$ is a free $R$-module.

\begin{prop}\label{ExtTr} Let $R$ be a ring and let $M$ be an $R$-module. Then $M$ is a syzygy module if and only if $\Ext^1_R(\Tr M, R)=0$ (see \cite[Theorem 2.17]{AB}).
\end{prop}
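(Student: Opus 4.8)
The plan is to extract this from the defining exact sequence \ref{eq1} and a dimension-shifting/duality argument relating $M$, $\Tr M$, and the double dual. Starting from the minimal free presentation $F_1\xrightarrow{f}F_0\to M\to 0$, apply $(-)^*$ to get \ref{eq1}, namely
\[
0\longrightarrow M^*\longrightarrow F_0^*\overset{f^*}\longrightarrow F_1^*\longrightarrow \Tr M\longrightarrow 0.
\]
Breaking this into two short exact sequences via the image $C=\im f^*=\Omega\Tr M$, one gets $0\to M^*\to F_0^*\to C\to 0$ and $0\to C\to F_1^*\to \Tr M\to 0$. Now apply $(-)^*$ to these. From the second short exact sequence one obtains the exact sequence
\[
0\longrightarrow (\Tr M)^*\longrightarrow F_1^{**}\longrightarrow C^*\longrightarrow \Ext^1_R(\Tr M,R)\longrightarrow 0,
\]
identifying $F_1^{**}\cong F_1$, and from the first short exact sequence $0\to C^*\to F_0^{**}\to M^{**}\to \Ext^1_R(C,R)\to\cdots$, together with the canonical biduality map. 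The upshot (this is essentially \cite[Theorem 32.13]{AF} / the Auslander--Bridger analysis) is an exact sequence
\[
0\longrightarrow \Ext^1_R(\Tr M,R)\longrightarrow M\overset{\varepsilon_M}\longrightarrow M^{**}\longrightarrow \Ext^2_R(\Tr M,R)\longrightarrow 0,
\]
where $\varepsilon_M$ is the canonical map. Thus $\ker\varepsilon_M\cong\Ext^1_R(\Tr M,R)$.

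With this exact sequence in hand, the equivalence follows. If $\Ext^1_R(\Tr M,R)=0$, then $\varepsilon_M$ is injective, so $M$ embeds in $M^{**}$; but $M^{**}$ is a submodule of a free module (being the dual of a module, it sits inside $F_0^{**}\cong F_0$ via the dual of $F_0^*\twoheadrightarrow M^*$ — more carefully, $M^{**}\hookrightarrow F_1^*{}^{*}$... ), so composing gives an embedding of $M$ into a free module, hence $M$ is a syzygy module. Conversely, suppose $M$ is a syzygy module, say $\iota:M\hookrightarrow F$ with $F$ free. Any element of $\ker\varepsilon_M$ is killed by every $R$-linear functional $M\to R$; but restricting the coordinate functionals of $F$ along $\iota$ shows the only such element is $0$ (if $x\in M$ maps to $0$ in $M^{**}$ then every $\varphi\in M^*$ satisfies $\varphi(x)=0$; applying the functionals $\pi_j\circ\iota$ where $\pi_j$ are the coordinate projections of $F$ forces all coordinates of $\iota(x)$ to vanish, so $x=0$). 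Hence $\ker\varepsilon_M=0$, i.e. $\Ext^1_R(\Tr M,R)=0$.

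The main point to handle carefully is the construction and naturality of the four-term exact sequence $0\to\Ext^1_R(\Tr M,R)\to M\to M^{**}\to\Ext^2_R(\Tr M,R)\to 0$ from the split/non-split pieces of \ref{eq1}; this is standard (it is exactly the content of \cite[Theorem 2.17]{AB} together with \cite[Theorem 32.13]{AF}), and once the identification $\ker\varepsilon_M\cong\Ext^1_R(\Tr M,R)$ is established, the rest is the elementary observation that ``$M$ embeds in a free module'' is equivalent to ``the biduality map $\varepsilon_M$ is injective.'' One small subtlety worth a remark: if $M$ has a free direct summand, \ref{eq1} need not be a minimal presentation of $\Tr M$, but the four-term sequence and the conclusion are unaffected, since passing to the stable part changes neither $\Ext^1_R(\Tr M,R)$ nor the property of being a syzygy module (a free module is trivially a syzygy module). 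I would simply cite \cite[Theorem 2.17]{AB} for the crux and include the short biduality argument for completeness.
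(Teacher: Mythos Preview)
The paper does not actually prove this proposition; it simply states it with a citation to \cite[Theorem 2.17]{AB}. Your argument is precisely the standard Auslander--Bridger approach behind that citation: construct the four-term exact sequence $0\to\Ext^1_R(\Tr M,R)\to M\xrightarrow{\varepsilon_M} M^{**}\to\Ext^2_R(\Tr M,R)\to 0$ and observe that ``$M$ is a syzygy module'' is equivalent to ``$\varepsilon_M$ is injective.'' So your proposal is correct and aligned with the intended source.

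One small point worth tightening: your parenthetical explanation of why $M^{**}$ embeds in a free module is a bit garbled. The clean statement is that the dual of any finitely generated module is torsionless: if $G\twoheadrightarrow M^*$ is any surjection from a free module, then dualizing gives $M^{**}\hookrightarrow G^*$. You don't need to chase through the particular free modules $F_0,F_1$ from the presentation of $M$ for this step.
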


\begin{thm}\label{MS2}\cite[Theorem 2]{MS} Let $R$ be a local ring and let $M$ be an $R$-module. Then the following statements are equivalent.
\begin{enumerate}[\rm(i)]
\item $M$ is horizontally linked.
\item $M$ is stable and \emph{$\Ext^1_R(\Tr M,R) = 0$}.
\item $M$ is stable and a syzygy module.
\end{enumerate}
\end{thm}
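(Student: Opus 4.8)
The plan is to establish (i)$\Rightarrow$(iii), the equivalence (ii)$\Leftrightarrow$(iii), and (iii)$\Rightarrow$(i), which together give all three conditions equivalent. The equivalence (ii)$\Leftrightarrow$(iii) is immediate from Proposition \ref{ExtTr}, which says $\Ext^1_R(\Tr M,R)=0$ precisely when $M$ is a syzygy module: conditions (ii) and (iii) merely trade one of these for the other and both require $M$ to be stable. For (i)$\Rightarrow$(iii): if $M$ is horizontally linked then $M$ is stable by Proposition \ref{MS3}, and $M\cong\lambda^2M=\Omega(\Tr\lambda M)$ is, by the very construction of the syzygy functor, a submodule of a free module, hence a syzygy module.

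The content lies in (iii)$\Rightarrow$(i). Assume $M$ is stable and a syzygy module, and fix a minimal free presentation $F_1\xrightarrow{\,f\,}F_0\xrightarrow{\,\rho\,}M\to0$. Since $M$ is stable, \eqref{eq1} is a minimal presentation of $\Tr M$; breaking it into its two short exact pieces shows that $F_0^*\to\lambda M$ (the corestriction of $f^*$) is a minimal free cover of $\lambda M=\Omega\Tr M$ with kernel $\rho^*(M^*)$, so that $\Omega\lambda M\cong M^*$. Picking a minimal free cover $q\colon H\twoheadrightarrow M^*$ and setting $h=\rho^*\circ q\colon H\to F_0^*$, we obtain a minimal presentation $H\xrightarrow{\,h\,}F_0^*\to\lambda M\to0$, whence $\Tr\lambda M=\coker\big(h^*\colon F_0^{**}\to H^*\big)$ and $\lambda^2M=\Omega\,\Tr\lambda M$. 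Now dualize: $h^*=q^*\circ\rho^{**}$, and, identifying $F_0^{**}$ with $F_0$, naturality of the biduality map gives $\rho^{**}=\sigma_M\circ\rho$ (with $\sigma_M\colon M\to M^{**}$ the canonical map), while $q^*$ is injective because $q$ is surjective. Hence $\im h^*=q^*(\sigma_M(M))\cong\sigma_M(M)$; and since $M$ is a syzygy module it is torsionless, so $\sigma_M$ is injective and $\im h^*\cong M$. On the other hand $\im h^*=\ker\big(H^*\to\Tr\lambda M\big)$, and comparing this a priori non-minimal presentation of $\Tr\lambda M$ with the minimal one — absorbing the excess of an arbitrary free cover over the minimal one into a free summand — gives $\im h^*\cong\lambda^2M\oplus R^a$ for some $a\ge0$. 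Therefore $M\cong\lambda^2M\oplus R^a$, and stability of $M$ forces $a=0$, so $M\cong\lambda^2M$; i.e. $M$ is horizontally linked.

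I expect the main obstacle to be the bookkeeping in (iii)$\Rightarrow$(i), and in particular the fact that $\lambda M$ need not be stable: then \eqref{eq1} applied to $\lambda M$ need not be minimal, so $\lambda^2M=\Omega\Tr\lambda M$ is recovered from $\im h^*$ only up to a free summand — which is exactly the place where stability of $M$ is used. One must also keep careful track, via the identity $\rho^{**}=\sigma_M\circ\rho$ and the injectivity of $q^*$, that $\lambda^2M$ is the image of $\sigma_M$; equivalently, the argument amounts to producing the Auslander--Bridger exact sequence $0\to\Ext^1_R(\Tr M,R)\to M\to\lambda^2M\to0$ for stable $M$ and combining it with Proposition \ref{ExtTr}.
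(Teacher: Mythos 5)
This theorem is not proved in the paper at all: it is quoted verbatim from \cite[Theorem 2]{MS} as a preliminary, so there is no in-paper argument to compare with. Your proof is correct and essentially reconstructs the Martsinkovsky--Strooker/Auslander--Bridger argument. The reductions (i)$\Rightarrow$(iii) via Proposition \ref{MS3} and (ii)$\Leftrightarrow$(iii) via Proposition \ref{ExtTr} are exactly as they should be, and your (iii)$\Rightarrow$(i) is sound: since $M$ is stable, the dualized sequence \eqref{eq1} is a minimal presentation of $\Tr M$ (the fact the paper cites from \cite[Theorem 32.13]{AF}), which is precisely what guarantees both that $\lambda M=\mbox{Im}(f^*)$ and that $F_0^*\twoheadrightarrow\lambda M$ is a minimal cover with kernel $M^*$ --- equivalently $M^*\subseteq\fm F_0^*$, which you could also justify in one line: a functional in $M^*$ landing outside $\fm F_0^*$ gives a split surjection $M\to R$, contradicting stability. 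You correctly identify and handle the real subtlety, namely that $\lambda M$ need not be stable, so dualizing its minimal presentation recovers $\lambda^2M$ only up to a free summand $R^a$; combined with $\mbox{Im}(h^*)\cong\sigma_M(M)\cong M$ (injectivity of $\sigma_M$ coming from the syzygy hypothesis), stability of $M$ kills $R^a$ and yields $M\cong\lambda^2M$. A small remark: the free summand can also be excluded at the source, since stability of $M$ forces $q^*(\sigma_M(M))\subseteq\fm H^*$, so the cover $H^*\to\Tr\lambda M$ is already minimal; this is what makes your closing aside about the exact sequence $0\to\Ext^1_R(\Tr M,R)\to M\to\lambda^2M\to0$ for stable $M$ literally correct rather than only correct up to a free summand.
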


\begin{prop}\label{MS4}\cite[Theorem 3.5]{EG} Let $R$ be a generically Gorenstein local ring, i.e., $R_{\fp}$ is Gorenstein ring, for all $\fp\in \Ass R$.
An $R$-module $M$ is a syzygy module if and only if $\Ass_R(M) �\subseteq \Ass(R)$.
\end{prop}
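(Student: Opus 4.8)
The ``only if'' direction needs no hypothesis on $R$: if $M$ embeds in a free module $F$, then $\Ass_R(M)\subseteq\Ass_R(F)=\Ass(R)$, since associated primes commute with direct sums. So the real content is the converse, and the plan is to recast the syzygy property as torsionlessness and then argue by localization at associated primes. Since $M$ is finitely generated, it embeds in some free module if and only if it embeds in a finitely generated free module, if and only if the biduality map $\varepsilon_M\colon M\to M^{\ast\ast}$ is injective: given $\varepsilon_M$ injective, dualize a surjection $R^n\twoheadrightarrow M^{\ast}$ to embed $M^{\ast\ast}$ into $R^n$ and compose; the converse follows from naturality of $\varepsilon$ and the fact that $\varepsilon_{R^n}$ is bijective. (Equivalently, by Proposition~\ref{ExtTr} one may use the Auslander--Bridger exact sequence $0\to\Ext^1_R(\Tr M,R)\to M\xrightarrow{\varepsilon_M}M^{\ast\ast}$.) Hence it suffices to prove: if $\Ass_R(M)\subseteq\Ass(R)$, then $T:=\ker\varepsilon_M$ is zero.

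Being a submodule of $M$, $T$ satisfies $\Ass_R(T)\subseteq\Ass_R(M)\subseteq\Ass(R)$. Suppose $T\neq 0$ and choose $\fp\in\Ass_R(T)\subseteq\Ass(R)$. Then $\depth R_\fp=0$, and $R_\fp$ is Gorenstein by hypothesis, so $R_\fp$ is Artinian Gorenstein and hence self-injective; consequently every finitely generated $R_\fp$-module embeds in a finitely generated free $R_\fp$-module (its injective hull is a finite direct sum of copies of $E(R_\fp/\fp R_\fp)\cong R_\fp$). In particular $M_\fp$ is torsionless over $R_\fp$, i.e.\ $\varepsilon_{M_\fp}$ is injective. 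Since $\Hom$ into $R$ and the biduality map commute with localization for finitely generated modules, $(\varepsilon_M)_\fp=\varepsilon_{M_\fp}$, so $T_\fp=\ker(\varepsilon_{M_\fp})=0$, contradicting $\fp\in\Ass_R(T)$. Therefore $\Ass_R(T)=\emptyset$, whence $T=0$ and $M$ is a syzygy module.

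I expect the argument to be routine once the reduction is in place; the only inputs beyond formalities are that a depth-zero Gorenstein local ring is self-injective (so that all its finitely generated modules are torsionless) and the stability of biduality under localization, both standard. The one point to keep in mind is that $\Ass(R)$ may contain embedded primes, but this is harmless: all that is used is that $\depth R_\fp=0$ for $\fp\in\Ass(R)$, which together with Gorensteinness forces $\dim R_\fp=0$.
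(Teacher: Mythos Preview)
Your proof is correct. The paper does not supply its own argument for this proposition; it is stated as a preliminary and attributed to Evans--Griffith \cite[Theorem~3.5]{EG}, so there is no in-paper proof to compare against. Your route---reducing the syzygy condition to injectivity of the biduality map $\varepsilon_M$, then localizing at a putative associated prime of $\ker\varepsilon_M$ and invoking that an Artinian Gorenstein local ring is self-injective---is the standard one and is essentially what one finds in Evans--Griffith. The remark at the end about embedded primes is apt: one only needs $\depth R_\fp=0$ for $\fp\in\Ass(R)$, not minimality of $\fp$.
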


Let $R$ be a local ring. Two $R$-modules $M$ and $N$ are called \emph{stably isomorphic} (denoted $M\underset{st}\cong N$) if there exist
free $R$-modules $F$ and $G$ such that $M\oplus F \cong N\oplus G$. The next proposition follows from the exact sequence (\ref{eq1}) and Theorem \ref{MS2}.

\begin{prop}\label{fund} Let $R$ be a local ring and $M$ be an $R$-module. Then
\begin{enumerate}[\rm(i)]
\item $\Omega\lambda M \underset{st}\cong M^{\ast}$, and if $M$ is horizontally linked, then $\Omega M \cong (\lambda M)^*$,
\item $(\lambda\Omega^nM)^*\underset{st}\cong \Omega^{n+1}M$ and $\lambda^2\Omega^n M \underset{st}\cong \Omega^n M$, for all $n\geq 1$.
\end{enumerate}
\end{prop}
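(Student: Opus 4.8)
The plan is to read everything off the four-term exact sequence \eqref{eq1} attached to a minimal free presentation $F_1\xrightarrow{f}F_0\to M\to 0$, together with Theorem \ref{MS2} and the standard fact (implicit in Proposition \ref{ExtTr}) that a module is a syzygy module if and only if its biduality map $\eta_M\colon M\to M^{\ast\ast}$ is injective; no deeper input is needed. First I would handle the first assertion of (i). Setting $C:=\operatorname{Im}f^{\ast}$, I break \eqref{eq1} into $0\to M^{\ast}\to F_0^{\ast}\to C\to 0$ and $0\to C\to F_1^{\ast}\to \Tr M\to 0$. The second sequence exhibits $C$ as a first syzygy of $\Tr M$, so $C\underset{st}\cong\Omega\Tr M=\lambda M$; the first then exhibits $M^{\ast}$ as a first syzygy of $C$, so $M^{\ast}\underset{st}\cong\Omega C\underset{st}\cong\Omega\lambda M$. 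No hypothesis on $M$ is used here.

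For the second assertion of (i), assume $M$ is horizontally linked. Then $M$ is stable (Proposition \ref{MS3}), so \eqref{eq1} is a \emph{minimal} presentation of $\Tr M$ and hence $\lambda M=\operatorname{Im}f^{\ast}$ on the nose, giving an honest exact sequence $0\to M^{\ast}\to F_0^{\ast}\to\lambda M\to 0$. I would dualize it to $0\to(\lambda M)^{\ast}\to F_0^{\ast\ast}\to M^{\ast\ast}$, observe that the inclusion $M^{\ast}\hookrightarrow F_0^{\ast}$ is $\Hom_R(\pi,R)$ for the free cover $\pi\colon F_0\to M$, and use naturality of biduality to identify the dual map $F_0^{\ast\ast}\to M^{\ast\ast}$ with $\eta_M\circ\pi$ under the canonical isomorphism $F_0\cong F_0^{\ast\ast}$. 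Thus $(\lambda M)^{\ast}\cong\ker(\eta_M\circ\pi)$; since a horizontally linked module is a syzygy module (Theorem \ref{MS2}), $\eta_M$ is injective, so this kernel equals $\ker\pi=\Omega M$.

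For (ii), fix $n\ge 1$ and write $\Omega^nM\cong Y\oplus R^{a}$ with $Y$ stable. Since $\Omega^nM$ is a syzygy module and a direct summand of a syzygy module is again one, $Y$ is a stable syzygy module, hence horizontally linked by Theorem \ref{MS2}. Because both $\Tr$ and $\lambda=\Omega\Tr$ kill free summands, $\lambda\Omega^nM=\lambda Y$ and $\lambda^2\Omega^nM=\lambda^2Y$. Applying the second assertion of (i) to $Y$ gives $(\lambda\Omega^nM)^{\ast}=(\lambda Y)^{\ast}\cong\Omega Y\underset{st}\cong\Omega(\Omega^nM)=\Omega^{n+1}M$; and $Y$ horizontally linked means $Y\cong\lambda^2Y=\lambda^2\Omega^nM$, so $\lambda^2\Omega^nM\underset{st}\cong\Omega^nM$.

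The whole argument is essentially formal, and I do not anticipate a genuine obstacle. The only points needing care are the bookkeeping of free direct summands — knowing when an isomorphism is honest versus only stable, and using that $\lambda$ ignores free summands and that summands of syzygy modules are syzygy modules — and the naturality diagram identifying the connecting map of the dualized presentation with $\eta_M$ composed with the free cover; once that identification is set up, everything else follows mechanically.
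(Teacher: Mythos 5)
Your argument is correct and is exactly the expansion the paper intends: the paper offers no detailed proof, saying only that the proposition ``follows from the exact sequence (\ref{eq1}) and Theorem \ref{MS2}'', and your breakdown of (\ref{eq1}) into two short exact sequences, the biduality/naturality identification of $(\lambda M)^*$ with $\ker(\eta_M\circ\pi)=\Omega M$ for horizontally linked $M$, and the reduction of (ii) to the stable part $Y$ of $\Omega^nM$ via Theorem \ref{MS2} is precisely that route. The only (harmless) bookkeeping left implicit is the degenerate case where $\Omega^nM$ is zero or free, which the stated conventions ($\lambda M=0$ iff $M$ free) dispose of immediately.
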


The following result describes an interesting relation between the set of associated primes of horizontally linked modules and associated primes of the base ring.

\begin{prop}\label{F1}\cite[Proposition 6]{MS} \emph{Let $R$ be an unmixed (i.e. $\Ass(R)$ consists of minimal prime ideals) local ring and let $M$ be a horizontally linked $R$-module.
Then $\Ass_R(M)\cup \Ass_R(\lambda M) = \Ass(R)$.}
\end{prop}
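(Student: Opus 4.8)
The plan is to prove the two inclusions $\Ass_R(M)\cup\Ass_R(\lambda M)\subseteq\Ass(R)$ and $\Ass(R)\subseteq\Ass_R(M)\cup\Ass_R(\lambda M)$ separately; the unmixedness hypothesis on $R$ will be needed only for the second one.

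For the inclusion into $\Ass(R)$, I would note that both $M$ and $\lambda M$ are syzygy modules: by construction $\lambda M=\Omega\Tr M$ is a submodule of a finitely generated free module, and since $M$ is horizontally linked we have $M\cong\lambda^2M=\Omega\Tr(\lambda M)$, which is likewise a submodule of a finitely generated free module (alternatively, quote Theorem \ref{MS2} for $M$). Consequently each of $M$ and $\lambda M$ embeds into a finitely generated free $R$-module $F$, and since $\Ass_R(F)=\Ass(R)$ for $F\neq0$ we obtain $\Ass_R(M)\subseteq\Ass(R)$ and $\Ass_R(\lambda M)\subseteq\Ass(R)$.

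The heart of the argument is the reverse inclusion, and the key step is to show that the two supports cover the spectrum, i.e. $\Supp_R(M)\cup\Supp_R(\lambda M)=\Spec R$. Fix a minimal free presentation $F_1\to F_0\to M\to0$ and consider the exact sequence $0\to\Omega M\to F_0\to M\to0$; since $M$ is horizontally linked, $\Omega M\cong(\lambda M)^*$ by Proposition \ref{fund}(i). Suppose toward a contradiction that some $\fp\in\Spec R$ satisfies $M_\fp=0$ and $(\lambda M)_\fp=0$. Because localization commutes with $\Hom_R(-,R)$ on finitely generated modules, $(\Omega M)_\fp\cong\bigl((\lambda M)_\fp\bigr)^{*}=0$; localizing the exact sequence above at $\fp$ then forces $(F_0)_\fp=0$, hence $F_0=0$, hence $M=0$, contradicting $M\neq0$. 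This proves $\Supp_R(M)\cup\Supp_R(\lambda M)=\Spec R$.

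Finally I would combine this with unmixedness. Let $\fp\in\Ass(R)$. Unmixedness makes $\fp$ a minimal prime of $R$, hence a minimal element of $\Spec R$; by the previous paragraph $\fp\in\Supp_R(M)$ or $\fp\in\Supp_R(\lambda M)$. In the first case $\fp$ is a minimal element of $\Supp_R(M)$, and a minimal prime of the support of a finitely generated module is an associated prime, so $\fp\in\Ass_R(M)$; the second case is identical with $\lambda M$ in place of $M$. This gives $\Ass(R)\subseteq\Ass_R(M)\cup\Ass_R(\lambda M)$, and together with the first inclusion yields the claimed equality. I do not anticipate a serious obstacle; the one point to be careful about is to use the genuine isomorphism $\Omega M\cong(\lambda M)^*$ of Proposition \ref{fund}(i) (valid precisely because $M$ is horizontally linked, hence stable) rather than a mere stable isomorphism, since the support argument needs to conclude $(F_0)_\fp=0$ on the nose.
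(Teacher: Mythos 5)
Your argument is correct. Note that the paper does not prove this statement at all: it is quoted verbatim from \cite[Proposition 6]{MS}, so there is no internal proof to compare against; what you have written is a valid self-contained proof of the cited result. Both inclusions check out: $M$ and $\lambda M$ are syzygy modules (for $M$ this uses horizontal linkage, $M\cong\Omega\Tr(\lambda M)$, or Theorem \ref{MS2}), giving $\Ass_R(M)\cup\Ass_R(\lambda M)\subseteq\Ass(R)$ without unmixedness; and your support-covering step via $0\to\Omega M\to F_0\to M\to 0$ together with the genuine isomorphism $\Omega M\cong(\lambda M)^*$ of Proposition \ref{fund}(i) (you are right that a merely stable isomorphism would not let you conclude $(F_0)_\fp=0$) correctly yields $\Supp_R(M)\cup\Supp_R(\lambda M)=\Spec R$, after which unmixedness and the fact that minimal primes of the support are associated primes finish the reverse inclusion. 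One cosmetic simplification: the covering of $\Spec R$ also falls out directly from the exact sequence $0\to M^*\to F_0^*\to\lambda M\to 0$ coming from (2.1), since $M_\fp=0$ forces $(M^*)_\fp=0$; this avoids invoking \ref{fund}(i), though your route is equally sound.
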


Next, we recall the definition of Gorenstein dimension which has been introduced by Auslander and Bridger \cite{AB}.

\begin{defn}
\emph{A finitely generated $R$-module $M$ is called \emph{totally reflexive} or of $\g$-\emph{dimension zero} if the natural homomorphism $M\to M^{\ast\ast}$ is an isomorphism and $\Ext_R^i(M,R)=0=\Ext_R^i(M^\ast,R)$ for all $i>0$.}
\end{defn}

We recall that the infimum of $n$ for which there exists an exact sequence $$0 \to X_{n} \to \dots  \to X_{0} \to M\to 0,$$
such that each $X_{i}$ is totally reflexive, is called the \emph{Gorenstein dimension} of $M$. If $M$ has Gorenstein dimension $n$, we write $\gd(M)=n$.
Therefore $M$ is totally reflexive if and only if $\gd(M)\leq 0$, where it follows by convention that $\gd(0)=-\infty$.


More generally, let $M$ and $K$ be $R$--modules, and set $M^\dagger=\Hom_R(M,K)$. The module $M$ is called \emph{$K$-reflexive} if the canonical map $M\rightarrow M^{\dagger\dagger}$ is bijective. The Gorenstein dimension has been extended to $\gk$dimension by Foxby in \cite{F} and by Golod in \cite{G}.

\begin{defn}
\emph{The module $M$ is said to have \emph{$\gk$dimension zero} if}
\begin{enumerate}[\rm(i)]
            \item{\emph{$M$ is $K$-reflexive,}}
             \item{\emph{$\Ext^i_R(M,K)=0$}, for all $i>0$,}
              \item{\emph{$\Ext^i_R(M^{\dagger},K)=0$}, for all $i>0$.}
\end{enumerate}
\end{defn}
A $\gk$resolution of a finite $R$--module $M$ is a right acyclic complex of modules of $\gk$dimension zero whose $0$th
homology module is $M$.  The module $M$ is said to have finite $\gk$dimension, denoted by $\gkd_R(M)$, if
it has a $\gk$resolution of finite length.

\begin{defn}
\emph{An $R$--module $K$ is called \emph{semidualizing} (or \emph{suitable}), if}
\begin{enumerate}[\rm(i)]
    \item\emph{the homothety morphism \emph{$R\rightarrow\Hom_R(K,K)$} is an isomorphism,}
    \item\emph{$\Ext^i_R(K,K)=0$} for all $i>0$.
\end{enumerate}
\end{defn}
Semidualizing modules are studied in \cite{F} and \cite{G}.
It is clear that $R$ itself is a semidualizing $R$--module. Also if $R$ is Cohen-Macaulay then its canonical module (if it exists)
is a semidualizing module. We recall the following definitions from \cite{G}.
\begin{defn}
\emph{An $R$--module $M$ is called \emph{$\gk$perfect} if $\grade_R(M)=\gkd_R(M)$. An ideal $I$ is called $\gk$perfect if $R/I$ is
$\gk$perfect $R$--module. An $R$--module $M$ is called \emph{$\gk$Gorenstein} if $M$ is $\gk$perfect
and $\Ext^n_R(M,K)$ is cyclic, where $n=\gkd_R(M)$. An ideal $I$ is called $\gk$Gorenstein if $R/I$ is $\gk$Gorenstein $R$--module.}
\end{defn}
Note that if $K$ is a semidualizing $R$--module and $I$ is a $\gk$Gorenstein ideal of $\gk$dimension $n$, then
$\Ext^n_R(R/I,K)\cong R/I$(see \cite[10]{G}).

\begin{lem}\label{G1} \cite[Corollary]{G}
Let $I$ be an ideal of $R$. Assume that $K$ is an $R$--module and that $n$ is a fixed integer. If \emph{$\Ext^j_R(R/I,K)=0$} for
all $j\neq n$ then there is an isomorphism of functors \emph{$\Ext^i_{R/I}(-,\Ext^n_R(R/I,K))
\cong\Ext^{n+i}_R(-,K)$} on the category of $R/I$--modules for all $i\geq0$.
\end{lem}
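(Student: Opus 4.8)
The plan is to prove this by a change-of-rings argument. Write $\bar{R}=R/I$ and $C=\Ext^n_R(\bar{R},K)$. The one fact the whole proof rests on is the adjunction isomorphism
\[
\Hom_R(M,X)\;\cong\;\Hom_{\bar{R}}\bigl(M,\Hom_R(\bar{R},X)\bigr)
\]
for every $\bar{R}$-module $M$ and every $R$-module $X$, natural in both variables (the Hom--tensor adjunction for $R\to\bar{R}$, together with $\bar{R}\otimes_{\bar{R}}M\cong M$), along with its consequence that $\Hom_R(\bar{R},-)$ carries injective $R$-modules to injective $\bar{R}$-modules, being right adjoint to the exact restriction functor.

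First I would fix an injective resolution $K\to E^{\bullet}$ over $R$ and form the complex $D^{\bullet}:=\Hom_R(\bar{R},E^{\bullet})$, a bounded-below complex of injective $\bar{R}$-modules with $H^{q}(D^{\bullet})=\Ext^{q}_R(\bar{R},K)$. By hypothesis its only nonvanishing cohomology is $C$, sitting in degree $n$. Then I would run the standard procedure of peeling off two-term contractible subcomplexes of injectives -- using repeatedly that the cokernel of a split injection of injective modules is again injective -- to conclude that $D^{\bullet}$ is \emph{homotopy equivalent}, as a complex of $\bar{R}$-modules, to the complex that places an injective $\bar{R}$-resolution $J^{\bullet}$ of $C$ in cohomological degrees $\ge n$.

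Next, for an arbitrary $\bar{R}$-module $M$, the adjunction above gives an equality of complexes $\Hom_R(M,E^{\bullet})=\Hom_{\bar{R}}(M,D^{\bullet})$, and since the additive functor $\Hom_{\bar{R}}(M,-)$ preserves homotopy equivalences, this complex is homotopy equivalent to $\Hom_{\bar{R}}(M,J^{\bullet})$ placed in degrees $\ge n$. Reading off cohomology in degree $n+i$ then gives
\[
\Ext^{\,n+i}_R(M,K)=H^{\,n+i}\Hom_R(M,E^{\bullet})\;\cong\;H^{\,i}\Hom_{\bar{R}}(M,J^{\bullet})=\Ext^{\,i}_{\bar{R}}(M,C);
\]
because $E^{\bullet}$, $D^{\bullet}$, $J^{\bullet}$ are chosen once and for all, independently of $M$, this isomorphism is natural in $M$, which is precisely the asserted isomorphism of functors on $\bar{R}$-modules. (One could equally package the same computation as the Grothendieck spectral sequence $E_2^{p,q}=\Ext^p_{\bar{R}}(M,\Ext^q_R(\bar{R},K))\Rightarrow\Ext^{p+q}_R(M,K)$ for the composite $\Hom_{\bar{R}}(M,-)\circ\Hom_R(\bar{R},-)$, which under the hypothesis collapses onto the row $q=n$.)

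I do not expect a genuine obstacle here: the argument is entirely formal. The only points that need care are the two supporting facts -- that $\Hom_R(\bar{R},-)$ sends injectives to injectives (equivalently, that the contractible-summand peeling is legitimate) and that the isomorphisms produced are natural in $M$, so that one really obtains an isomorphism of functors rather than a mere pointwise family of isomorphisms. Both are routine.
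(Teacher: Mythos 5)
Your proof is correct. Note that the paper gives no argument for this lemma at all---it is quoted from Golod \cite{G}---so there is no in-paper proof to compare against; your argument (peel contractible two-term subcomplexes off the complex $\Hom_R(R/I,E^{\bullet})$ of injective $R/I$-modules to see it is homotopy equivalent to a shifted injective $R/I$-resolution of $\Ext^n_R(R/I,K)$, then use the adjunction $\Hom_R(M,-)\cong\Hom_{R/I}(M,\Hom_R(R/I,-))$ for $R/I$-modules $M$) is the standard proof of this change-of-rings isomorphism, and is equivalent to the collapse onto the row $q=n$ of the Grothendieck spectral sequence $\Ext^p_{R/I}(M,\Ext^q_R(R/I,K))\Rightarrow\Ext^{p+q}_R(M,K)$ that you mention. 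The two points you flag---that $\Hom_R(R/I,-)$ preserves injectives, being right adjoint to the exact restriction functor, and that naturality in $M$ follows because the resolutions and the homotopy equivalence are fixed once and for all---are exactly the ones that need checking, and you handle both correctly.
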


\begin{thm}\label{G2}
\cite[Proposition 5]{G}. Let $I$ be a \emph{$\gk$}perfect ideal, and let K be a semidualizing
$R$--module. Set $C=\Ext^{\tiny{\grade(I)}}_R(R/I,K)$. Then the following statements hold.
\begin{enumerate}[\rm(i)]
        \item {$C$ is a semidualizing $R/I$--module.}
         \item {If $M$ is a $R/I$--module then \emph{$\gkd_R(M)<\infty$} if and only if \emph{$\gc_{R/I}(M)<\infty$}, and
         if these dimensions are finite then
\emph{$\gkd_R(M)=\grade(I)+\gc_{R/I}(M)$.}}
\end{enumerate}
\end{thm}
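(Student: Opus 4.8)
\textbf{Proof plan for Theorem \ref{G2}.}
The plan is to establish part (i) first and then deduce part (ii) from it together with Lemma \ref{G1} and Theorem \ref{G2}(i). Write $g = \grade(I)$ and $C = \Ext^g_R(R/I, K)$. Since $I$ is $\gk$perfect, we have $\gkd_R(R/I) = g$, so a $\gk$resolution of $R/I$ has length $g$; combining this with the rigidity of $\gk$dimension (the analogue of the Auslander--Buchsbaum formula for $\gk$dimension, or directly the vanishing built into the definition of $\gk$dimension) we get $\Ext^j_R(R/I, K) = 0$ for all $j \neq g$. This is the key vanishing that feeds Lemma \ref{G1}.

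First I would prove (i). Applying Lemma \ref{G1} with $n = g$ gives an isomorphism of functors $\Ext^i_{R/I}(-, C) \cong \Ext^{g+i}_R(-, K)$ on $R/I$-modules for all $i \geq 0$. Taking the module $R/I$ itself, this yields $\Ext^i_{R/I}(R/I, C) \cong \Ext^{g+i}_R(R/I, K)$, which is $0$ for $i > 0$ (by the vanishing above) and is $C$ for $i = 0$. The vanishing $\Ext^i_{R/I}(R/I, C) = 0$ for $i>0$ is automatic, so the real content is to identify $\Hom_{R/I}(C,C)$ with $R/I$. For this I would compute $\Hom_{R/I}(C,C)$; one route is to build a $\gk$resolution $G_\bullet \to R/I$ of length $g$, apply $\Hom_R(-,K)$ to get a complex whose only nonzero cohomology is $C$ in degree $g$ (so it is a ``coresolution'' of $C$ by modules of $\gk$dimension zero over $R$), and then use the reflexivity and Ext-vanishing properties of the modules of $\gk$dimension zero to show the double-dual-type identity. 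Concretely, dualizing twice recovers $R/I$, and chasing this through shows the homothety $R/I \to \Hom_{R/I}(C,C)$ is an isomorphism. Together with $\Ext^i_{R/I}(C,C) \cong \Ext^{g+i}_R(C, K)$ — here one must also check $\Ext^j_R(C,K) = 0$ for $j \neq g$, which follows because $C = \Ext^g_R(R/I,K)$ sits in the Auslander class / has finite $\gk$dimension by the defining properties of modules used in the resolution — we obtain $\Ext^i_{R/I}(C,C) = 0$ for $i > 0$, completing the proof that $C$ is semidualizing over $R/I$.

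For (ii), let $M$ be an $R/I$-module. The functorial isomorphism $\Ext^i_{R/I}(M, C) \cong \Ext^{g+i}_R(M, K)$ from Lemma \ref{G1}, applied now to $M$, is the main bridge: it says the $C$-Ext over $R/I$ and the $K$-Ext over $R$ agree after a degree shift by $g$. I would then invoke the standard characterization of finite $\gc$dimension and finite $\gk$dimension in terms of boundedness of the relevant Ext modules (or, more carefully, in terms of the existence of a finite resolution by modules of $\gc$dimension, resp.\ $\gk$dimension, zero, and the fact that such finiteness is detected by the vanishing of high Ext against the respective semidualizing module together with reflexivity). From the degree-shifted isomorphism, $\gkd_R(M) < \infty$ if and only if $\gc_{R/I}(M) < \infty$, and when finite, the top nonvanishing $C$-Ext over $R/I$ occurs in degree $\gc_{R/I}(M)$, which via the isomorphism corresponds to the top nonvanishing $K$-Ext over $R$ in degree $g + \gc_{R/I}(M)$; since the latter equals $\gkd_R(M)$, we get $\gkd_R(M) = g + \gc_{R/I}(M) = \grade(I) + \gc_{R/I}(M)$.

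The main obstacle I anticipate is the self-duality computation in part (i), namely showing $\Hom_{R/I}(C,C) \cong R/I$ via the homothety and $\Ext^{>0}_{R/I}(C,C) = 0$. This requires carefully tracking the reflexivity and Ext-vanishing properties of the modules of $\gk$dimension zero appearing in a $\gk$resolution of $R/I$ through the double application of $\Hom_R(-,K)$, and checking that $C$ itself has the requisite finiteness/reflexivity over $R$ so that Lemma \ref{G1} applies with $M = C$. Once that identification is in hand, part (ii) is a relatively formal consequence of the functorial Ext-isomorphism of Lemma \ref{G1} and the Ext-theoretic characterization of finite $\gk$- and $\gc$dimension.
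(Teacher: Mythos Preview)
The paper does not prove Theorem \ref{G2}; it is stated as a preliminary result and attributed to Golod \cite[Proposition 5]{G} without proof. So there is no ``paper's own proof'' to compare your proposal against.

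That said, your outline follows the standard route to this result and is broadly correct in spirit. The key input --- that $\Ext^j_R(R/I,K)=0$ for $j\neq g$ --- together with Lemma \ref{G1} gives the change-of-rings isomorphism $\Ext^i_{R/I}(-,C)\cong\Ext^{g+i}_R(-,K)$, and this is indeed the engine behind both parts. One caution for part (ii): finite $\gk$dimension is \emph{not} characterized solely by eventual vanishing of $\Ext^i_R(M,K)$; one also needs the $K$-reflexivity of a high enough syzygy (equivalently, the vanishing of $\Ext^i_R(\Hom_R(-,K),K)$ for the relevant modules). Your sketch gestures at this (``together with reflexivity''), but to make the argument go through you must check that the change-of-rings isomorphism also transports the reflexivity condition, i.e., that a module of $\gc$dimension zero over $R/I$ has $\gk$dimension zero over $R$ (and conversely for $R/I$-modules of $\gk$dimension $g$). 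This typically uses that a $\gk$resolution of $R/I$ of length $g$ can be tensored or spliced with a $\gc$resolution over $R/I$ to produce a $\gk$resolution over $R$. Without this step, the Ext-shift alone does not give the equivalence of finiteness nor the numerical formula.
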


\section{Linkage of modules and geometrically linked ideals}

The purpose of this section is to study the conditions under which the annihilator of a linked module is a geometrically linked ideal. We start with the following well-known proposition which gives a characterization of geometrically linked ideals, and motivation for the results of sections 3 and 4.

\begin{prop}\label{P1} Let $R$ be an unmixed ring, $I$ an ideal of grade zero and linked to an ideal J. Then the following
statements are equivalent.
\begin{enumerate}[\rm(i)]
\item $I$ is geometrically linked to $J$.
\item $\grade_R(I+J)>0$.
\item $I\cap J=0$.
\item $\Tor^R_1(R/I,R/J)=0$.
\item $\Ext^1_R(R/I,R/I)=0$.
\item $\Ext^1_R(R/J,R/J)=0$.
\end{enumerate}
\end{prop}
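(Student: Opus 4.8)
The plan is to prove this as a cycle of implications, leaning on the setup that $R$ is unmixed, $I$ has grade zero, and $I$ is linked to $J=(0:_RI)$ by the zero ideal (so $I=(0:_RJ)$ as well). Throughout I will use Proposition~\ref{F1}: since $R/I$ is horizontally linked to $R/J$, we have $\Ass_R(R/I)\cup\Ass_R(R/J)=\Ass(R)$, and all of these are minimal primes of $R$ because $R$ is unmixed. I would also use the standard fact that geometric linkage means $\Ass_R(R/I)\cap\Ass_R(R/J)=\emptyset$, which in the present unmixed setting is equivalent to saying that $\Ass_R(R/I)$ and $\Ass_R(R/J)$ partition $\Min(R)=\Ass(R)$.

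First I would prove (i)$\Leftrightarrow$(iii). If $I\cap J=0$, then for every $\fp\in\Ass(R)$ the localization gives $I_{\fp}\cap J_{\fp}=0$ in the Artinian local ring $R_{\fp}$; since $\fp$ is an associated (minimal) prime, $R_{\fp}$ has a unique minimal prime and two ideals whose intersection is zero cannot both be proper there, so one of $I_{\fp},J_{\fp}$ equals $R_{\fp}$, i.e. $\fp\notin\Ass_R(R/I)$ or $\fp\notin\Ass_R(R/J)$; hence $\Ass_R(R/I)\cap\Ass_R(R/J)=\emptyset$. Conversely, if the two sets of associated primes are disjoint, then $\Ass_R(I\cap J)\subseteq\Ass_R(R/I)\cap\Ass_R(R/J)$ — here I use that $I\cap J=(0:_RJ)\cap(0:_RI)$ embeds into $\Hom_R(R/I,R/J')$-type modules, more directly $I\cap J\hookrightarrow R/J$ (via $I\cap J=(0:_{I}J)\subseteq\ldots$) wait, cleaner: $I\cap J$ embeds in both $R/? $; the efficient route is $\Ass_R(I\cap J)\subseteq\Supp(R/I)\cap\Ass_R(?)$. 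Let me instead argue $I\cap J$ is annihilated appropriately: any $x\in I\cap J$ satisfies $x\in I=(0:_RJ)$ and $x\in J=(0:_RI)$, so $x^2\in IJ\subseteq I\cap (0:_RI)$, and in an unmixed ring with $\grade I=0$ one shows $I\cap(0:_RI)$ is nilpotent of index two hence zero if $R$ is reduced — but $R$ need not be reduced, so instead I use: $\Ass_R(I\cap J)\subseteq\Ass_R(R)$ by Proposition~\ref{MS4} applied to $I\cap J$ as a submodule of the free module... Actually $I\cap J\subseteq R$ is a syzygy module, so $\Ass_R(I\cap J)\subseteq\Ass(R)=\Ass_R(R/I)\cup\Ass_R(R/J)$; but also $I\cap J\subseteq I$ and $I$ kills $J=(0:_RI)\supseteq$ the primes in $\Ass_R(R/J)$... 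The key point I would nail down: $(I\cap J)_{\fp}=0$ for every $\fp\in\Ass(R)$ by the Artinian-local argument above, forcing $I\cap J=0$ since its support contains none of its associated primes unless it is zero.

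Next, (iii)$\Leftrightarrow$(ii): grade $(I+J)>0$ means $I+J$ contains a nonzerodivisor, equivalently $I+J\not\subseteq\fp$ for all $\fp\in\Ass(R)$; by Proposition~\ref{F1}, $\Ass(R)=\Ass_R(R/I)\cup\Ass_R(R/J)$, and $I+J\subseteq\fp$ forces $I\subseteq\fp$ and $J\subseteq\fp$, i.e. $\fp\in\Ass_R(R/I)\cap\Ass_R(R/J)$; so $\grade(I+J)>0$ iff the intersection of associated primes is empty, which we just saw is equivalent to (iii). For (iii)$\Leftrightarrow$(iv): there is an exact sequence $0\to I\cap J\to I\oplus J\to I+J\to 0$ and also, since $R$ is linked with $c=0$, the Tor computation $\Tor_1^R(R/I,R/J)$ fits in $0\to\Tor_1^R(R/I,R/J)\to I\otimes_R R/J$; more concretely tensoring $0\to J\to R\to R/J\to 0$ with $R/I$ gives $\Tor_1^R(R/I,R/J)\cong\ker(J/IJ\to R/I)=(I\cap J)/IJ$. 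Then I must check $IJ=0$: indeed $IJ\subseteq I\cdot(0:_RI)$ and $J\subseteq(0:_RI)$ gives $IJ=0$ directly since $J=(0:_RI)$ means $IJ=0$. Hence $\Tor_1^R(R/I,R/J)\cong I\cap J$, giving (iii)$\Leftrightarrow$(iv) immediately. Finally (iv)$\Leftrightarrow$(v) and (iv)$\Leftrightarrow$(vi): using linkage $R/I\cong\lambda(R/J)$ and $R/J\cong\lambda(R/I)$, Proposition~\ref{fund} gives $\Omega(R/I)\cong(\lambda(R/I))^*=(R/J)^*$ stably, and one has the standard identity $\Ext^1_R(R/I,R/I)\cong\Tor_1^R(R/I,R/J)$ coming from the linkage exact sequence $0\to R/J\to R^{b}\to\cdots$; dualizing the presentation of $R/I$ and using $(0:_RI)=J$ produces $\Ext^1_R(R/I,R/I)\cong(I\cap J)/IJ\cong I\cap J$ as well, and symmetrically for $\Ext^1_R(R/J,R/J)$. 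The main obstacle I anticipate is getting the $\Ext^1_R(R/I,R/I)\cong\Tor_1^R(R/I,R/J)$ identification clean: the honest way is to take a minimal free presentation $R^{b_1}\xrightarrow{\phi}R^{b_0}\to R/I\to 0$, observe $\Tr(R/I)$ has presentation $R^{b_0}\xrightarrow{\phi^*}R^{b_1}\to\Tr(R/I)\to0$ with $\Omega\Tr(R/I)=\lambda(R/I)$, and then compute $\Ext^1_R(R/I,R/I)$ via $\Hom(\phi,R/I)$, matching its cokernel with $(R/I\otimes_R\lambda(R/I))$-type data that, after identifying $\lambda(R/I)$ with a twist of $J$, becomes $\Tor_1^R(R/I,R/J)$; Peskine–Szpiro's original duality for linked ideals (or \cite{MS}) supplies exactly this, so I would cite it rather than rederive it.
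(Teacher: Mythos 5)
Your handling of (i)--(iv) is essentially fine (and is the part the paper simply cites to Peskine--Szpiro and Schenzel): the computation $\Tor^R_1(R/I,R/J)\cong (I\cap J)/IJ=I\cap J$ using $IJ=0$, and the grade/associated-primes argument for (ii), are correct. But the heart of the statement --- the equivalence of (v) and (vi) with the rest, which is the only part the paper proves in detail --- is not established in your proposal, and the identity you plan to lean on is actually false. You assert a ``standard identity'' $\Ext^1_R(R/I,R/I)\cong\Tor^R_1(R/I,R/J)\cong(I\cap J)/IJ$ and say you would cite Peskine--Szpiro or \cite{MS} for it. No such isomorphism holds at this level of generality. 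Concretely, take $R=k[x,y]/(x^2,y^2)$, $I=(xy)$, $J=(x,y)$; then $J=(0:_RI)$ and $I=(0:_RJ)$, so $I$ is linked to $J$, and $\Tor^R_1(R/I,R/J)\cong I\cap J=(xy)\cong k$, whereas $\Ext^1_R(R/I,R/I)\cong\Hom_R(I,R/I)\cong\Hom_R(k,R/(xy))\cong\Soc(R/(xy))\cong k^2$. So $\Ext^1$ and $\Tor_1$ are not isomorphic (the dualities of Peskine--Szpiro/Schenzel require Gorenstein or Cohen--Macaulay and finite-length hypotheses and in any case produce Matlis duals, not this isomorphism). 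What is true, and what must be proved, is only the simultaneous vanishing. The paper's route: from $0\to I\to R\to R/I\to 0$ one gets $\Ext^1_R(R/I,R/I)\cong\Hom_R(I,R/I)=\Hom_R(\Hom_R(R/J,R),R/I)$, and this vanishes if and only if $\Supp_R\Hom_R(R/J,R)\cap\Ass_R(R/I)=\emptyset$, which (using that all the relevant primes are minimal) is exactly $\Ass_R(R/I)\cap\Ass_R(R/J)=\emptyset$. Your proposal contains no substitute for this step, so (v) and (vi) remain unproved.

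A smaller flaw: in (iii)$\Rightarrow$(i) you justify the local step by saying that in the Artinian local ring $R_\fp$ two ideals with zero intersection cannot both be proper. That is false as stated (e.g.\ $(x)\cap(y)=0$ in $k[x,y]/(x^2,xy,y^2)$); you must use the linkage: since $I_\fp=(0:_{R_\fp}J_\fp)$ and $J_\fp=(0:_{R_\fp}I_\fp)$, both contain $\Soc(R_\fp)\neq0$ when both are proper, contradicting $I_\fp\cap J_\fp=0$. With that correction, and with the (i)$\Rightarrow$(iii) localization cleaned up ($I_\fp=R_\fp$ forces $J_\fp=(0:_RI)_\fp=0$), your treatment of (i)--(iv) goes through.
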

\begin{proof} The equivalence of (i), (ii), (iii) and (iv) can be easily checked; see \cite{PS} and \cite[Proposition 2.3]{Sc}.

(v)$\Longleftrightarrow$(i) Consider the exact sequence $0\longrightarrow I\longrightarrow R\longrightarrow R/I \longrightarrow 0$. Applying $\Hom_R(-,R/I)$,
we get the exact sequence $$0\rightarrow \Hom_R(R/I,R/I)\overset{\alpha}\rightarrow \Hom_R(R,R/I)\rightarrow \Hom_R(I,R/I)\rightarrow
\Ext^1_R(R/I,R/I)\rightarrow 0.$$ As $\alpha$ is an isomorphism, one has $\Ext^1_R(R/I,R/I)\cong \Hom_R(I,R/I)\cong \Hom_R(\Hom_R(R/J,R),R/I)$.
It follows that $\Ext^1_R(R/I,R/I)=0$ if and only if $\Hom_R(\Hom_R(R/J,R),R/I)=0$ if and only if $\Ass_R(R/I) \cap \Ass_R(R/J) = \emptyset$;
see \cite[Exercise 1.2.27]{BH}.
\end{proof}

We will need the following elementary lemma:

\begin{lem}\label{L1} Let $R$ be an unmixed local ring, and let $M$ be a horizontally linked $R$--module.
Then \emph{$\Ass_R(M) =\Ass_R (R/\Ann_R(M)) $}.
\end{lem}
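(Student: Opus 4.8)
The plan is to prove the two inclusions separately, using the general fact that $\Ass_R(M)\subseteq\Ass_R(R/\Ann_R(M))$ holds for every finitely generated module, so that only the reverse inclusion $\Ass_R(R/\Ann_R(M))\subseteq\Ass_R(M)$ requires the hypotheses. First I would invoke Proposition \ref{F1}: since $R$ is unmixed and $M$ is horizontally linked, $\Ass_R(M)\cup\Ass_R(\lambda M)=\Ass(R)$, and moreover every element of $\Ass(R)$ is a minimal prime of $R$. Thus $\Ass_R(M)$ consists of minimal primes of $R$, and likewise for $\Ass_R(\lambda M)$. The key point is that a set of minimal primes of $R$ is ``separated'' from its complement inside $\Ass(R)$ in the following sense: if $\fp\in\Ass(R)\setminus\Ass_R(M)$, then $\fp$ is minimal and hence not contained in any member of $\Ass_R(M)$, so $\fp\not\supseteq\bigcap_{\fq\in\Ass_R(M)}\fq$.

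Next I would identify $\Ann_R(M)$ with this intersection, up to radical. Since $M$ is horizontally linked, by Theorem \ref{MS2} it is a stable syzygy module, so there is an embedding $M\hookrightarrow F$ with $F$ free; consequently $\Ass_R(M)\subseteq\Ass(R)$ directly, and more importantly $\Supp_R(M)$ behaves well. The main step is: $\sqrt{\Ann_R(M)}=\bigcap_{\fp\in\Supp_R(M)}\fp=\bigcap_{\fp\in\Min\Supp_R(M)}\fp$, and because $\Ass_R(M)$ already consists of minimal primes of $R$ (hence minimal in $\Supp_R(M)$), one gets $\Min\Supp_R(M)=\Ass_R(M)$. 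Therefore $\sqrt{\Ann_R(M)}=\bigcap_{\fp\in\Ass_R(M)}\fp$. Now if $\fq\in\Ass_R(R/\Ann_R(M))$, then $\fq\supseteq\Ann_R(M)$, hence $\fq\supseteq\sqrt{\Ann_R(M)}=\bigcap_{\fp\in\Ass_R(M)}\fp$; by prime avoidance $\fq\supseteq\fp$ for some $\fp\in\Ass_R(M)$. But $\fq$ is itself a minimal prime over $\Ann_R(M)$ — here I use that $\Ass_R(R/\Ann_R(M))$ in fact coincides with the minimal primes of $\Ann_R(M)$, which follows because $\Ass_R(M)$ consists of minimal primes of $R$ so $R/\Ann_R(M)$ has no embedded primes — and $\fp\supseteq\Ann_R(M)$ as well, forcing $\fq=\fp\in\Ass_R(M)$.

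The step I expect to be the main obstacle is verifying carefully that $R/\Ann_R(M)$ has no embedded associated primes, i.e.\ that $\Ass_R(R/\Ann_R(M))=\Min(\Ann_R(M))$; this is where the unmixedness of $R$ and Proposition \ref{F1} really do the work, guaranteeing that all of $\Ass_R(M)$ — and hence, after the radical computation, all minimal primes of $\Ann_R(M)$ — are minimal primes of $R$, leaving no room for an embedded prime of $\Ann_R(M)$ to appear that is not already in $\Ass_R(M)$. Once that is in hand, the two inclusions close up and the lemma follows. I would keep the write-up short, citing \cite[Exercise 1.2.27]{BH} or the standard facts on $\Ass$ and $\Supp$ for the routine containments.
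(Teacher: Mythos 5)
Your proposal has the two inclusions the wrong way around, and this creates genuine gaps. First, the inclusion you dismiss as a ``general fact,'' $\Ass_R(M)\subseteq\Ass_R(R/\Ann_R(M))$, is false for arbitrary finitely generated modules: over a two-dimensional local domain $(R,\fm)$ with a height-one prime $\fp$, the module $M=R/\fp\oplus R/\fm$ has $\Ann_R(M)=\fp$, yet $\fm\in\Ass_R(M)\setminus\Ass_R(R/\fp)$. In the setting of the lemma this inclusion is precisely the direction that needs the hypotheses, and it follows from the facts you set up for the other direction: by Proposition \ref{F1} and unmixedness, every $\fp\in\Ass_R(M)$ is a minimal prime of $R$ containing $\Ann_R(M)$, hence minimal over $\Ann_R(M)$, hence lies in $\Ass_R(R/\Ann_R(M))$. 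This is how the paper argues, so the error is repairable, but as written it is an error.

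Second, and more seriously, your proof of $\Ass_R(R/\Ann_R(M))\subseteq\Ass_R(M)$ hinges on the claim that $R/\Ann_R(M)$ has no embedded primes, and the justification you give --- that the minimal primes of $\Ann_R(M)$ are minimal primes of $R$ --- does not yield this. An ideal whose minimal primes are minimal primes of an unmixed ring can still have embedded associated primes: in $R=k[x,y]_{(x,y)}/(xy)$, which is unmixed, the ideal $(x^2)$ has unique minimal prime $(x)$, a minimal prime of $R$, yet the maximal ideal is associated to $R/(x^2)$ (it is the annihilator of the class of $x$). Knowing $\Ass_R(R/\Ann_R(M))\subseteq\Ass_R(M)$ would of course give unmixedness of $\Ann_R(M)$, but that is exactly the inclusion being proved, so at this point the argument is circular; your radical computation plus the intersection-of-primes argument only shows that each $\fq\in\Ass_R(R/\Ann_R(M))$ \emph{contains} some $\fp\in\Ass_R(M)$. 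The paper closes this gap with a hypothesis-free observation you would need to add: the natural map $R\to\Hom_R(M,M)$ has kernel $\Ann_R(M)$, so $R/\Ann_R(M)$ embeds in $\Hom_R(M,M)$, whence $\Ass_R(R/\Ann_R(M))\subseteq\Ass_R(\Hom_R(M,M))=\Supp_R(M)\cap\Ass_R(M)\subseteq\Ass_R(M)$ (see \cite[Exercise 1.2.27]{BH}); this holds for every finitely generated $M$ with no unmixedness or linkage assumption, and it makes the radical and embedded-prime detour unnecessary.
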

\begin{proof} Set \emph{$I=\Ann_R(M)$}. There is an inclusion $R/I\hookrightarrow \Hom_R(M,M)$ which implies that $\Ass_R(R/I) \subseteq \Ass_R(M)$.
Conversely let $\fp \in \Ass_R(M)$. By \ref{F1}, $\Ass_R(M) \subseteq \Ass(R)$, and so $\fp\in\Ass(R)$. Hence $\fp$ is minimal over $I$ and so $\fp \in \Ass_R(R/I)$.
\end{proof}

\begin{prop}\label{T1} Let $R$ be an unmixed local ring and let $M$ be an $R$-module. Assume $M$ is horizontally linked to $\lambda M$, and that $\Ass_R(M)\cap \Ass_R(\lambda M)=\emptyset$. Then $M$ and $\lambda M$ are free over $R/\Ann_R(M)$ and $R/\Ann_R(\lambda M)$, respectively, and $\Ann_R(M)$ is geometrically linked to $\Ann_R(\lambda M)$.
\end{prop}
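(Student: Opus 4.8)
The plan is to single out one clean claim — that $M$ is free over $R/\Ann_R(M)$ — prove it by localizing at the minimal primes of $R$, and then read off the geometric linkage of the annihilators from an elementary computation of $\lambda$ of a cyclic module.

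First I would set up notation and dispose of trivialities. If $M=0$ there is nothing to prove, so assume $M\neq 0$; then $M$ is stable by Proposition~\ref{MS3}, hence nonfree, hence $\lambda M\neq0$. Put $I=\Ann_R(M)$ and $J=\Ann_R(\lambda M)$, both proper ideals. By Proposition~\ref{F1}, $\Ass_R(M)\cup\Ass_R(\lambda M)=\Ass(R)$, and by hypothesis this union is disjoint. By Lemma~\ref{L1}, $\Ass_R(R/I)=\Ass_R(M)$ and $\Ass_R(R/J)=\Ass_R(\lambda M)$. Since $R$ is unmixed, $\Ass(R)=\Min(R)$, so every $\fp\in\Ass_R(M)$ is a minimal prime of $R$, hence minimal over $I$; consequently $\Min(R/I)=\{\fp/I:\fp\in\Ass_R(M)\}$ and $R/I$ (and likewise $R/J$) has no embedded associated primes.

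The heart of the proof would be to show $M$ is free over $S:=R/I$; by symmetry the same argument then gives $\lambda M$ free over $R/J$. It is enough to prove the first syzygy $\Omega_S(M)$ is zero. Now $\Omega_S(M)$ embeds in a free $S$-module, so $\Ass_S(\Omega_S(M))\subseteq\Ass_S(S)=\{\fp/I:\fp\in\Ass_R(M)\}\subseteq\Min(S)$; thus it suffices to show $\Omega_S(M)$ vanishes after localizing at each $\fp\in\Ass_R(M)$. Fix such a $\fp$. Since $\fp$ is a minimal prime of $R$ and $\fp\notin\Ass_R(\lambda M)$, the $R_\fp$-module $(\lambda M)_\fp$ has no associated prime and hence is zero. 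As $M$ is horizontally linked, Proposition~\ref{fund}(i) gives $\Omega_R(M)\cong(\lambda M)^{\ast}$, so localizing yields $(\Omega_R M)_\fp\cong\Hom_{R_\fp}\big((\lambda M)_\fp,\,R_\fp\big)=0$; feeding this into the localization at $\fp$ of a minimal presentation $0\to\Omega_R(M)\to R^{\,r}\to M\to0$ (here $r$ is the minimal number of generators of $M$) shows $M_\fp\cong R_\fp^{\,r}$ is $R_\fp$-free of positive rank. Therefore $I_\fp=\Ann_{R_\fp}(M_\fp)=0$, so $S_\fp=R_\fp$ and $M_\fp$ is free over $S_\fp$, whence $(\Omega_S M)_\fp=0$. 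Hence $\Omega_S(M)=0$, i.e.\ $M\cong(R/I)^{\oplus n}$ with $n=r\geq1$. I expect this step — in particular the reduction of the vanishing of $\Omega_S(M)$ to the minimal primes, which is exactly where unmixedness of $R$ and Lemma~\ref{L1} enter through the absence of embedded primes in $R/I$, together with the bookkeeping $M_\fp\cong R_\fp^{\,r}\Rightarrow I_\fp=0\Rightarrow S_\fp=R_\fp$ — to be the only place requiring real care.

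To conclude I would invoke the elementary identity $\lambda(R/\fa)\cong R/(0:_R\fa)$, valid for every proper ideal $\fa=(a_1,\dots,a_m)$ (minimal generators): from the minimal presentation $R^{m}\to R\to R/\fa\to 0$ one gets $\Tr(R/\fa)=\coker(R\to R^{m})$, the map sending $1$ to $(a_1,\dots,a_m)$, and this is a minimal presentation of $\Tr(R/\fa)$ since $(a_1,\dots,a_m)\in\fm R^{m}$; hence $\lambda(R/\fa)=\Omega\Tr(R/\fa)$ is the cyclic submodule $R\cdot(a_1,\dots,a_m)\subseteq R^{m}$, which is isomorphic to $R/(0:_R\fa)$. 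Because $\Tr$ and $\Omega$ commute with finite direct sums, $\lambda M\cong\lambda(R/I)^{\oplus n}\cong\big(R/(0:_R I)\big)^{\oplus n}$, so $J=\Ann_R(\lambda M)=(0:_R I)$. Running the argument with $\lambda M$ in place of $M$ — it is horizontally linked, to $\lambda^{2}M=M$, again with disjoint associated primes — gives $\lambda M$ free over $R/J$ and, by the same computation, $I=\Ann_R(M)=(0:_R J)$. Thus $I$ and $J$ are linked by the zero ideal, while $\Ass_R(R/I)\cap\Ass_R(R/J)=\Ass_R(M)\cap\Ass_R(\lambda M)=\emptyset$; so $\Ann_R(M)$ is geometrically linked to $\Ann_R(\lambda M)$, as desired.
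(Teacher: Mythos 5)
Your argument is correct, but it takes a genuinely different route from the paper's. The paper works globally: writing $I=\Ann_R(M)$ and $J=\Ann_R(\lambda M)$, it tensors the exact sequence $0\to M^*\to F_0^*\to\lambda M\to 0$ with $R/J$, uses Lemma \ref{L1} and the disjointness hypothesis to get $\Supp_R(M^*\otimes_RR/J)\cap\Ass_R(R/J)=\emptyset$, hence $\Hom_R(M^*\otimes_RR/J,R/J)=0$, so the induced map into the free $R/J$-module $F_0^*\otimes_RR/J$ is zero and $\lambda M\cong F_0^*\otimes_RR/J$ is $R/J$-free; freeness of $M$ over $R/I$ follows symmetrically, and the linkage of the annihilators is then obtained by reducing to $M=R/I$ and citing \cite[Lemma 3]{MS}. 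You instead argue locally at each $\fp\in\Ass_R(M)$, which is a minimal prime of $R$ by unmixedness and \ref{F1}: there $(\lambda M)_\fp=0$, so $(\Omega M)_\fp\cong\Hom_{R_\fp}\bigl((\lambda M)_\fp,R_\fp\bigr)=0$ by \ref{fund}(i), so $M_\fp$ is $R_\fp$-free and $I_\fp=0$; then $\Omega_{R/I}(M)$, being a submodule of a free $R/I$-module with $\Ass_R(R/I)=\Ass_R(M)$, has no associated primes and must vanish. In place of \cite[Lemma 3]{MS} you prove the identity $\lambda(R/\fa)\cong R/(0:_R\fa)$ directly, which yields the explicit equalities $J=(0:_RI)$ and $I=(0:_RJ)$. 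Both proofs rest on the same inputs (\ref{F1}, \ref{L1}, \ref{fund}, and the disjointness of associated primes); the paper's tensor/Hom-vanishing argument is shorter, while yours is more self-contained and makes explicit both where unmixedness enters and the generic freeness of $M$ (in the spirit of Lemma \ref{P1-1}). Two tiny points to complete your write-up: in the step giving $(\Omega_{R/I}M)_\fp=0$, note that $\mu_{R/I}(M)=\mu_R(M)=r$, so the localized minimal $R/I$-presentation is a surjection $R_\fp^{\,r}\to M_\fp\cong R_\fp^{\,r}$ and hence an isomorphism; and the ideals $I$ and $J$ are nonzero, as the definition of linked ideals requires, because $M$ and $\lambda M$ are stable by Proposition \ref{MS3} and therefore not free over $R$.
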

\begin{proof} Let  $F_1 \rightarrow F_0 \rightarrow M \rightarrow 0$ be a minimal free presentation of $M$.
Then we have an exact sequence $0 \rightarrow M^* \rightarrow F_0^* \rightarrow \lambda M \rightarrow 0$. Set $I=\Ann_R(M)$ and $J=\Ann_R(\lambda M)$.
Applying $-\otimes_RR/J$ gives the exact sequence $$ M^* \otimes_R R/J \overset{f}\longrightarrow F_0^* \otimes_R R/J \longrightarrow \lambda M\otimes_R R/J\longrightarrow 0.$$
Since $\Ass_R(M)\cap \Ass_R(\lambda M)=\emptyset$, we have $\Ass_R(M) \cap \Ass_R(R/J) = \emptyset$ by \ref{L1}. Hence we have $\Supp_R (M^* \otimes_R R/J) \cap \Ass_R (R/J) = \emptyset$. This implies that $\Hom_R(M^* \otimes_R R/J,R/J)=0$; see \cite[Exercise 1.2.27]{BH}. Hence $f=0$ and the exact sequence above implies that $F_0^*\otimes_R R/J\cong \lambda M \otimes_R R/J \cong \lambda M$. Symmetrically, $F_0\otimes_RR/I\cong M$.

For the second part, note that as $M$ is a free $R/I$-module we may assume that $M=R/I$. Since $M$ is horizontally linked, $I$ is linked to $J$; see \cite[Lemma 3]{MS}, and as $\Ass_R(R/I) \cap \Ass_R(R/J) = \emptyset$, $I$ and $J$ are geometrically linked.
\end{proof}

Now we prove Theorem \ref{PS2}

\begin{proof}[Proof of Theorem \ref{PS2}] Set $\overline{R}:=R/c$. As $c$ is a $\gk$Gorenstein ideal, $\overline{R}$ is a Cohen-Macaulay ring and $\Ext^n_R(\overline{R},K)\cong \overline{R}$. Note that $\grade(M)=\grade(c)$ by \cite[Lemma 5.8]{DS}. Therefore $M$ and hence $N$, are totally reflexive $\overline{R}$-modules, by Theorem \ref{G2}. Note that $\fp \in \Ass_R(M)$ if and only if $\fp/c \in \Ass_{\overline{R}}M$. Hence by Proposition \ref{T1}, $M$ and $N$ are free over $\overline{R}/\Ann_{\overline{R}}(M)$ and $\overline{R}/\Ann_{\overline{R}}(N)$, respectively, and $\Ann_{\overline{R}}(M)$ is geometrically linked to $\Ann_{\overline{R}}(N)$. Thus $M\otimes_{\overline{R}}N$ is a free $T$-module, where $T=\frac{\overline{R}}{(\Ann_{\overline{R}}(M)+\Ann_{\overline{R}}(N))}$. But as $M$ is totally reflexive, it follows that $\overline{R}/\Ann_{\overline{R}}(M)$ and $\overline{R}/\Ann_{\overline{R}}(N)$ are totally reflexive $\overline{R}$-modules. Set $I=\Ann_{\overline{R}}(M)$ and $J=\Ann_{\overline{R}}(N)$. By \ref{P1} (iii), $I\cap J=0$ and hence one has $\grade_{\overline{R}}(I+J)>0$, by \ref{P1} (ii). There is an exact sequence $0\rightarrow \overline{R}\rightarrow \overline{R}/I\oplus \overline{R}/J\rightarrow T\rightarrow 0$. Since $\overline{R}/I$ and $\overline{R}/J$ are totally reflexive $\overline{R}$-modules, it follows that $\Ext^i_{\overline{R}}(T,\overline{R})=0$ for all $i\neq 1$. Also, by applying $(-)^*=\Hom_{\overline{R}}(-,\overline{R})$ to the above exact sequence, we get the commutative diagram with exact rows:
$$\begin{CD}
&&&&&&&&\\
\ \ &&&& 0@>>>(\overline{R}/I)^*\oplus (\overline{R}/J)^* @>>> (\overline{R})^* @>>> \Ext^1_{\overline{R}}(T,\overline{R}) @>>>0&  \\
\ \ &&&&&& @V{\cong}VV  @V{\cong}VV \\
\ \ &&&& 0@>>> I+J @>>> \overline{R}  @>>> T @>>>0.&\\
\end{CD}$$
It follows that $\Ext^1_{\overline{R}}(T,\overline{R})\cong T$. Therefore $\Ann_R(M)+\Ann_R(N)$ is a $\gk$Gorenstein ideal of grade $n+1$ by Lemma \ref{G1}.
\end{proof}

Let $R$ be an unmixed local ring of dimension $d\geq 1$ and let $M$ be a horizontally linked $R$-module. It follows from \ref{T1} and \ref{P1} that if $\Ass_R(M)\cap\Ass_R(\lambda M)=\emptyset$, then $\Tor^R_1(M,\lambda N)=0$. Consider the exact sequence
$$0\longrightarrow \Tor^R_1(M,\lambda M)\longrightarrow M\otimes_RM^* \longrightarrow M\otimes_RF_0^*.$$
Therefore if $\Tor^R_1(M,\lambda M)=0$ then $\depth_R(M\otimes_R M^*)>0$. In particular, if $R$ is a domain, then $\Tor^R_1(M,\lambda M)=0$ if and only if $M \otimes_R M^*$ is a torsion-free $R$-module if and only if $\lambda M \otimes_R (\lambda M)^*$ is a torsion-free $R$-module. Huneke and Wiegand \cite{HW} conjectured that if $R$ is a one-dimensional Gorenstein domain and $M$ is a finitely generated torsion-free $R$-module such that
$M\otimes_RM^*$ also is torsion-free then $M$ is free. The Huneke-Wiegand conjecture holds over one-dimensional hypersurface rings and also some
classes of numerical semigroup rings; see \cite{HW}, \cite{GSL}, and \cite{GTTT}, and it is still open for complete intersection rings of higher codimension. Therefore, the Huneke-Wiegand conjecture states that over a one-dimensional Gorenstein local domain $R$, there exists no horizontally linked $R$-module $M$ such that $\Tor^R_1(M,\lambda M)=0$.

Without assuming $R$ is domain, we are interested in conditions on the base ring $R$ and a horizontally linked $R$-module $M$ such that vanishing of
$\Tor^R_1(M,\lambda M)$ implies that $\Ass_R(M)\cap\Ass_R(\lambda M)=\emptyset$, and hence $M$ is a free $R/\Ann_R(M)$-module. However, in general vanishing of $\Tor^R_1(M,\lambda M)$ does not guarantee that $\Ass_R(M)\cap\Ass_R(\lambda M)=\emptyset$; see Example \ref{Ex2}.

An $R$-module $M$ is called \emph{generically free} if $M_{\fp}$ is a free $R_{\fp}$-module, for all $\fp\in \Ass(R)$.

\begin{lem}\label{P1-1} Let $R$ be a generically Gorenstein unmixed local ring and let $M$ be a horizontally linked $R$--module.
Then the following statements are equivalent.
\begin{enumerate}[\rm(i)]
\item \emph{$\Supp_R(\Tor^R_1(M,\lambda M))\cap\Ass(R)=\emptyset$}.
\item $M$ is generically free.
\end{enumerate}
\end{lem}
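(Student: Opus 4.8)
The plan is to localize at the associated primes of $R$, reduce to the case of an Artinian Gorenstein local ring, and there recognize $\Tor^R_1(M,\lambda M)$ as the Matlis dual of a stable endomorphism ring.

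First I would use that $\Tor^R_1(M,\lambda M)$ is finitely generated, so that (i) holds if and only if $\Tor^{R_\fp}_1\big(M_\fp,(\lambda M)_\fp\big)=0$ for every $\fp\in\Ass(R)$. Since $R$ is unmixed each such $\fp$ is a minimal prime, so $A:=R_\fp$ is zero-dimensional, and since $R$ is generically Gorenstein, $A$ is Gorenstein; hence $A$ is an Artinian Gorenstein local ring. Both $\Tr$ and $\Omega$ are formed from a minimal free presentation, which localizes to a (generally non-minimal) presentation, so $\lambda$ is compatible with localization up to free summands; thus $(\lambda M)_\fp\underset{st}\cong\lambda_A(M_\fp)$, and as $\Tor_1$ is unaffected by free summands in either argument we get $\Tor^{A}_1\big(M_\fp,(\lambda M)_\fp\big)\cong\Tor^A_1\big(M_\fp,\lambda_A(M_\fp)\big)$. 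So the lemma reduces to the local assertion: \emph{over an Artinian Gorenstein local ring $A$ and a finitely generated $A$-module $L$, one has $\Tor^A_1(L,\lambda L)=0$ if and only if $L$ is free.} Granting this, (i) becomes ``$M_\fp$ is free over $R_\fp$ for all $\fp\in\Ass(R)$'', i.e. (ii).

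To prove the local assertion I would assemble three facts. First, $A$ is self-injective, so $\Ext^i_A(-,A)=0$ for $i\ge1$, every finitely generated $A$-module is reflexive, and $A\cong E_A(A/\fm)$ is an injective cogenerator; in particular every stable $A$-module is horizontally linked by Theorem \ref{MS2}, so Proposition \ref{fund}(i) and reflexivity give $\lambda L\cong(\Omega L)^\ast$ — first for stable $L$, then for arbitrary $L$, since replacing $L$ by its stable part alters neither side. Second, the Matlis-type isomorphism $\Tor^A_i\big(L,\Hom_A(N,A)\big)\cong\Hom_A\big(\Ext^i_A(L,N),A\big)$ (valid for finitely generated $N$, as $A$ is injective over itself) applied with $N=\Omega L$ gives
\[
\Tor^A_1(L,\lambda L)\;\cong\;\Hom_A\!\big(\Ext^1_A(L,\Omega L),A\big).
\]
Third, applying $\Hom_A(L,-)$ to a minimal presentation $0\to\Omega L\to F\to L\to0$ with $F$ free and using $\Ext^1_A(L,F)=0$, one identifies $\Ext^1_A(L,\Omega L)$ with the cokernel of $\Hom_A(L,F)\to\Hom_A(L,L)$, i.e. with the stable endomorphism ring of $L$; this is zero exactly when the identity of $L$ factors through a free module, i.e. when $L$ is free. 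Since $\Hom_A(-,A)$ is faithful and exact, chaining these identifications proves the local assertion, hence the lemma.

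The step I expect to be the main obstacle is the reduction in the second paragraph: verifying carefully that $\lambda$ commutes with localization up to stable isomorphism (so that $(\lambda M)_\fp$ is, up to free summands, the horizontal link of $M_\fp$ over $R_\fp$) and that this is harmless for $\Tor_1$. Once the question is moved to an Artinian Gorenstein ring, the heart — identifying $\Tor^A_1(L,\lambda L)$ with the Matlis dual of $\underline{\Hom}_A(L,L)$ — is essentially formal, resting only on self-injectivity, Proposition \ref{fund}, and the $\Tor$–$\Hom$ duality.
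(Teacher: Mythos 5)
Your proof is correct and follows essentially the same route as the paper: localize at the (minimal) associated primes to reduce to an Artinian Gorenstein local ring, use the compatibility of $\lambda$ with localization up to free summands, and then apply the $\Hom$--$\Tor$ duality over the self-injective ring together with $(\lambda M)^*\underset{st}\cong\Omega M$ (Proposition \ref{fund}) to translate the vanishing of $\Tor_1(M,\lambda M)$ into $\Ext^1(M,\Omega M)=0$, hence freeness. The only differences are cosmetic: you state the duality in the form $\Tor^A_1(L,\lambda L)\cong\Hom_A(\Ext^1_A(L,\Omega L),A)$ and invoke faithfulness of $\Hom_A(-,A)$, whereas the paper dualizes $\Tor_1$ directly, but the mechanism is identical.
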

\begin{proof}  (i)$\Rightarrow$(ii). Let $\fp\in \Ass(R)$. Since $R$ is unmixed, $\fp$ is a minimal prime ideal of $R$, and so $R_{\fp}$ is Artinian Gorenstein local ring.  By assumption $\Tor^{R_\fp}_1(M_\fp,(\lambda M)_\fp)=0$ and since $(\lambda M)_\fp$ is stably isomorphic to $\lambda_{R_{\fp}} M_\fp$, we get $\Tor^{R_\fp}_1(M_\fp,\lambda_{R_{\fp}} M_\fp)=0$. Hence it is enough to show that if $R$ is an Artinian Gorenstein local ring and $M$ is an $R$-module such that $\Tor^R_1(M,\lambda M)=0$ then $M$ is free. Indeed, since $\Tor^R_1(M,\lambda M)=0$ so $\Hom_R(\Tor^R_1(M,\lambda M),R)=0$ and as $R$ is Artinian Gorenstein ring, the isomorphism $\Hom_R(\Tor^R_1(M,\lambda M),R)\cong \Ext^1_R(M,(\lambda M)^*)$; see for example \cite[page 7]{EG}; implies that $\Ext^1_R(M,(\lambda M)^*)=0$. But by \ref{fund}, $(\lambda M)^*\underset{st}\cong \Omega M$. Hence $\Ext^1_R(M,\Omega M)=0$ and so $M$ is free $R$-module.
\indent Conversely, if $M$ is generically free $R$--module, then $\Tor^{R_\fp}_1(M_\fp,(\lambda M)_\fp)=0$, for all $\fp\in \Ass R$. Hence $\Supp_R(\Tor^R_1(M,\lambda M))\cap\Ass_R(R)=\emptyset$ and we are done.
\end{proof}

\begin{defn}\label{CR}\emph{An $R$-module $M$ is said to have \emph{constant rank} $n$ on a subset $S$ of $\Spec R$ if $M_\fp$ is a free $R_\fp$-module with rank $n$ for all $\fp\in S$.}
\end{defn}

\begin{prop}\label{CR} Let $R$ be a generically Gorenstein unmixed local ring and let $M$ be a finitely generated $R$-module. Assume $M$ is horizontally linked to $\lambda M$ with $\Tor^R_1(M,\lambda M)=0$. If $\Ann_R(M)\neq 0$, then $\Ann_R(M)$ is geometrically linked to $(0:_R\Ann_R(M))$.
Moreover if $\lambda M$ is of constant rank on \emph{$\Ass_R(\lambda M)$}, then $\Ass_R(M)\cap \Ass_R(\lambda M)=\emptyset$.
\end{prop}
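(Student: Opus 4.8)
The plan is to establish the geometric linkage assertion first, and then deduce the ``moreover'' part from it by a rank count over the minimal primes of $R$. Throughout I write $I=\Ann_R(M)$ and $J=(0:_RI)$, fix a minimal free presentation $F_1\to F_0\to M\to 0$ with $\mu:=\operatorname{rank}_RF_0=\mu_R(M)$, and use the exact sequence $0\to M^{*}\to F_0^{*}\to\lambda M\to 0$ coming from $(\ref{eq1})$. We may assume $M\ne 0$, since otherwise everything is trivial; as $M$ is horizontally linked it is stable, so $\lambda M\ne 0$ and $\mu\ge 1$.

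For the first assertion: by Lemma \ref{L1} and Proposition \ref{F1} we have $\Ass_R(R/I)=\Ass_R(M)\subseteq\Ass(R)$, and since $I\ne 0$ the cyclic module $R/I$ is stable, so Proposition \ref{MS4} and Theorem \ref{MS2} show that $R/I$ is horizontally linked; hence $I$ is linked to $J$ and $\lambda(R/I)\cong R/J$. Also $\Tor^R_1(M,\lambda M)=0$ makes $M$ generically free, by Lemma \ref{P1-1}. Now let $\fp\in\Ass(R)$, so $R_\fp$ is Artinian. If $M_\fp\ne 0$, then $M_\fp$ is a nonzero free $R_\fp$-module, hence faithful, so $I_\fp=\Ann_{R_\fp}(M_\fp)=0$, $J_\fp=(0:_{R_\fp}I_\fp)=R_\fp$, and $\fp\notin\Supp_R(R/J)$. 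If $M_\fp=0$, then $\fp\notin\Supp_R(R/I)=V(I)$, so $\fp\notin\Ass_R(R/I)$, and Proposition \ref{F1} applied to the horizontally linked module $R/I$ (together with $\lambda(R/I)\cong R/J$) forces $\fp\in\Ass_R(R/J)$. Therefore $\Ass_R(R/I)=\{\fp\in\Ass(R):M_\fp\ne 0\}$ and $\Ass_R(R/J)=\{\fp\in\Ass(R):M_\fp=0\}$, which are disjoint; hence $I$ is geometrically linked to $J$.

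For the ``moreover'' part, for $\fp\in\Ass(R)$ set $a(\fp)=\operatorname{rank}_{R_\fp}M_\fp$ (legitimate since $M$ is generically free). Localizing $0\to M^{*}\to F_0^{*}\to\lambda M\to 0$ at $\fp$ gives a short exact sequence of finitely generated $R_\fp$-modules with $(M^{*})_\fp\cong R_\fp^{a(\fp)}$, $(F_0^{*})_\fp\cong R_\fp^{\mu}$, and $(\lambda M)_\fp$ free over $R_\fp$ --- of rank $n$ if $\fp\in\Ass_R(\lambda M)$ (by the constant-rank hypothesis) and zero otherwise, because $\Ass_R(\lambda M)\subseteq\Ass(R)$ by Proposition \ref{F1}. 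The sequence therefore splits and $\operatorname{rank}_{R_\fp}(\lambda M)_\fp=\mu-a(\fp)$, so $a(\fp)=\mu-n$ when $\fp\in\Ass_R(\lambda M)$ and $a(\fp)=\mu$ otherwise; in particular $a(\fp)\in\{\mu-n,\mu\}$ for every $\fp\in\Ass(R)$. Since $I\ne 0$ we have $R/J\ne 0$, hence $\Ass_R(R/J)\ne\emptyset$, so by the first part there is $\fp_0\in\Ass(R)$ with $M_{\fp_0}=0$; then $a(\fp_0)=0$, which forces $\mu-n=0$, i.e.\ $n=\mu$. Consequently $a(\fp)\in\{0,\mu\}$ for all $\fp\in\Ass(R)$, so no minimal prime satisfies $1\le a(\fp)\le\mu-1$. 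Finally, for $\fp\in\Ass(R)$ and any finitely generated $N$ one has $\fp\in\Ass_R(N)\iff N_\fp\ne 0$, so (using $\Ass_R(M),\Ass_R(\lambda M)\subseteq\Ass(R)$ from Proposition \ref{F1}) $\Ass_R(M)=\{\fp\in\Ass(R):a(\fp)\ge 1\}$ and $\Ass_R(\lambda M)=\{\fp\in\Ass(R):a(\fp)\le\mu-1\}$; therefore $\Ass_R(M)\cap\Ass_R(\lambda M)=\emptyset$.

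The essential difficulty is the last step of the ``moreover'' part: a priori the constant rank $n$ of $\lambda M$ could satisfy $1\le n\le\mu-1$, and then every minimal prime $\fp$ with $a(\fp)=\mu-n$ would lie in $\Ass_R(M)\cap\Ass_R(\lambda M)$, contradicting the conclusion. Eliminating this is precisely where the hypothesis $\Ann_R(M)\ne 0$ enters: it guarantees $R/J\ne 0$, hence the existence of a minimal prime of $R$ at which $M$ vanishes, which pins down $n=\mu$. The remaining ingredients are routine: the splitting of short exact sequences of free modules over a local ring, and the commutation of annihilators and colon ideals with localization for finitely generated modules over the Noetherian ring $R$.
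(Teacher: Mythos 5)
Your proof is correct and follows essentially the same route as the paper's: the first assertion via Lemma \ref{L1}, Proposition \ref{F1}, Proposition \ref{MS4} and the generic freeness from Lemma \ref{P1-1}, and the ``moreover'' part by localizing $0\to M^*\to F_0^*\to\lambda M\to 0$ at minimal primes and comparing ranks, with $\Ann_R(M)\neq 0$ supplying a minimal prime outside $\Supp_R(M)$. The only difference is bookkeeping: you pin the constant rank down to $\mu_R(M)$ and read off the disjointness, whereas the paper simply contrasts the ranks of $\lambda M$ at a prime of $\Ass_R(M)$ and at a prime of $\Ass(R)\setminus\Ass_R(M)$.
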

\begin{proof} Set $I=\Ann_R(M)$. By \ref{L1} and \ref{F1}, $\Ass_R(R/I)=\Ass_R(M)\subseteq \Ass(R)$. Hence, by \ref{MS4},
$R/I$ is a horizontally linked $R$-module and so $I$ is a linked ideal. Let $\fp \in \Ass_R(M)=\Ass_R(R/I)$, and set $J=(0:_RI)$.
Since by Lemma \ref{P1-1}, $M_\fp$ is a free $R_\fp$--module, $IR_p=0$, and since $J=(0:_RI)$, we have $JR_\fp=R_\fp$.
Hence $J\nsubseteq \fp$ and so $\fp \notin \Ass_R(R/J)$. This means that $I$ and $J$ have no common associated prime ideal.

Now assume that $\lambda M$ has constant rank on $\Ass_R(\lambda M)$. There is an exact sequence $0 \rightarrow M^* \rightarrow F \rightarrow \lambda M\rightarrow 0$, where $F$ is a free module. Choose $\fp, \fq \in \Ass(R)$ such that $\fp \in \Ass_RM$ and $\fq \in \Ass (R)\setminus \Ass_R(M)$. Note that
as $\Ann_R(M)$ is geometrically linked, Lemma \ref{L1} implies that such $\fq$ exists. Also note that by \ref{F1}, we have $\Ass_R(M) \subseteq \Ass(R)$ and $\Ass_R(\lambda M) \subseteq \Ass(R)$. Hence, localizing the exact sequence above at $\fp$ and $\fq$, we get $\rank(\lambda M)_{\fp}\neq \rank(\lambda M)_{\fq}$. Since $\lambda M$ has constant rank on $\Ass_R(\lambda M)$ we must have $(\lambda M)_{\fp} = 0$. Thus $\Ass_R(M)$ and $\Ass_R(\lambda M)$ have empty intersection.
\end{proof}

The assumption $\Ann_R(M)\neq 0$ in Proposition \ref{CR} is crucial. In section 4, we give an example of a module $M$ over a one-dimensional Gorenstein local ring $R$, such that $M$ is horizontally linked to $\lambda M$, $\Tor^R_1(M,\lambda M)=0$, and $\lambda M$ has constant rank on $\Ass_R(\lambda M)$, but $\Ass_R(M)\cap \Ass_R(\lambda M)\neq\emptyset$; see \ref{Ex2}.

Now we prove Theorem \ref{Int1}.

\begin{proof}[Proof of Theorem \ref{Int1}] Set $I=\Ann_R(M)$ and $J=\Ann_R(\lambda M)$.

(i)$\Rightarrow$(ii) follows from Proposition \ref{T1}.

(ii)$\Rightarrow$(i) and (ii)$\Rightarrow$(iv). We have $\Ass_R(R/I)\cap \Ass_R(R/J)=\emptyset$. Hence by Lemma \ref{L1}, $\Ass_R(M)\cap\Ass_R(\lambda M)=\emptyset$ and therefore, $M$ is free $R/I$-module, by Proposition \ref{T1}. Hence $\Ext^1_R(M,M)=0$, by Proposition \ref{P1}.

(iv)$\Rightarrow$(iii) follows from Proposition \ref{P1}.

(iii)$\Rightarrow$(ii) By Proposition \ref{CR}, $I$ is geometrically linked to $(0:_RI)$ and as $(0:_RI)=J$, we are done.
\end{proof}

As an application of Proposition \ref{T1} and Proposition \ref{CR}, we prove the following result, which is related to Huneke-Wiegand conjecture in a sense.

\begin{thm}\label{Int2} Let $R$ be a Cohen-Macaulay local domain of dimension $d\geq 1$ admitting a canonical module $\omega_R$.
Let $M$ be a torsion-free $R$-module such that $M\otimes_R\Hom_R(M,\omega_R)$ also is torsion-free $R$-module.
Assume $R\cong S/\fp$, where $S$ is a Gorenstein local ring and $\fp$ is a geometrically linked prime ideal of $S$.
If $\Ann_S(\Omega_SM)\neq 0$, then $M$ is a free $R$-module.
\end{thm}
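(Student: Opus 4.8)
The plan is to regard $M$ as a module over $S$ through $S\twoheadrightarrow R=S/\fp$, to check that $M$ is horizontally linked over $S$, and then to feed the resulting geometric linkage into Propositions \ref{T1} and \ref{CR}. First I would reduce and extract the geometry of $\fp$. We may assume $M\neq0$; and if $\fp=0$ then $R=S$ is a Gorenstein domain, $\Omega_SM$ is a submodule of a free module, hence torsion-free, so $\Ann_S(\Omega_SM)\neq0$ forces $\Omega_SM=0$ and $M$ is free --- so assume $\fp\neq0$. Since $\fp$ is linked to $(0:_S\fp)$ with linking ideal $0$, Proposition \ref{P1}, applied to the unmixed ring $S$, gives $\grade_S(\fp)=0$ and $\fp\cap(0:_S\fp)=0$. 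Thus $\fp$ is a minimal prime of the Gorenstein ring $S$, so $S_\fp$ is Artinian Gorenstein; localizing $\fp\cap(0:_S\fp)=0$ at $\fp$ gives $\fp S_\fp\cap\Soc(S_\fp)=0$, and since $\Soc(S_\fp)\subseteq\fp S_\fp$ we conclude $\Soc(S_\fp)=0$, i.e. $S_\fp$ is a field. Finally, $\grade_S(\fp)=0$ and $S$ Gorenstein give $\omega_R\cong\Hom_S(R,S)=(0:_S\fp)$, whence $\Hom_R(M,\omega_R)\cong\Hom_S(M,S)$ and the hypothesis on $M$ becomes: $M\otimes_S\Hom_S(M,S)$ is torsion-free over $R$.

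Next I would verify that $M$ is horizontally linked over $S$. As $M\neq0$ is torsion-free over the domain $R$, $\Ass_S(M)=\{\fp\}\subseteq\Ass(S)$, so $M$ is a syzygy $S$-module by Proposition \ref{MS4}; and $M$ is stable over $S$, since $\fp$ annihilates $M$ but no nonzero free $S$-module. Hence $M$ is horizontally linked over $S$ by Theorem \ref{MS2}. Put $\lambda_SM=\Omega_S\Tr_SM$ and fix a minimal $S$-presentation $S^{n_1}\to S^{n_0}\to M\to0$ with $n_0=\mu_S(M)\geq1$. Then the sequence (\ref{eq1}) over $S$ reads
$$0\longrightarrow\Hom_S(M,S)\longrightarrow S^{n_0}\longrightarrow\lambda_SM\longrightarrow0,$$
and Proposition \ref{fund} gives $\Omega_SM\cong\Hom_S(\lambda_SM,S)$. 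For each minimal prime $\fr\neq\fp$ of $S$ we have $\fp\not\subseteq\fr$, hence $M_\fr=0$, so the displayed sequence localizes to $(\lambda_SM)_\fr\cong S_\fr^{n_0}$.

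The crux is to deduce $\Ass_S(M)\cap\Ass_S(\lambda_SM)=\emptyset$ from $\Ann_S(\Omega_SM)\neq0$. Let $\tau\subseteq S$ be the trace ideal of $\lambda_SM$, i.e. the image of the evaluation map $\lambda_SM\otimes_S\Hom_S(\lambda_SM,S)\to S$; since $\Omega_SM\cong\Hom_S(\lambda_SM,S)$, one has $\Ann_S(\Omega_SM)=(0:_S\tau)$. As $(0:_S\tau)\neq0$, the ideal $\tau$ consists of zerodivisors of $S$, hence lies in some associated, equivalently some minimal, prime of the Gorenstein ring $S$; but by the previous paragraph $\tau_\fr=S_\fr$ for every minimal prime $\fr\neq\fp$ (there $\lambda_SM$ is free of positive rank), so necessarily $\tau\subseteq\fp$. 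Localizing at $\fp$, $\tau_\fp=0$ because $\tau\subseteq\fp$ and $S_\fp$ is a field; but $\tau_\fp$ is the trace ideal of $(\lambda_SM)_\fp$ as a module over the field $S_\fp$, hence vanishes only when $(\lambda_SM)_\fp=0$. Therefore $\fp\notin\Supp_S(\lambda_SM)$, so $\fp\notin\Ass_S(\lambda_SM)$, and as $\Ass_S(M)=\{\fp\}$ the two sets are disjoint. This is also where Proposition \ref{CR} applies: $\Tor^S_1(M,\lambda_SM)$ embeds in $M\otimes_S\Hom_S(M,S)$, which is torsion-free over $R$ of rank $0$ (since $S_\fp$ is a field), hence vanishes; then Proposition \ref{CR} --- whose constant-rank hypothesis on $\lambda_SM$ over $\Ass_S(\lambda_SM)$ is met because $(\lambda_SM)_\fp=0$ --- recovers the same disjointness, together with the fact that $\Ann_S(M)$ is geometrically linked to $(0:_S\Ann_S(M))$.

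To finish, Proposition \ref{T1}, applied to the unmixed ring $S$, the horizontally linked module $M$, and the disjointness just obtained, shows $M$ is free over $S/\Ann_S(M)$; and $\Ass_S(M)=\{\fp\}$ together with $R=S/\fp$ being a domain forces $\Ann_S(M)=\fp$ by Lemma \ref{L1}, so $M$ is free over $R$. The step I expect to be the main obstacle is the ``crux'' paragraph: the only concrete hypothesis available there, $\Ann_S(\Omega_SM)\neq0$, carries a priori no geometric content, so the argument must route it through the trace-ideal identity for $\Hom_S(\lambda_SM,S)$ and through the fact --- which is precisely what geometric linkage of $\fp$ provides --- that $S_\fp$ is a field; keeping the behaviour at the various minimal primes of $S$ under control (so that $\tau$ can hide only inside $\fp$) is the delicate point.
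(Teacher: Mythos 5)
Your proof is correct, and while its outer skeleton coincides with the paper's (pass to $S$, check that $M$ is horizontally linked over $S$, identify $\Hom_R(M,\omega_R)$ with $\Hom_S(M,S)$, establish $\Ass_S(M)\cap\Ass_S(\lambda_SM)=\emptyset$, finish with Proposition \ref{T1} and $\Ann_S(M)=\fp$), your key middle step is genuinely different. The paper uses the torsion-freeness of $M\otimes_R\Hom_R(M,\omega_R)$ to get $\Ass_S(M\otimes_S\Hom_S(M,S))=\{\fp\}$, hence $\Tor^S_1(M,\lambda_SM)=0$ by Lemma \ref{depth}, then deduces $\Ann_S(\lambda_SM)\neq0$ from $\Ann_S(\Omega_SM)\neq0$ and applies Proposition \ref{CR} with the roles of $M$ and $\lambda_SM$ interchanged ($M$ trivially of constant rank on $\Ass_S(M)=\{\fp\}$). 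You instead use the identity $\Ann_S(\Hom_S(\lambda_SM,S))=(0:_S\tau)$ for the trace ideal $\tau$ of $\lambda_SM$, together with localization at the minimal primes of $S$ and the fact that $S_\fp$ is a field, to conclude $(\lambda_SM)_\fp=0$ directly; disjointness of associated primes follows at once, with no appeal to Lemma \ref{depth} or Proposition \ref{CR} (your Tor-vanishing remark is only a cross-check). What this buys is worth noting: your main line never uses the hypothesis that $M\otimes_R\Hom_R(M,\omega_R)$ is torsion-free, so it proves a formally stronger statement --- indeed, unwinding your computation, $\Ann_S(\Omega_SM)\neq0$ forces $(\lambda_SM)_\fp=0$, i.e.\ $\mu_R(M)=\operatorname{rank}_R(M)$, and a torsion-free module over a domain minimally generated by rank-many elements is free; it also supplies the justification, merely asserted in the paper, that $\Ann_S(\Omega_SM)\neq0$ yields $\Ann_S(\lambda_SM)\neq0$.

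Three small repairs. The minimality (grade zero) of $\fp$ is not a conclusion of Proposition \ref{P1}, whose statement assumes grade zero; it follows instead because $\fp$ annihilates the nonzero ideal $(0:_S\fp)$ and $S$ is unmixed. Your sentence ``we conclude $\Soc(S_\fp)=0$, i.e.\ $S_\fp$ is a field'' should be phrased as a contradiction: a nonzero Artinian local ring has nonzero socle, so the inclusion $\Soc(S_\fp)\subseteq\fp S_\fp$ must fail, forcing $\fp S_\fp=0$ (this is exactly the argument in the proof of Lemma \ref{depth}). Finally, in the Tor aside it is $\Tor^S_1(M,\lambda_SM)$, not $M\otimes_S\Hom_S(M,S)$, that has rank zero over $R$; the correct phrasing is that a torsion $R$-submodule of the torsion-free module $M\otimes_S\Hom_S(M,S)$ must vanish.
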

Before the proof, we need the following lemma:
\begin{lem}\label{depth} Let $R$ be a generically Gorenstein unmixed local ring of dimension $d\geq 1$ and let $M$ be a horizontally linked $R$-module.
Assume $\Ann_RM$ is a non-zero prime ideal of $R$. Then $\Tor^R_1(M,\lambda M)=0$ if and only if $\Ann_RM$ is a geometrically linked ideal of $R$ and $\Ass_R(M\otimes_RM^*)\subseteq \Ass_R(M)$.
\end{lem}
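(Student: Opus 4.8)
The plan is to reduce the statement to the local ring $R_I$, where $I:=\Ann_R M$, and to settle one of the two implications essentially for free via Proposition~\ref{CR}.

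First I would fix the basic structure. Since $R$ is unmixed and $M$ is horizontally linked, Lemma~\ref{L1} gives $\Ass_R(M)=\Ass_R(R/I)$; as $I$ is prime this equals $\{I\}$, and by Proposition~\ref{F1} it lies in $\Ass(R)$, so $I$ is a minimal prime of $R$. Hence $R_I$ is Artinian local, and it is Gorenstein because $R$ is generically Gorenstein. Next, from a minimal free presentation $F_1\to F_0\to M\to0$ one obtains, as in the proof of Proposition~\ref{T1}, the exact sequence $0\to M^\ast\to F_0^\ast\to\lambda M\to0$; tensoring with $M$ and using that $F_0^\ast$ is free yields the exact sequence
$$0\longrightarrow\Tor^R_1(M,\lambda M)\longrightarrow M\otimes_R M^\ast\longrightarrow M\otimes_R F_0^\ast\cong M^{\oplus t},\qquad t=\rank F_0\geq1.$$
This is the common engine of both directions: $\Tor^R_1(M,\lambda M)$ is always a submodule of $M\otimes_R M^\ast$, and it vanishes precisely when $M\otimes_R M^\ast$ embeds in $M^{\oplus t}$.

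For ``$\Rightarrow$'': assuming $\Tor^R_1(M,\lambda M)=0$, Proposition~\ref{CR} (applicable since $\Ann_R M\neq0$) gives that $I$ is geometrically linked to $(0:_R I)$, i.e.\ $I$ is a geometrically linked ideal; and the displayed sequence becomes an embedding $M\otimes_R M^\ast\hookrightarrow M^{\oplus t}$, so $\Ass_R(M\otimes_R M^\ast)\subseteq\Ass_R(M^{\oplus t})=\Ass_R(M)$. For ``$\Leftarrow$'': assume $I$ is a geometrically linked ideal and $\Ass_R(M\otimes_R M^\ast)\subseteq\Ass_R(M)=\{I\}$. From the displayed sequence $\Ass_R(\Tor^R_1(M,\lambda M))\subseteq\Ass_R(M\otimes_R M^\ast)\subseteq\{I\}$, so it suffices to prove $\Tor^R_1(M,\lambda M)_I=0$. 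Here the geometric-linkage hypothesis enters through the following observation: since $\Ass_R(R/I)\cap\Ass_R(R/(0:_R I))=\emptyset$ and $\Ass_R(R/I)=\{I\}$, we have $I\notin\Ass_R(R/(0:_R I))$; localizing at $I$ and using that over the Artinian local ring $R_I$ every nonzero module has the maximal ideal $IR_I$ as its unique associated prime, this forces $(0:_R I)R_I=R_I$, hence $(0:_R I)\not\subseteq I$. Choosing $y\in(0:_R I)\setminus I$, its image in $R_I$ is a unit annihilating $IR_I$, so $IR_I=0$ and $R_I$ is a field. Over a field all higher Tor modules vanish, so $\Tor^R_1(M,\lambda M)_I=\Tor^{R_I}_1(M_I,(\lambda M)_I)=0$ and the proof is complete.

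The only real obstacle is the converse direction: trapping $\Ass_R(\Tor^R_1(M,\lambda M))$ inside $\{I\}$ is purely formal, so the substance lies in converting the geometric-linkage hypothesis into the vanishing of $\Tor^{R_I}_1(M_I,(\lambda M)_I)$, and the step that makes this work is the realization that geometric linkage of the minimal prime $I$ forces $R_I$ to be a field.
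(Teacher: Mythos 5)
Your proof is correct and takes essentially the same route as the paper's: the forward direction via Proposition \ref{CR} together with the embedding $\Tor^R_1(M,\lambda M)\hookrightarrow M\otimes_RM^*$, and the converse by showing that geometric linkage of the minimal prime $I=\Ann_RM$ forces $R_I$ to be a field, whence $\Tor^R_1(M,\lambda M)$ has no associated primes and vanishes. The only cosmetic difference is that the paper deduces the vanishing at associated primes by citing Lemma \ref{P1-1} after noting $M_\fp$ is free, while you localize the Tor at $I$ directly; these amount to the same computation.
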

\begin{proof} Let $F_1\rightarrow F_0 \rightarrow M \rightarrow 0$ be the minimal free presentation of $M$ and consider the exact sequence
$0\rightarrow M^* \rightarrow F_0^*\rightarrow \lambda M \rightarrow 0$. By applying $-\otimes_RM$ we get the exact sequence
\begin{equation}\tag{\ref{depth}.1}
0\rightarrow \Tor^R_1(M,\lambda M)\rightarrow M\otimes_RM^*\rightarrow M\otimes F_0^*.
\end{equation}
Now if $\Tor^R_1(M,\lambda M)=0$ then by Proposition \ref{CR}, $\Ann_RM$ is a geometrically linked ideal of $R$ and the exact sequence \ref{depth}.1
implies that $\Ass_R(M\otimes_RM^*)\subseteq \Ass_R(M)$.

Set $\fp = \Ann_RM$. Since $\fp$ is geometrically linked to $(0:_R\fp)$, one has $\fp \cap (0:_R\fp)=0$ and therefore $(0:_R\fp)\nsubseteq \fp$. Hence $\fp R_{\fp}=0$ and so $R_{\fp}$ is a field. Thus $M_{\fp}$ is a free $R_{\fp}$-module. Hence by \ref{P1-1} $\Supp_R(\Tor^R_1(M,\lambda M))\cap\Ass(R)=\emptyset$. By the exact sequence \ref{depth}.1 we have $\Ass_R(\Tor^R_1(M,\lambda M))\subseteq \Ass_R(M\otimes_RM^*)$ and since $\Ass_R(M\otimes_RM^*)\subseteq \Ass_R(M) =\{\fp\} \subset \Ass R$ so $\Ass_R(\Tor^R_1(M,\lambda M))=\emptyset$. Thus $\Tor^1_R(M,\lambda M)=0$.
\end{proof}

\begin{proof}[Proof of Theorem \ref{Int2}] Since $\fp \in \Ass(S)$, one has $\dim R = \dim S $. Therefore from the natural isomorphisms
\[\begin{array}{rl}\
M\otimes_R\Hom_R(M,\omega_R)&\cong M\otimes_S \Hom_R(M,\omega_R)\\
&\cong M\otimes_S \Hom_R(M,\Hom_S(R,S))\\
&\cong M\otimes_S\Hom_S(M\otimes_RR,S)\\
&\cong M\otimes_S\Hom_S(M,S),
\end{array}\]
we see that $\Ass_S(M\otimes_S\Hom_S(M,S))=\{\fp\}$. Since $M$ is a stable $S$-module, it is horizontally linked as $S$-module; see \ref{MS4}. Denote by $\lambda_SM$, the linkage of $M$ over $S$. Since $\Ass_S(M)=\Ass_S(M\otimes_S\Hom_S(M,S))=\{\fp\}$, we get $\Tor^S_1(M,\lambda_SM)=0$, by \ref{depth}.

Assume $\Ann_S(\Omega_SM)\neq 0$. Since by \ref{fund}, $\Omega_S M\cong \Hom_S(\lambda_SM,S)$, one has $\Ann_S(\lambda_SM)\neq 0$ and as $\Ass_S(M) = \{\fp\}$, $M$ has constant rank on $\Ass_S(M)$. Thus, it follows from Proposition \ref{CR} that $\Ass_S(M)\cap\Ass_S(\lambda_S M)=\emptyset$ and so $M$ is a free $S/\Ann_S(M)$-module, by Proposition \ref{T1}. But as $\Ann_S(M)=\fp$, $M$ is a free $R$--module.
\end{proof}

\begin{cor} Let $R$ be a Cohen-Macaulay local domain with canonical module $\omega_R$. Then $R$ is Gorenstein if and only if $R$ is a homomorphic image of a
reduced Gorenstein local ring $S$ with $\dim S = \dim R$ and $\Ann_S(\Omega_S\omega_R)\neq 0$.
\end{cor}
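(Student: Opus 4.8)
The plan is to deduce the equivalence from Theorem~\ref{Int2}, applied to the canonical module itself: the forward implication is straightforward, while the converse amounts to checking that the given presentation of $R$ satisfies the hypotheses of that theorem with $M=\omega_R$.

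If $R$ is Gorenstein, I would take $S=R$. A Gorenstein local domain is reduced and tautologically Gorenstein with $\dim S=\dim R$; moreover $\omega_R\cong R$ is free, so $\Omega_S\omega_R=0$ and hence $\Ann_S(\Omega_S\omega_R)=S\neq 0$. Thus $R$ is a homomorphic image of a reduced Gorenstein local ring with all the required properties.

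For the converse, assume $R\cong S/\fp$ where $S$ is a reduced Gorenstein local ring, $\dim S=\dim R$, and $\Ann_S(\Omega_S\omega_R)\neq 0$. If $\dim R=0$ then $R$ is an Artinian local domain, i.e.\ a field, and hence Gorenstein; and if $\fp=0$ then $R\cong S$ is a Gorenstein domain. So assume $\dim R\geq 1$ and $\fp\neq 0$. The crucial step is to verify that $\fp$ is a geometrically linked prime ideal of $S$. Since $R$ is a domain, $\fp$ is prime; since $S$ is Cohen-Macaulay (hence unmixed and equidimensional) and $\dim S/\fp=\dim R=\dim S$, the prime $\fp$ is a minimal prime of $S$, so $\fp\in\Ass(S)$ and $\grade_S\fp=0$. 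Let $\fp=\fp_1,\fp_2,\dots,\fp_r$ be the minimal primes of $S$ (finitely many, as $S$ is Noetherian); since $S$ is reduced and $\fp\neq 0$ we have $r\geq 2$ and $\fp_1\cap\cdots\cap\fp_r=0$. Using only that distinct minimal primes are incomparable, a short computation shows that $(0:_S\fp)=\fp_2\cap\cdots\cap\fp_r$ is a non-zero proper ideal and that $(0:_S(0:_S\fp))=\fp$, so $\fp$ is a linked ideal of $S$; moreover $\fp\cap(0:_S\fp)=\fp_1\cap\cdots\cap\fp_r=0$, so $\fp$ is geometrically linked by the implication (iii)$\Rightarrow$(i) of Proposition~\ref{P1}.

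It then remains to apply Theorem~\ref{Int2} with $M=\omega_R$. Over the Cohen-Macaulay local domain $R$, the module $\omega_R$ is maximal Cohen-Macaulay, hence torsion-free, and the homothety morphism gives $\Hom_R(\omega_R,\omega_R)\cong R$, so $\omega_R\otimes_R\Hom_R(\omega_R,\omega_R)\cong\omega_R$ is torsion-free as well. Since $\fp$ is a geometrically linked prime of the Gorenstein ring $S$, $\dim R\geq 1$, and $\Ann_S(\Omega_S\omega_R)\neq 0$ by hypothesis, Theorem~\ref{Int2} yields that $\omega_R$ is a free $R$-module; as $\omega_R$ has rank $1$ over the domain $R$, this forces $\omega_R\cong R$, i.e.\ $R$ is Gorenstein. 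The main obstacle is precisely the verification that $\fp$ is geometrically linked in $S$: this is where reducedness of $S$ is essential---without it $\fp\cap(0:_S\fp)$ need not vanish---and it is exactly the input required to invoke Theorem~\ref{Int2}. The remaining ingredients (torsion-freeness of $\omega_R$, the isomorphism $\Hom_R(\omega_R,\omega_R)\cong R$, and the fact that $\omega_R$ has rank $1$) are standard properties of canonical modules over Cohen-Macaulay local domains.
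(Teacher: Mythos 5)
Your argument is correct and follows essentially the same route as the paper: both directions reduce to Theorem \ref{Int2} applied with $M=\omega_R$ (using $\Hom_R(\omega_R,\omega_R)\cong R$ and torsion-freeness of $\omega_R$), after verifying that the kernel $\fp$ of $S\twoheadrightarrow R$ is a geometrically linked minimal prime of $S$. The only point of divergence is how linkage of $\fp$ is checked: you do it by the direct computation $(0:_S\fp)=\fp_2\cap\cdots\cap\fp_r$ and $(0:_S(0:_S\fp))=\fp$ in the reduced ring $S$, whereas the paper obtains linkage from Proposition \ref{MS4} together with \cite[Lemma 3]{MS} and uses reducedness only to conclude $\fp\cap(0:_S\fp)=0$; both verifications are valid, yours being more self-contained and the paper's shorter given its earlier machinery.
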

\begin{proof} If $R$ is Gorenstein then by setting $S=R$ we are done. Conversely, if $S$ is domain then we must have $S\cong R$ and so there is nothing to prove. Assume that $S$ is not domain and so there is a minimal prime ideal $\fp$ of $S$ such that $R\cong S/\fp$. Hence by \ref{MS4}, $\fp$ is a linked ideal of $S$.
Note that as $S$ is reduced, then $\fp$ is a geometrically linked ideal. Indeed, by setting $I=(0:_S\fp)$ we get $I\cap\fp=0$ because otherwise $S$
has a nilpotent element which is a contradiction. Since $\omega_R\otimes_R\Hom_R(\omega_R,\omega_R)$ is torsion-free, Theorem \ref{Int2} implies that $\omega_R$ is a free $R$-module and so $R$ is Gorenstein.
\end{proof}

\begin{prop}\label{P2} Let $R$ be a generically Gorenstein unmixed local ring, and let $M$ be a horizontally linked $R$--module.
Assume \emph{$\Ann_R(M) \neq 0$} and that \emph{$\Tor^R_1(M,\lambda M)=0$}. If there exists a positive odd integer $n$ such that $\Omega^n M$ has constant rank on \emph{$\Ass_R(\Omega^n M)$}, and the sequence of the Betti numbers $\beta_0(M),\cdots,\beta_{n-1}(M)$ is non-decreasing, then $\Ass_R(M)\cap\Ass_R(\lambda M)=\emptyset$.
\end{prop}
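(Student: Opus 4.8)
The plan is to reduce the assertion to a rank count over the Artinian local rings $R_{\fp}$ at the minimal primes $\fp$ of $R$: I will show that at each minimal prime where $M$ does not vanish, $M_{\fp}$ is in fact free of rank $\beta_0$, which immediately forces $(\lambda M)_{\fp}=0$ there. Since $\Tor^R_1(M,\lambda M)=0$, Lemma \ref{P1-1} shows that $M$ is generically free. Fix $\fp\in\Ass(R)$; as $R$ is unmixed, $\fp$ is minimal, so $R_{\fp}$ is Artinian and $M_{\fp}$ is $R_{\fp}$-free. Write $\beta_i:=\beta_i(M)$. After localizing at $\fp$, the short exact sequences $0\to\Omega^{i+1}M\to R^{\beta_i}\to\Omega^iM\to 0$ coming from a minimal free resolution of $M$ split (their cokernels being free), so induction on $i$ shows that each $(\Omega^iM)_{\fp}$ is $R_{\fp}$-free, and, writing $r_i=\rank(\Omega^iM)_{\fp}$, that $r_{i+1}=\beta_i-r_i$. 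Telescoping and using that $n$ is odd then gives, at every $\fp\in\Ass(R)$,
$$\rank(\Omega^nM)_{\fp}=A-\rank M_{\fp},\qquad\text{where}\quad A:=\sum_{j=0}^{n-1}(-1)^j\beta_j.$$

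Next I would extract two numerical facts. First, since $n$ is odd and $\beta_0\le\dots\le\beta_{n-1}$, the difference $A-\beta_0$ equals $\sum_{i=1}^{(n-1)/2}(\beta_{2i}-\beta_{2i-1})\ge 0$, so $A\ge\beta_0\ge 1$ (we may assume $M\ne 0$, the statement being trivial otherwise). Second, since $\Ann_R(M)\ne 0$, Proposition \ref{CR} shows that $\Ann_R(M)$ is geometrically linked to $(0:_R\Ann_R(M))$, and, exactly as in the proof of that proposition, Lemma \ref{L1} produces a prime $\fq\in\Ass(R)\setminus\Ass_R(M)$. For this $\fq$ we have $M_{\fq}=0$, so the displayed formula gives $\rank(\Omega^nM)_{\fq}=A\ge 1$; since $(\Omega^nM)_{\fq}\ne 0$ and $R_{\fq}$ is Artinian, $\fq\in\Ass_R(\Omega^nM)$, whence the constant value $c$ of the rank of $\Omega^nM$ on $\Ass_R(\Omega^nM)$ equals $A$.

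Now I conclude. Let $\fp\in\Ass_R(M)$, so $\rank M_{\fp}\ge 1$ and hence, by the displayed formula, $\rank(\Omega^nM)_{\fp}=A-\rank M_{\fp}\le A-1<c$. Since $\Omega^nM$ has constant rank $c$ on $\Ass_R(\Omega^nM)$, this forces $\fp\notin\Ass_R(\Omega^nM)$, and as $\fp$ is minimal, $(\Omega^nM)_{\fp}=0$; the displayed formula then gives $\rank M_{\fp}=A$. Combined with $\rank M_{\fp}\le\beta_0$ (because the rank $r_1=\beta_0-r_0$ is $\ge 0$) and $A\ge\beta_0$, we get $\rank M_{\fp}=\beta_0$. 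Finally, $M$ is stable by Proposition \ref{MS3}, so by (\ref{eq1}) there is an exact sequence $0\to M^{\ast}\to F_0^{\ast}\to\lambda M\to 0$ with $F_0^{\ast}\cong R^{\beta_0}$; localizing it at $\fp$ gives an injection $(M_{\fp})^{\ast}\cong R_{\fp}^{\beta_0}\hookrightarrow R_{\fp}^{\beta_0}$, which over the Artinian ring $R_{\fp}$ is necessarily an isomorphism, so $(\lambda M)_{\fp}=0$ and $\fp\notin\Ass_R(\lambda M)$. Hence $\Ass_R(M)\cap\Ass_R(\lambda M)=\emptyset$.

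The main difficulty is the rank bookkeeping rather than any deep input: one has to be sure that the localized resolution sequences split, that the odd parity of $n$ is exactly what turns the monotonicity of the Betti numbers into the inequality $A\ge\beta_0$, and that the phrase ``$\Omega^nM$ has constant rank on $\Ass_R(\Omega^nM)$'' genuinely determines a single value once one minimal prime lying in that set has been produced. Correspondingly, the hypothesis $\Ann_R(M)\ne 0$ is used only to guarantee (via Proposition \ref{CR} and Lemma \ref{L1}) that $\Ass_R(M)$ is a proper subset of $\Ass(R)$, i.e. that the auxiliary prime $\fq$ exists.
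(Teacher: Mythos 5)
Your proof is correct and follows essentially the same route as the paper: generic freeness via Lemma \ref{P1-1}, the existence of $\fq\in\Ass(R)\setminus\Ass_R(M)$ via Proposition \ref{CR} and Lemma \ref{L1}, and the same alternating-sum rank computation at primes of $\Ass(R)$, with the oddness of $n$ and the monotone Betti numbers entering exactly as in the paper. The only (harmless) cosmetic difference is the endgame: you pin down the constant rank as $c=A$ and finish with a length count on $0\to M^{\ast}\to F_0^{\ast}\to\lambda M\to 0$, whereas the paper argues by a dichotomy on $\Ass_R(\Omega^nM)$ and concludes via $(\Omega M)_\fp=0$ together with $\Omega M\cong(\lambda M)^{\ast}$.
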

\begin{proof} Set $\beta_i=\beta_i(M)$.
Let $\cdots \rightarrow F_1 \rightarrow F_0 \rightarrow M \rightarrow 0$ be the minimal free resolution of $M$ and consider the exact sequence
$$0\rightarrow \Omega^nM \rightarrow F_{n-1} \rightarrow \cdots \rightarrow F_0 \rightarrow M \rightarrow 0.$$
Note that by \ref{CR}, $\Ass(R) \setminus \Ass_R(M)\neq \emptyset$. Choose $\fp,\ \fq \in \Ass(R)$ such that $\fp \in \Ass_R(M)$ and $\fq \in \Ass(R) \setminus \Ass_R(M)$. Localizing the last exact sequence at $\fp$ and $\fq$, by \ref{P1-1} we get the equations
\begin{equation}\tag{\ref{P2}.1}
\rank_{R_\fp} (\Omega^nM)_{\fp} = \sum^{n-1}_{i=0}(-1)^i \beta_i-\rank_{R_\fp}M_\fp
\end{equation}
and
\begin{equation}\tag{\ref{P2}.2}
\rank_{R_\fq} (\Omega^nM)_{\fq} = \sum^{n-1}_{i=0}(-1)^i \beta_i.
\end{equation}
Therefore, as $\Omega^n M$ has constant rank on \emph{$\Ass_R(\Omega^n M)$}, we must have either $\Ass_R(\Omega^nM) \subseteq \Ass_R(M)$ or $\Ass_R(M)\cap \Ass_R(\Omega^nM) = \emptyset$. If  $\Ass_R(\Omega^nM) \subseteq \Ass_R(M)$ then $(\Omega^nM)_{\fq}=0$ and \ref{P2}.2 implies that
$ \sum^{n-2}_{i=0}(-1)^{i-1}\beta_i=\beta_{n-1}\geq\beta_{n-2}$. Canceling $\beta_{n-2}$ and continuing in this way,
we eventually obtain $\beta_0\leq 0$ which is impossible.

Hence $\Ass_R(M)\cap \Ass_R(\Omega^nM) = \emptyset$ and by localizing the exact sequence
$$0\rightarrow \Omega^nM \rightarrow F_{n-1} \rightarrow \cdots \rightarrow F_1 \rightarrow \Omega M \rightarrow 0$$
at any $\fp \in \Ass_R(M)$ and noting that $\beta_i$ is non-decreasing, we get $\rank_{R_\fp} (\Omega M)_\fp = \sum^{n-1}_{i=1}(-1)^{i-1}\beta_i\leq 0$.
It follows that $(\Omega M)_\fp =0$, for all $\fp \in \Ass_R(M)$. As $\Omega M \cong (\lambda M)^*$, by \ref{fund}, so $\Ass_R(M) \cap \Ass_R(\lambda M) = \emptyset$.
\end{proof}

\section{One-dimensional gorenstein rings}

In this section we give an example of a horizontally linked module $M$ over a one-dimensional Gorenstein local ring $R$ such that $\Tor^R_1(M,\lambda M)=0$,
$\lambda M$ has constant rank on $\Ass_R(\lambda M)$, but $\Ass_R(M)\cap\Ass_R(\lambda M)\neq\emptyset$. First we prove a duality between $\Ext^*_R(M,M)$ and $\Tor^R_*(M,\lambda M)$; see \ref{p3}, which has its own interest.

\begin{lem}\label{PL1} Let $(R,\fm)$ be a Cohen-Macaulay local ring of dimension $d\geq 1$, admitting a canonical module $\omega_R$. Let $M$ and $N$ be maximal
Cohen-Macaulay $R$--modules such that $\Tor^R_i(M,N)$ is of finite length, for all $i\geq 1$. Set \emph{$(-)^\vee = \Hom_R(-,E(R/\fm))$} and
\emph{$(-)^\dag = \Hom_R(-,\omega_R)$}. Then \emph{$\Ext^{i+d}_R(M,N^\dag)\cong \Tor^R_i(M,N)^\vee$}, for all $i\geq 1$.
\end{lem}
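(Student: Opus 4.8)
The plan is to use local duality in the form $H^j_{\fm}(-)^{\vee}\cong\Ext^{d-j}_R(-,\omega_R)$, together with the fact that tensoring by a maximal Cohen-Macaulay module (up to syzygies) computes $\Tor$, to convert the finite-length $\Tor$ modules into $\Ext$ modules with a dimension shift. First I would fix a minimal free resolution $F_\bullet\to M$ and note that, since $N$ is maximal Cohen-Macaulay, $\Tor^R_i(M,N)=H_i(F_\bullet\otimes_R N)$ is a subquotient of finitely generated modules; by hypothesis it has finite length for $i\geq 1$, so in particular it is $\fm$-torsion. Hence $H^0_{\fm}(\Tor^R_i(M,N))=\Tor^R_i(M,N)$ and $H^j_{\fm}(\Tor^R_i(M,N))=0$ for $j>0$, which already pins down one side of local duality: $\Tor^R_i(M,N)^{\vee}\cong\Ext^d_R(\Tor^R_i(M,N),\omega_R)$ for all $i\geq 1$.

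Next I would relate $\Ext^d_R(\Tor^R_i(M,N),\omega_R)$ to $\Ext^{i+d}_R(M,N^{\dag})$. The natural device is the standard isomorphism $\Rhom_R(M,\Rhom_R(N,\omega_R))\simeq\Rhom_R(M\utp N,\omega_R)$, i.e. $\Rhom_R(M,N^{\dag})\simeq\Rhom_R(M\utp N,\omega_R)$ since $N$ is maximal Cohen-Macaulay (so $N^{\dag}=\Rhom_R(N,\omega_R)$). Taking cohomology, one gets a spectral sequence $\Ext^p_R(\Tor^R_q(M,N),\omega_R)\Rightarrow\Ext^{p+q}_R(M,N^{\dag})$. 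Because $\omega_R$ has injective dimension $d$, the $E_2$-page is concentrated in $0\leq p\leq d$; and because $\Tor^R_q(M,N)$ has finite length for $q\geq 1$, for those $q$ the only nonzero column is $p=d$. So for the total degree $n=i+d$ with $i\geq 1$, the only contribution along the line $p+q=n$ with $q\geq 1$ comes from $(p,q)=(d,i)$, and one must separately control the $q=0$ term $\Ext^{i+d}_R(\Tor^R_0(M,N),\omega_R)=\Ext^{i+d}_R(M\otimes_R N,\omega_R)$. Since $M\otimes_R N$ need not have finite length, I would argue that $\Ext^{i+d}_R(M\otimes_R N,\omega_R)=0$ for $i\geq 1$ using $\id_R\omega_R=d$, so that entry vanishes above cohomological degree $d$. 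With the $q=0$ corner killed and all higher $p$ killed by injective dimension, the spectral sequence degenerates enough to give $\Ext^{i+d}_R(M,N^{\dag})\cong\Ext^d_R(\Tor^R_i(M,N),\omega_R)$ for $i\geq1$, and combining with the local duality identification above yields $\Ext^{i+d}_R(M,N^{\dag})\cong\Tor^R_i(M,N)^{\vee}$.

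Alternatively, to avoid spectral-sequence bookkeeping one can run a cleaner resolution-theoretic argument: resolve $M$ by frees and dualize, or use a complete resolution / the fact that over a Cohen-Macaulay ring MCM modules admit nice approximations, to reduce to the case where the relevant $\Tor$ is the homology of $F_\bullet\otimes_R N$ in a range and apply $\Hom_R(-,\omega_R)$ degreewise. I expect the main obstacle to be precisely the control of the $q=0$ corner — the term coming from $M\otimes_R N$ itself, which is the only piece not automatically of finite length — and making sure the dimension shift is exactly $d$ (not off by one) when passing between $H^0_{\fm}$, local duality, and the injective dimension of $\omega_R$. Once that degree bookkeeping is nailed down, the rest is formal.
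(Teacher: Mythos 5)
Your proposal is correct and follows essentially the same route as the paper: the change-of-rings (Hom--tensor adjunction) spectral sequence $\Ext^p_R(\Tor^R_q(M,N),\omega_R)\Rightarrow\Ext^{p+q}_R(M,N^\dag)$, which degenerates because the $\Tor$'s are of finite length for $q\geq 1$ (so only the column $p=d$ survives there) and $\id_R\omega_R=d$ kills the $q=0$ row above degree $d$, followed by local duality to identify $\Ext^d_R(\Tor^R_i(M,N),\omega_R)$ with $\Tor^R_i(M,N)^\vee$. Your explicit handling of the $q=0$ corner and the degree bookkeeping matches the paper's argument, so no changes are needed.
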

\begin{proof} There is a third quadrant spectral sequence $$\E^{p,q}_2=\Ext^p_R(\Tor^R_q(M,N),\omega_R)\underset{p}\Longrightarrow \Ext^{p+q}_R(M,N^\dag).$$
As $\Tor^R_q(M,N)$ is of finite length, for all $q\geq 1$, it follows that $\Ext^p_R(\Tor^R_q(M,N),\omega_R)=0$, for all $p\neq d$ and all $q\geq 1$. Therefore
$$\E^{p,q}_2= \left\lbrace
           \begin{array}{c l}
              \Ext^p_R(M\otimes_RN,\omega_R)\ \ & \text{ \ \ $0\leq p\leq d$,}\\
              \Ext^d_R(\Tor^R_q(M,N),\omega_R)\ \   & \text{   \ \ $q>0$,}\\
              0\ \   & \text{  \ \ $\textrm{otherwise}$.}
           \end{array}
        \right.$$\\
Therefore, as the map $d^{p,q}_r:\E^{p,q}_r\rightarrow \E^{p+2,q-1}_r$ is of bidegree $(r,1-r)$, for all $r\geq 2$, one can easily verify that that $d^{p,q}_r =0$, for all $p,q$, and $r$. Hence $\E^{p,q}_2=\E^{p,q}_{\infty}$, for all $p$ and $q$ and so $\Ext^{q+d}_R(M,N^\dag)\cong \Ext^d_R(\Tor^R_q(M,N),\omega_R)$ for all $q>0$. Now, the Local Duality Theorem (see \cite[Theorem 3.5.8]{BH}) implies that $\Ext^{d}_R(\Tor^R_q(M,N),\omega_R)\cong \Gamma_{\fm}(\Tor^R_q(M,N))^\vee =\Tor^R_q(M,N)^\vee$.
\end{proof}

\begin{prop}\label{p3} Let $(R,\fm)$ be a Gorenstein local ring of dimension one, and let $M$ be a maximal Cohen-Macaulay $R$--module.
Assume either $\Tor^R_i(M,\lambda M)$ is of finite length, for all $i\geq 1$, or \emph{$\Tor^R_1(M,\lambda M)=0$}.
Then for all $i>0$, \emph{$$\Ext^i_R(M,M)\cong \Hom_R(\Tor^R_i(M,\lambda M),E(R/\fm)).$$}
\end{prop}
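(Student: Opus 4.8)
The plan is to combine the duality of Lemma~\ref{PL1} (applied with the canonical module $\omega_R=R$) with a dimension shift and the identity $\Omega M\cong(\lambda M)^*$ of Proposition~\ref{fund}. First I would reduce to the case that $M$ is stable. If $M\cong M'\oplus R^a$ with $M'$ stable, then $\lambda M=\lambda M'$, and for every $i>0$ one has $\Ext^i_R(M,M)\cong\Ext^i_R(M',M')$ and $\Tor^R_i(M,\lambda M)\cong\Tor^R_i(M',\lambda M')$; indeed $M'$, being a maximal Cohen-Macaulay module over the Gorenstein ring $R$, is totally reflexive, so $\Ext^j_R(M',R)=0$ for all $j>0$ and the free summand $R^a$ contributes nothing. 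If $M$ is free, both sides of the asserted isomorphism vanish, so we may assume $M$ is stable and not free; then $\lambda M\neq0$. Since $\lambda M=\Omega\Tr M$ is a syzygy module it is torsion-free, hence, as $\dim R=1$, maximal Cohen-Macaulay, and therefore totally reflexive. Moreover $M$, being totally reflexive, is a syzygy module, so by Theorem~\ref{MS2} it is horizontally linked, and then $\Omega M\cong(\lambda M)^*$ by Proposition~\ref{fund}.

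Next I would dispose of the hypothesis $\Tor^R_1(M,\lambda M)=0$ by showing that it forces $\Tor^R_i(M,\lambda M)$ to be of finite length for every $i\geq1$, which reduces this case to the other one. Let $\fp$ be a minimal prime of $R$; as $\dim R=1$, this is the same as any prime of $R$ other than $\fm$. Then $R_\fp$ is an Artinian Gorenstein local ring, localization gives $\Tor^{R_\fp}_1(M_\fp,(\lambda M)_\fp)=0$, and $(\lambda M)_\fp$ is stably isomorphic to $\lambda_{R_\fp}M_\fp$, so $\Tor^{R_\fp}_1(M_\fp,\lambda_{R_\fp}M_\fp)=0$; the argument in the proof of Lemma~\ref{P1-1} then shows that $M_\fp$ is a free $R_\fp$-module, and hence $\Tor^{R_\fp}_i(M_\fp,(\lambda M)_\fp)=0$ for all $i\geq1$. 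Consequently $\Supp_R(\Tor^R_i(M,\lambda M))\subseteq\{\fm\}$ for all $i\geq1$, so each of these modules has finite length.

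It remains to treat the case in which $\Tor^R_i(M,\lambda M)$ has finite length for all $i\geq1$. Here $M$ and $\lambda M$ are maximal Cohen-Macaulay and $\Tor^R_i(M,\lambda M)$ has finite length for $i\geq1$, so Lemma~\ref{PL1} with $\omega_R=R$ (so $d=1$ and $N^\dag=(\lambda M)^*$ for $N=\lambda M$) gives $\Ext^{i+1}_R(M,(\lambda M)^*)\cong\Hom_R(\Tor^R_i(M,\lambda M),E(R/\fm))$ for all $i\geq1$. On the other hand, applying $\Hom_R(M,-)$ to $0\to\Omega M\to F\to M\to0$ with $F$ free, and using that $\Ext^j_R(M,R)=0$ for $j>0$, yields $\Ext^i_R(M,M)\cong\Ext^{i+1}_R(M,\Omega M)$ for all $i\geq1$, and the identification $\Omega M\cong(\lambda M)^*$ turns the right-hand side into $\Ext^{i+1}_R(M,(\lambda M)^*)$. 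Composing these isomorphisms gives $\Ext^i_R(M,M)\cong\Hom_R(\Tor^R_i(M,\lambda M),E(R/\fm))$ for all $i>0$. The only genuinely delicate step is the reduction in the second paragraph, from $\Tor^R_1(M,\lambda M)=0$ to the finite-length hypothesis; the rest is bookkeeping with totally reflexive modules and stable isomorphisms.
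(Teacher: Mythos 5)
Your proposal is correct and follows essentially the same route as the paper: reduce the hypothesis $\Tor^R_1(M,\lambda M)=0$ to the finite-length case via (the argument of) Lemma~\ref{P1-1}, then combine Lemma~\ref{PL1} with $\omega_R=R$, the identification $(\lambda M)^*\cong\Omega M$ up to free summands from Proposition~\ref{fund}, and the dimension shift $\Ext^i_R(M,M)\cong\Ext^{i+1}_R(M,\Omega M)$. Your extra reduction to the stable case and the verification that $\lambda M$ is maximal Cohen--Macaulay merely make explicit details the paper leaves implicit.
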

\begin{proof} Note that if $\Tor^R_1(M,\lambda M)=0$ then $M$ is locally free on the punctured spectrum of $R$, by \ref{P1-1}, and so $\Tor^R_i(M,\lambda M)$ is of finite length, for all $i\geq 1$. As $R$ is Gorenstein, then we have $(\lambda M)^\dag \cong(\lambda M)^*$ and $(\lambda M)^*$ is stably isomorphic to $\Omega M$. Thus we get $\Ext^{i+1}_R(M,(\lambda M)^*)\cong \Ext^i_R(M,M)$, for all $i>0$. Now the result follows by Lemma \ref{PL1}.
\end{proof}

\begin{cor}\label{Cor2} Let $R$ be a Gorenstein local ring of dimension one and $M$ be a horizontally linked $R$-module with $\Tor^R_1(M,\lambda M)=0$.
Then for all $n\geq 0$ and all $i\geq 1$, $\Tor^R_i(M,\lambda M)\cong\Tor^R_i(\Omega^nM,\lambda\Omega^nM)$. In particular, $\Tor^R_1(M,\lambda M)=0$ if and only if $\Tor^R_1(\Omega^nM,\lambda\Omega^nM)=0$, for some (and hence all) $n\geq 0$.
\end{cor}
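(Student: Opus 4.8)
The plan is to funnel everything through Proposition \ref{p3}, which already identifies $\Tor^R_i(M,\lambda M)$ with the Matlis dual of $\Ext^i_R(M,M)$, and then to use that the groups $\Ext^j_R(-,-)$ with $j\ge1$ are unchanged when a maximal Cohen--Macaulay module over a Gorenstein ring is replaced by one of its syzygies. First I would dispose of the trivial case $M=0$ and record the standing reductions for $M\ne0$: being horizontally linked, $M$ is a nonzero stable syzygy module by Theorem \ref{MS2}, hence torsion-free, hence maximal Cohen--Macaulay since $\dim R=1$ and $R$ is Cohen--Macaulay; moreover $\pd_RM=\infty$ (a nonzero stable module of finite projective dimension over a one-dimensional Gorenstein ring would be free by Auslander--Buchsbaum), so each $\Omega^nM$ is again a nonzero syzygy module, hence maximal Cohen--Macaulay.

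Next I would assemble the two technical inputs. The first is a finite-length statement: since $\Tor^R_1(M,\lambda M)=0$, Lemma \ref{P1-1} gives that $M$ is generically free, so $M_\fp$ is free for every $\fp\in\Ass R=\Min R$, that is, for every non-maximal prime $\fp$; localizing a minimal free resolution of $M$ at such a $\fp$, each resulting short exact sequence splits because its right-hand term is a free $R_\fp$-module (by induction, starting from the freeness of $M_\fp$), so $(\Omega^nM)_\fp$ is free for all $n\ge0$. Hence, for every finitely generated $R$-module $L$ and every $i\ge1$, the modules $\Tor^R_i(M,L)$ and $\Tor^R_i(\Omega^nM,L)$ are supported only at the maximal ideal and so have finite length; in particular $\Tor^R_i(M,\lambda M)$ and $\Tor^R_i(\Omega^nM,\lambda\Omega^nM)$ have finite length for all $i\ge1$. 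The second input is the identity $\Ext^i_R(M,M)\cong\Ext^i_R(\Omega^nM,\Omega^nM)$ for all $i\ge1$: dimension shifting in the first argument gives $\Ext^i_R(\Omega^nM,\Omega^nM)\cong\Ext^{i+n}_R(M,\Omega^nM)$, and applying $\Hom_R(M,-)$ to the syzygy sequences $0\to\Omega^jM\to F_{j-1}\to\Omega^{j-1}M\to0$ and using $\Ext^{\ge1}_R(M,R)=0$ (valid since $M$ is maximal Cohen--Macaulay over the Gorenstein ring $R$) gives $\Ext^{i+n}_R(M,\Omega^nM)\cong\Ext^i_R(M,M)$, both for $i\ge1$.

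Finally I would put the pieces together. Because $M$ and $\Omega^nM$ are maximal Cohen--Macaulay and the relevant $\Tor$'s have finite length, Proposition \ref{p3} applies to both and gives, for $i\ge1$,
$$\Ext^i_R(M,M)\cong\Tor^R_i(M,\lambda M)^{\vee},\qquad\Ext^i_R(\Omega^nM,\Omega^nM)\cong\Tor^R_i(\Omega^nM,\lambda\Omega^nM)^{\vee},$$
where $(-)^{\vee}=\Hom_R(-,E(R/\fm))$; combining these with the identity of the previous paragraph and applying $(-)^{\vee}$ once more, and using that Matlis duality is an involution on finite-length modules, yields $\Tor^R_i(M,\lambda M)\cong\Tor^R_i(\Omega^nM,\lambda\Omega^nM)$ for all $i\ge1$ and $n\ge0$. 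The final assertion of the corollary is the case $i=1$, together with the remark that the vanishing of either $\Tor^R_1(M,\lambda M)$ or $\Tor^R_1(\Omega^nM,\lambda\Omega^nM)$ alone already forces---by the argument of the previous paragraph---all the $\Tor$'s in sight to have finite length, so that the displayed isomorphisms remain available. The step I expect to be the main nuisance is the finite-length bookkeeping: one must verify that Proposition \ref{p3} is genuinely applicable to every $\Omega^nM$, and carry out the $\Ext$-shifting purely homologically, since $\Omega^nM$ need not be stable and so the module $\lambda\Omega^nM$ should not be analyzed through horizontal linkage of $\Omega^nM$ directly.
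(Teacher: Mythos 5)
Your proposal is correct and follows essentially the same route as the paper: the paper's (two-line) proof simply invokes the isomorphism $\Ext^i_R(M,M)\cong\Ext^i_R(\Omega^nM,\Omega^nM)$ for $i\geq 1$ and then Proposition \ref{p3} applied to both $M$ and $\Omega^nM$. Your additional verifications (maximal Cohen--Macaulayness of the syzygies, generic freeness via Lemma \ref{P1-1} giving the finite-length hypothesis for $\Omega^nM$, and the Matlis-duality bookkeeping) are exactly the details the paper leaves implicit.
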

\begin{proof} For each $n\geq 0$ and $i \geq 1$, one has $\Ext^i_R(M,M)\cong\Ext^i_R(\Omega^nM,\Omega^nM)$. Thus the result follows from Proposition \ref{p3}.
\end{proof}

In the following example, we use Macaulay 2 for the computations.

\begin{exam}\label{Ex2} \emph{Let $k$ be a field and $R=k[[x,y,z,t]]/(xy,yz,zt,xt+yt+t^2,x^2+xz+xt)$, which is a Gorenstein local ring of dimension one.
Let $I=(x,z)$ and $J=(y)$. Then $R/I$ is horizontally linked to $R/J$. One has $\Ass(R)=\{(z,y,x+t),(z,x,y+t),(x,z,t),(t,y,x),(t,y,x+z)\}$, $\Ass_R(R/I)=\{(z,x,y+t),(x,z,t)\}$ and $\Ass_R(R/J)=\{(z,y,x+t),(t,y,x),(t,y,x+z)\}$. Hence $I$ is geometrically linked to $J$ and so
$\Tor^R_1(R/I,R/J)=0$. Set $M = \lambda \Omega (R/I) = \lambda I$. Then $M$ is horizontally linked and $\lambda M \cong I$.
Hence $\Ann_R(\lambda M)=J$ and note that $\lambda M$ is not free $R/J$-module.  By the last corollary we have $\Tor^R_1(M,\lambda M)=0$.
Also note that $\lambda M$ has constant rank equal to one on $\Ass_R(\lambda M)=\Ass_R(R/J)$. Therefore we must have $\Ann_R(M)=0$ because
otherwise $\lambda M$ would be a free $R/J$-module, by \ref{CR}. Therefore $\Ass_R(M)\cap\Ass_R(\lambda M)\neq \emptyset$.}
\end{exam}

Recall that a \emph{complete resolution} of a finitely generated module $M$ over a local ring $(R,\fm)$ is a diagram $\emph{\textbf{T}}\overset{\nu}\longrightarrow \emph{\textbf{F}}\overset{\pi}\longrightarrow M$, where $\emph{\textbf{F}}$ is a free resolution of $M$, and
$\emph{\textbf{T}}$ is an exact complex of finitely generated free modules $$\emph{\textbf{T}}:\cdots\longrightarrow T_{i+1}\overset{d_{i+1}}\longrightarrow T_i\overset{d_i}\longrightarrow T_{i-1}\longrightarrow\cdots,$$ such that $\Hom_R(\emph{\textbf{T}},R)$ is exact, $\nu$ is a morphism of complexes, and $\nu_i$ is bijection for all $i\gg0$. A complete resolution $\emph{\textbf{T}}$ of $M$ is called \emph{minimal} if $\Im(d_i)\subseteq \fm T_{i-1}$, for all $i \in \mathbb{Z}$. It is well known that if $R$ is Gorenstein, then every finitely generated $R$-module admits a minimal complete resolution. In particular, if $M$ is maximal Cohen-Macaulay, then $M\cong \Im(d_0)$.

\begin{thm}Let $R$ be a Gorenstein local domain of dimension one and let $M$ be a torsion-free $R$-module such that $M\otimes_RM^*$ also is a torsion-free $R$-module. Assume $R\cong S/\fp$, where $S$ is a Gorenstein local ring and $\fp$ is a geometrically linked prime ideal of $S$. Let ${\textbf{T}}$ be a minimal complete resolution of $M$ over $S$. If $\Ann_S(\Omega^{-1}_SM)\neq 0$, then $M$ is a free $R$-module.
\end{thm}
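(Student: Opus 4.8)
The plan is to deduce the statement from Theorem \ref{Int2}, applied not to $M$ itself but to its dual $M^*:=\Hom_R(M,R)$, exploiting that $R$ is Gorenstein (so that $\omega_R\cong R$) and that the cosyzygy $\Omega^{-1}_SM$ is, up to $S$-duality, a syzygy of $M^*$.

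First I would record the standing facts. Since $\fp$ is a geometrically linked prime ideal, $S/\fp$ is horizontally linked over $S$, hence a syzygy module, so $\fp\in\Ass(S)$; as $S$ is Gorenstein, hence equidimensional and unmixed, this forces $\dim S=\dim(S/\fp)=\dim R=1$. Because $M$ is torsion-free over the one-dimensional domain $R$, we have $\depth_SM=\depth_RM\ge1=\dim S$, so $M$ is maximal Cohen-Macaulay, hence totally reflexive, over the one-dimensional Gorenstein ring $S$; moreover $M$ has no nonzero free $S$-summand (such a summand would not be annihilated by $\fp$), so $M$ is stable over $S$. The module $M^*$ is again an $R$-module, since $\fp$ annihilates $M$ and therefore also $\Hom_S(M,S)\cong M^*$, and it is torsion-free over $R$. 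Finally, from $\fp\in\Ass(S)$ and $\omega_S\cong S$ one gets $\omega_R\cong\Hom_S(R,S)$, whence the adjunction isomorphism together with $\omega_R\cong R$ gives $M^*\cong\Hom_R(M,\omega_R)\cong\Hom_S(M,S)$, and $\Hom_R(M^*,R)\cong M^{**}\cong M$ by reflexivity of maximal Cohen-Macaulay modules over the Gorenstein ring $R$.

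Next I would check that $M^*$ satisfies the hypotheses of Theorem \ref{Int2} for the same presentation $R\cong S/\fp$. It is a torsion-free $R$-module, and since $\Hom_R(M^*,\omega_R)\cong\Hom_R(M^*,R)\cong M$ we obtain $M^*\otimes_R\Hom_R(M^*,\omega_R)\cong M^*\otimes_RM\cong M\otimes_RM^*$, which is torsion-free by assumption. The remaining point is that $\Ann_S(\Omega_S(M^*))\ne0$. For this I would use that dualizing the minimal complete resolution $\mathbf{T}$ over the Gorenstein ring $S$ yields a minimal complete resolution of $\Hom_S(M,S)\cong M^*$, so that $\Omega^{-1}_SM\cong\Hom_S(\Omega_S(M^*),S)$. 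Since $\Omega_S(M^*)$ is a syzygy of the totally reflexive module $M^*$, it is itself totally reflexive, and therefore $\Ann_S(\Omega_S(M^*))=\Ann_S(\Hom_S(\Omega_S(M^*),S))=\Ann_S(\Omega^{-1}_SM)$, which is nonzero by hypothesis.

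With all hypotheses verified, Theorem \ref{Int2} applied to $M^*$ shows that $M^*$ is a free $R$-module, and then $M\cong M^{**}=\Hom_R(M^*,R)$ is the $R$-dual of a free module, hence free, as desired. I expect the main obstacle to be the identification $\Omega^{-1}_SM\cong\Hom_S(\Omega_S(M^*),S)$, that is, correctly matching the cosyzygy coming from the prescribed minimal complete resolution with the $S$-dual of an honest syzygy of $M^*$, together with the observation that $\Ann_S$ is unchanged under $S$-duality on totally reflexive modules (so that the possibly non-faithful module $\Omega_S(M^*)$ shares its annihilator with its dual); the rest is a routine transcription of the hypotheses into the setting of Theorem \ref{Int2}.
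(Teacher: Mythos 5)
Your proof is correct, but it follows a genuinely different route from the paper's. The paper argues directly on the cosyzygy $N=\Omega^{-1}_SM$: since $\Ann_S(N)\neq 0$, $N$ is stable and hence horizontally linked over $S$ (Theorem \ref{MS2}); the vanishing $\Tor^S_1(M,\lambda_SM)=0$ obtained in the proof of Theorem \ref{Int2} is transferred to $\Tor^S_1(N,\lambda_SN)=0$ via Corollary \ref{Cor2} (which rests on the duality of Proposition \ref{p3}), because $M\cong\Omega_SN$; then Proposition \ref{P2} with $n=1$ (using that $\Omega_SN\cong M$ has constant rank on $\Ass_S(M)=\{\fp\}$) gives $\Ass_S(N)\cap\Ass_S(\lambda_SN)=\emptyset$, Proposition \ref{T1} makes $N$ and $\lambda_SN$ free over the quotients by their annihilators, and finally $M\cong\Hom_S(\lambda_SN,S)$ is identified as a direct sum of copies of $\Hom_S(S/\fp,S)\cong R$. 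You instead apply Theorem \ref{Int2} as a black box to $M^*$: the hypotheses transfer because $\omega_R\cong R$ and $M\cong M^{**}$ (maximal Cohen--Macaulay over the one-dimensional Gorenstein domain $R$), so $M^*\otimes_R\Hom_R(M^*,\omega_R)\cong M\otimes_RM^*$; and the key input $\Ann_S(\Omega_S(M^*))\neq 0$ comes from dualizing the minimal complete resolution, which is legitimate since $\Ext^1_S(\Omega^{-1}_SM,S)=0$ makes $0\to(\Omega^{-1}_SM)^*\to T_{-1}^*\to M^*\to 0$ exact, minimality passes to the transposed matrices, and reflexive (totally reflexive) $S$-modules share annihilators with their $S$-duals. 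Your reduction is shorter and bypasses Corollary \ref{Cor2} and Proposition \ref{P2} entirely, at the cost of the complete-resolution duality bookkeeping; the paper's route stays inside its linkage machinery and extracts more structure along the way, namely $\Ann_S(\lambda_SN)=\fp$ and the explicit decomposition of $M$ into copies of $\Hom_S(S/\fp,S)$.
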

\begin{proof} As we saw in the proof of \ref{Int2}, $M$ is horizontally linked over $S$ and one has $\Tor^S_1(M,\lambda_S M)=0$. Set $N=\Omega^{-1}_SM$. Since $\Ann_S(N)\neq 0$, $N$ is an stable $S$-module and so it is horizontally linked $S$-module; see \ref{MS2}. Note that as $M\cong \Omega_SN$, one has $\Tor^S_1(N,\lambda_S N)=0$, by \ref{Cor2}. Also, note that since $\Ass_S(M)=\{\fp\}$, $M$ has constant rank on $\Ass_S(M)$. Therefore, by \ref{P2} and \ref{T1}, $N$ is free over $S/\Ann_S(N)$ and $\lambda_S N$ is free over $S/\Ann_S(\lambda_S N)$. As $M\cong \Hom_S(\lambda_SN,S)$, it follows that $\Ann_S(\lambda_S N)=\fp$, and $M$ is isomorphic to a direct sum of copies of $\Hom_S(S/\fp,S)$. But, since $R$ and $S$ are Gorenstein, $\Hom_S(S/\fp,S)\cong  R$. Hence $M$ is a free $R$-module.
\end{proof}

\section{Modules horizontally linked to the syzygies of the residue field}

As mentioned before, a module is horizontally linked if and only if it is stable and a syzygy module. Therefore for an $R$-modules $M$ and each $n\geq 1$, $\Omega^nM$ is horizontally linked if and only if $\Omega^nM$ is stable. In this section we are interested in the properties of modules that are horizontally linked to the syzygies of the residue field. We start with the following lemma that we need for the rest of the paper.

\begin{lem}\label{stbl} Let $R$ be a local ring and let $M$ be a non-zero finitely generated $R$-module.
\begin{enumerate}[\rm(i)]
\item If $M = F\oplus N$ with $F$ a free $R$-module, then $\lambda M \cong \lambda N$.
\item If $M$ is non-free and a syzygy module, then $\lambda M$ is stable.
\end{enumerate}
\end{lem}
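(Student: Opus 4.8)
The plan is to prove both parts directly from the characterization of horizontal linkage (Theorem \ref{MS2}) together with the exact sequence \eqref{eq1} defining the transpose, handling each part separately.

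For part (i), first I would observe that if $M = F \oplus N$ with $F$ free, then a minimal free presentation of $M$ is obtained by taking the direct sum of a minimal free presentation of $N$ with the trivial presentation $0 \to F \xrightarrow{\;=\;} F \to 0$ of $F$ — provided $N$ itself has no free summand, but in any case the transpose is additive and kills free summands, so $\Tr M \cong \Tr N$ up to free summands; more precisely $\Tr(F \oplus N) \cong \Tr N$ since $\Tr F = 0$. Applying $\Omega$ then gives $\lambda M = \Omega \Tr M \cong \Omega \Tr N = \lambda N$. The only mild care needed is that $\Tr$ is only well-defined up to free summands (it depends on the choice of minimal presentation unless the module is stable), so I would phrase the argument in terms of the additivity of $\Tr$ on the stable category, or simply note that $\Omega$ of anything stably isomorphic to $\Tr N$ agrees with $\Omega \Tr N$ up to free summands and that $\lambda N$ is stable by Proposition \ref{MS3} when $N$ is horizontally linked; alternatively, just compute directly with presentations. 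This part is essentially bookkeeping.

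For part (ii), suppose $M$ is non-free and a syzygy module. By Theorem \ref{MS2}, to show $\lambda M$ is stable it suffices — once we know $\lambda M$ is itself a non-free syzygy module and stable-or-not — actually the cleanest route is: $M$ being a non-free syzygy module means $M$ has a stable part; write $M \cong F \oplus M_0$ with $F$ free and $M_0$ stable. Then $M_0$ is non-zero (else $M$ is free), $M_0$ is a syzygy module (a direct summand of a syzygy module, hence embeds in a free module), and by Theorem \ref{MS2} $M_0$ is horizontally linked. By part (i), $\lambda M \cong \lambda M_0$, and by Proposition \ref{MS3} applied to the horizontally linked module $M_0$, both $M_0$ and $\lambda M_0$ are stable. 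Hence $\lambda M \cong \lambda M_0$ is stable.

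The main obstacle I anticipate is nothing deep but rather keeping the "up to free summand" ambiguities straight: the transpose and the operator $\lambda$ are only well-defined on isomorphism classes when applied to stable modules, so I would be careful to reduce to the stable part of $M$ before invoking Proposition \ref{MS3}, and to note explicitly that $\lambda M_0 \neq 0$ is not needed — stability of $0$ holds vacuously, though in fact $\lambda M_0 = 0$ would force $M_0$ free, contradicting non-freeness of $M$, so $\lambda M$ is genuinely a non-zero stable module. I do not expect to need the generically Gorenstein or unmixed hypotheses here; this lemma is purely formal.
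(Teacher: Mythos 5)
Your proof is correct and follows essentially the same route as the paper: decompose $M$ as a free module plus a non-zero stable summand $M_0$, note $M_0$ is a syzygy module and hence horizontally linked by Theorem \ref{MS2}, apply part (i) to get $\lambda M \cong \lambda M_0$, and conclude stability from Proposition \ref{MS3}. Your explicit verification of (i) via $\Tr F=0$ and additivity of the transpose on minimal presentations is exactly the bookkeeping the paper leaves unwritten.
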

\begin{proof} We only prove (ii). Assume $M=F\oplus N$ where $F$ is a free $R$-module and $N$ is a non-zero stable $R$-module. By (i), $\lambda M = \lambda N$ and by \ref{MS2}, $N$ is horizontally linked. Now \ref{MS3} implies that $\lambda M$ is stable.
\end{proof}

\begin{lem}\label{syz} Let $R$ be a local ring and let $M$ be a non-zero $R$-module. If $n>\max\{0, \depth(R)-\depth(M)\}$,
then $\Omega^n M$ is stable $R$-module; see for example \cite[Corollary 1.2.5]{Av}.
\end{lem}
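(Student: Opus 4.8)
The plan is to reduce to the case $\depth M\geq\depth R$ and then induct on $\depth R$. Throughout, fix a minimal free resolution $\cdots\to F_1\to F_0\to M\to 0$; recall that $\Omega^iM\subseteq\fm F_{i-1}$ for $i\geq 1$, that the truncation $\cdots\to F_{i+1}\to F_i\to\Omega^iM\to 0$ is again a minimal free resolution (so that $\Omega^m(\Omega^iM)\cong\Omega^{m+i}M$), and — applying the depth lemma to the short exact sequences $0\to\Omega^{i+1}M\to F_i\to\Omega^iM\to 0$ — that $\depth_R\Omega^iM\geq\min\{\depth R,\depth M+i\}$. Putting $c:=\max\{0,\depth R-\depth M\}$, the module $M':=\Omega^cM$ therefore satisfies $\depth_RM'\geq\depth R$, while $\Omega^nM\cong\Omega^{n-c}M'$ for all $n\geq c$. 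Since $n>c$ means $n-c\geq 1$, it suffices to prove: \emph{if $\depth_RM\geq\depth R$, then $\Omega^nM$ is stable for every $n\geq 1$} (when $M$ is free this is vacuous, as then $\Omega^nM=0$).

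To prove this, I would induct on $d:=\depth R$. If $d=0$, choose $s\in R$ with $s\neq 0$ and $\fm s=0$; then $s\cdot\fm F_{n-1}=0$, so every submodule of $\fm F_{n-1}$ is annihilated by $s$ and hence cannot have a nonzero free direct summand. As $\Omega^nM\subseteq\fm F_{n-1}$ for $n\geq 1$, this gives the claim.

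For $d\geq 1$, suppose toward a contradiction that $\Omega^nM$ has a nonzero free direct summand for some $n\geq 1$. Then $\Omega^jM\neq 0$ for $0\leq j\leq n$, and by the depth estimate above $R,M,\Omega^1M,\dots,\Omega^nM$ all have depth $\geq d\geq 1$; in particular none of them has $\fm$ as an associated prime, so by prime avoidance there is $x\in\fm$ that is a nonzerodivisor on each of them. Set $\bar R:=R/xR$ and $\bar M:=M/xM$. Since $x$ is regular on $R$, on $M$, and on every $\Omega^jM$, reducing the minimal free resolution of $M$ modulo $x$ produces a minimal free resolution of $\bar M$ over $\bar R$; hence $\Omega^n_{\bar R}\bar M\cong\Omega^n_RM/x\,\Omega^n_RM$, and this module inherits a nonzero free $\bar R$-summand from $\Omega^n_RM$. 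On the other hand $\depth\bar R=d-1$ and $\depth_{\bar R}\bar M=\depth_RM-1\geq d-1=\depth\bar R$, so by the inductive hypothesis applied over $\bar R$ the module $\Omega^n_{\bar R}\bar M$ is stable — a contradiction. This finishes the induction and the proof.

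The step I expect to require the most care is the passage to $\bar R=R/xR$: one must be sure that the $R$-syzygies of $M$ really do reduce to the $\bar R$-syzygies of $\bar M$, which is exactly why $x$ must be chosen regular not merely on $R$ and $M$ but on all the intermediate syzygies $\Omega^1M,\dots,\Omega^nM$ as well, and it is precisely the estimate $\depth_R\Omega^jM\geq\depth R$ — available after the first reduction to $\depth M\geq\depth R$ — that makes such an $x$ exist. Everything else (the depth lemma, minimality of free resolutions, and the inclusion $\Omega^iM\subseteq\fm F_{i-1}$) is routine.
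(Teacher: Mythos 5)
Your proof is correct. The paper itself gives no argument for this lemma --- it simply cites Avramov's \emph{Six lectures} [Av, Corollary 1.2.5] --- so what you have written is a self-contained substitute, and it follows the standard line of reasoning behind that reference: first pass to $M'=\Omega^cM$ with $c=\max\{0,\depth R-\depth M\}$ so that $\depth M'\geq\depth R$, then induct on $\depth R$, killing a socle element in the base case and factoring out an element regular on the ring and the module in the inductive step, using that a minimal free resolution reduces modulo such an element to a minimal free resolution over $R/xR$ and that a free summand survives the reduction. Two small remarks. First, the step you single out as delicate is actually automatic: once $x$ is regular on $R$ it is regular on every submodule of a free module, hence on all the syzygies $\Omega^jM$ ($j\geq1$); equivalently, $x$ regular on $R$ and on $M$ already gives $\Tor_i^R(M,R/xR)=0$ for $i\geq1$, which is all that is needed for the reduced complex to be a (minimal) resolution of $M/xM$. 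Your prime avoidance over all the syzygies is therefore redundant, though harmless, and it does directly justify the splicing argument you have in mind. Second, in the first reduction the case to dispose of is not only ``$M$ free'' but ``$M'=\Omega^cM$ free or zero'' (i.e.\ $\pd M<\infty$), in which case $\Omega^nM=0$ for $n>c$ and the conclusion is vacuous; this is implicitly covered by your contradiction argument, which begins by noting $\Omega^jM\neq0$ for $0\leq j\leq n$, but it is worth saying explicitly.
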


\begin{thm} \label{th1} Let $R$ be a local ring which is not a field. Let $M$ be a non-zero $R$-module and let $n\geq 0$. Assume $\lambda \Omega^nM \cong X \oplus Y$ for some non-zero $R$-module  $X$. Then:
\begin{enumerate}[\rm(i)]
\item $\Tor^R_n(M,X)\neq 0$.
\item If $\pd(X)< \infty$ then $\depth R\geq 1$ and $n\leq \pd(X)< \depth(R)-\depth(M)$.
\end{enumerate}
\end{thm}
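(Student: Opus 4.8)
The plan is to prove (i) first, since (ii) follows from it via the Auslander--Buchsbaum equality and the depth lemma. For (i) with $n=0$ there is nothing to do: $\Tor^R_0(M,X)=M\otimes_R X\neq 0$ by Nakayama's lemma, as $M$ and $X$ are non-zero finitely generated modules over the local ring $R$. So I would assume $n\geq 1$ and fix the minimal free resolution $\cdots\to P_{n+1}\xrightarrow{\partial_{n+1}}P_n\xrightarrow{\partial_n}P_{n-1}\to\cdots\to P_0\to M\to 0$. Since $\lambda\Omega^nM=X\oplus Y\neq 0$, the syzygy $\Omega^nM$ is non-free, so $P_{n+1}\xrightarrow{\partial_{n+1}}P_n\to\Omega^nM\to 0$ is a minimal presentation; hence $\Tr\Omega^nM=\coker(\partial_{n+1}^{*}\colon P_n^{*}\to P_{n+1}^{*})$ and $\lambda\Omega^nM=\Omega\Tr\Omega^nM=\Im(\partial_{n+1}^{*}\colon P_n^{*}\to P_{n+1}^{*})$, with $\Im\partial_{n+1}^{*}\subseteq\fm P_{n+1}^{*}$ by minimality.

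Next I would describe $\Tor^R_n(M,X)=H_n(P_\bullet\otimes_R X)$ using the natural isomorphisms $P_i\otimes_R X\cong\Hom_R(P_i^{*},X)$, under which $\partial_i\otimes 1_X$ corresponds to $g\mapsto g\circ\partial_i^{*}$; thus $\Tor^R_n(M,X)$ is the subquotient $\{g\colon P_n^{*}\to X\mid g\circ\partial_n^{*}=0\}\big/\{h\circ\partial_{n+1}^{*}\mid h\colon P_{n+1}^{*}\to X\}$ of $\Hom_R(P_n^{*},X)$. Let $\pi\colon\lambda\Omega^nM=X\oplus Y\to X$ be the projection, and let $p\colon P_n^{*}\to X$ be the composite of the surjection $\partial_{n+1}^{*}\colon P_n^{*}\twoheadrightarrow\lambda\Omega^nM$ with $\pi$; it is surjective. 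Since the composite $P_{n-1}^{*}\xrightarrow{\partial_n^{*}}P_n^{*}\xrightarrow{\partial_{n+1}^{*}}P_{n+1}^{*}$ equals $(\partial_n\partial_{n+1})^{*}=0$, we get $p\circ\partial_n^{*}=0$, so $p$ represents a class $[p]\in\Tor^R_n(M,X)$.

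The heart of the argument, and the step I expect to be the main obstacle, is showing $[p]\neq 0$. Suppose $[p]=0$, i.e. $p=h\circ\partial_{n+1}^{*}$ for some $h\colon P_{n+1}^{*}\to X$; cancelling the surjection $\partial_{n+1}^{*}\colon P_n^{*}\twoheadrightarrow\lambda\Omega^nM$ shows $\pi=h|_{\lambda\Omega^nM}$, so $\pi$ extends to $P_{n+1}^{*}$. As $\pi$ annihilates $Y$, so does $h$; hence $h$ factors through $P_{n+1}^{*}/Y$ and splits the inclusion $X\cong(X\oplus Y)/Y\hookrightarrow P_{n+1}^{*}/Y$, so $X$ is a direct summand of $P_{n+1}^{*}/Y$. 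Since $(P_{n+1}^{*}/Y)/X=P_{n+1}^{*}/(X\oplus Y)=\Tr\Omega^nM$, this forces $P_{n+1}^{*}/Y\cong X\oplus\Tr\Omega^nM$. But $Y\subseteq\lambda\Omega^nM\subseteq\fm P_{n+1}^{*}$ and $\Im\partial_{n+1}^{*}\subseteq\fm P_{n+1}^{*}$, so $P_{n+1}^{*}/Y$ and $\Tr\Omega^nM$ are each minimally generated by exactly $\rank P_{n+1}^{*}$ elements; comparing minimal numbers of generators in $P_{n+1}^{*}/Y\cong X\oplus\Tr\Omega^nM$ forces $X$ to need $0$ generators, i.e. $X=0$, contradicting the hypothesis. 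Hence $\Tor^R_n(M,X)\neq 0$.

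For (ii), assume $\pd_R X<\infty$. From (i) and $\Tor^R_i(M,X)=0$ for $i>\pd X$ we get $n\leq\pd X$. Because $\Omega^nM$ is a non-free syzygy module, $\lambda\Omega^nM$ is stable by Lemma \ref{stbl}, hence so is its summand $X$; being non-zero it is not free, so $\pd X\geq 1$, and Auslander--Buchsbaum gives $\depth R=\pd X+\depth X\geq 1$. Finally, $\lambda\Omega^nM$ is stable and a syzygy module, hence horizontally linked (Theorem \ref{MS2}), so Proposition \ref{fund} gives $(\lambda\Omega^nM)^{*}\cong\Omega^{n+1}M$; thus $X^{*}$ is a non-zero direct summand of $\Omega^{n+1}M$ (non-zero since $X$ is torsionless), with $X^{*}\cong\Omega\lambda X$ and $\lambda X$ a direct summand of $\Omega^nM$ up to free summands. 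The depth lemma then yields $\depth X^{*}\geq\depth\Omega^{n+1}M\geq\min\{\depth R,\ \depth M+n+1\}$, and combining this with the Auslander--Buchsbaum equality for the finite-projective-dimension modules $X$, $\Omega X\cong(\lambda X)^{*}$ and $X^{*}\cong\Omega\lambda X$ gives $\pd X<\depth R-\depth M$; this last comparison is the point where I would need to be most careful.
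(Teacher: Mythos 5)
Your argument for part (i) is correct, and it takes a genuinely different route from the paper: instead of invoking Takahashi's splitting lemma \cite[Lemma 3.1]{T}, the Auslander--Bridger four-term sequence and the cancellation theorem \cite[Corollary 1.16]{LW}, you exhibit an explicit cycle. Identifying $\lambda\Omega^nM$ with $\Im(\partial_{n+1}^*)\subseteq\fm P_{n+1}^*$ and $\Tor^R_n(M,X)$ with the subquotient of $\Hom_R(P_n^*,X)$ is legitimate, and your punchline is clean: if the class of $p=\pi\circ\partial_{n+1}^*$ were a boundary, then $P_{n+1}^*/Y\cong X\oplus\Tr\Omega^nM$, and comparing minimal numbers of generators (both $P_{n+1}^*/Y$ and $\Tr\Omega^nM$ need exactly $\operatorname{rank}P_{n+1}^*$ generators because $Y$ and $\Im\partial_{n+1}^*$ sit inside $\fm P_{n+1}^*$) forces $X=0$. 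This is more elementary and self-contained than the paper's proof of (i), at the cost of being tied to the explicit minimal resolution; the paper's argument is more structural.

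Part (ii) is where the real problems are. First, a fixable slip: for $n=0$ you assert that $\lambda\Omega^nM$ is stable via Lemma \ref{stbl}(ii), but that lemma needs $M$ to be a syzygy module, which is not assumed; over a discrete valuation ring with $M=k$ one has $\lambda M\cong R$, so your intermediate claims ``$X$ is stable, hence not free, hence $\pd(X)\geq 1$'' are simply false at $n=0$ (the theorem's conclusion survives, but your derivation of $\depth R\geq 1$ does not; the paper handles this case separately, via \ref{syz} applied to $\Omega\Tr M$ when $\depth R=0$, and a socle argument would also do). The serious gap is the strict inequality $\pd(X)<\depth R-\depth M$, which by Auslander--Buchsbaum is exactly the statement $\depth X\geq\depth M+1$. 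The facts you assemble do not yield it: the bound $\depth X^*\geq\min\{\depth R,\depth M+n+1\}$ (from $X^*$ being, up to free summands, a summand of $\Omega^{n+1}M$ by \ref{fund}) controls $X^*$, not $X$ --- you have neither finiteness of $\pd(X^*)$ nor reflexivity of $X$, so no Auslander--Buchsbaum count transfers back; likewise $\Omega X\cong(\lambda X)^*$ only gives dual-type lower bounds on $\depth\Omega X$ of the form $\min\{2,\depth R\}$, which involve neither $\depth M$ nor $n$. So ``combining'' these does not produce the inequality, and you flagged this step yourself without supplying it. The paper's mechanism, which you would need (or a substitute for), is an induction on $\depth M$: the base case $\depth M=0$ uses only that $X$ is a syzygy module, so $\depth X\geq 1$ once $\depth R\geq 1$; the inductive step picks an element $x$ regular on $R$, $M$ and $\lambda\Omega^nM$ simultaneously, notes that forming $\Omega^n$ and $\lambda$ commutes with reduction modulo $x$, and applies the induction hypothesis over $R/(x)$ together with $\pd_{R/(x)}(X/xX)=\pd_R(X)$.
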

\begin{proof} (i) If $n=0$ we have nothing to prove. Suppose $n\geq 1$ and assume to the contrary that $\Tor_n^R(M,X)=0$.
Consider the short exact sequence
\begin{equation} \tag{\ref{th1}.1}
0 \to \lambda\Omega^nM \to F \to \Tr\Omega^nM \to 0
\end{equation}
which follows from the definition of the linkage, where $F$ is free.
By using \cite[Lemma 3.1]{T}, we obtain the following short exact sequences from (\ref{th1}.1)
\begin{equation}\tag{\ref{th1}.2}
0 \to X \to B \to \Tr\Omega^n M \to 0
\end{equation}
\begin{equation}\tag{\ref{th1}.3}
0 \to F \to A\oplus B \to \Tr\Omega^n M \to 0,
\end{equation}
where $A$ and $B$ are $R$-modules. Since $M$ and $X$ are non-zero, and that $n\geq 1$, $\Ext^1_R(\Tr\Omega^n M,R)=0$, by \ref{ExtTr}.
This implies that (\ref{th1}.3) splits, so $A\oplus B \cong F \oplus \Tr\Omega^n M$. It follows from the Auslander- Bridger four term exact sequence \cite[Theorem 2.8]{AB} that $\Ext^1_R(\Tr \Omega^n M, X) \hookrightarrow \Tor_n^R(M,X)=0$. Thus (\ref{th1}.2) splits too, and so $B\cong X \oplus \Tr\Omega^n M$. Hence
\begin{equation}\tag{\ref{th1}.4}
F  \oplus \Tr\Omega^n M \cong A\oplus X \oplus \Tr\Omega^n M
\end{equation}
Now the cancelation property (see \cite[Corollary 1.16]{LW}) applied to (\ref{th1}.4) shows that $F\cong A \oplus X$, i.e., $X$ is free; which is a contradiction, by \ref{stbl} (ii). This proves (i).

(ii) Assume $\pd(X)< \infty$. If $\depth R=0$ then Auslander-Buchsbaum formula implies that $X$ is free and by the last part $n=0$.
By Lemma \ref{stbl} (i), we may assume that $M$ is stable.
Let $F_1\rightarrow F_0 \rightarrow M \rightarrow 0$ be the minimal presentation of $M$. Then $F_0^*\rightarrow F_1^* \rightarrow \Tr M \rightarrow 0$ is the minimal presentation of $\Tr M$. But since $X \oplus Y = \lambda M = \Omega\Tr M$ and $X$ is free, this contradicts \ref{syz}. Thus $\depth R \geq 1$.

For the next part, we proceed by induction on $\depth M$. If $\depth_R M =0$ then we need to show that $\pd(X)<\depth R$. But, as $\depth R \geq 1$ and $X$ is a
syzygy module, $\depth_R X \geq 1$. Hence by the Auslander-Buchsbaum formula $\pd(X) < \depth R$.

Let $\depth M \geq 1$ and choose $x$ to be regular on $R$, $M$ and $\lambda\Omega^n M$. As $\Tor^R_i(R/(x),M)=\Tor^R_i(R/(x),\lambda\Omega^n M)=0$ for all $i>0$, it follows that $\lambda_{R/(x)}\Omega^n(M/xM) \cong \lambda \Omega^nM/x\lambda\Omega^n M \cong X/xX\oplus Y/xY$. By induction, $\pd_{R/(x)}(X/xX)<\depth R/(x) - \depth_{R/(x)} M/xM= \depth R-\depth_RM$. Since $\pd_{R/(x)}(X/xX)=\pd(X)$, we are done.
\end{proof}

Now Theorem \ref{Int4} follows from the next corollary.

\begin{cor} \label{Cor3} Let $(R,\fm,k)$ be a local ring, $L$ a finite length $R$-module, and $n\geq 0$ an integer. Assume that
$\lambda \Omega^nL \cong X \oplus Y$ for some non-zero $R$-module $X$. Then the following statements are equivalent.
\begin{enumerate}[\rm(i)]
\item $\pd(X)<\infty$.
\item $\pd(X)=n$.
\item $\depth R > n$.
\end{enumerate}
\end{cor}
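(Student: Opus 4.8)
The plan is to derive the equivalence from Theorem~\ref{th1} together with Proposition~\ref{fund}, after clearing away a degenerate case. Since $\lambda F=0$ for $F$ free, the assumption $X\neq 0$ forces $\lambda\Omega^n L\neq 0$; in particular $L\neq 0$ and $R$ is not a field, so Theorem~\ref{th1} is available with $M=L$. As $L$ is a nonzero module of finite length we have $\depth_R L=0$ and $\sqrt{\Ann_R L}=\fm$, hence $\grade_R(\Ann_R L)=\depth R$ and therefore $\Ext^i_R(L,R)=0$ for all $i<\depth R$; in particular $L^{\ast}=\Hom_R(L,R)=0$ whenever $\depth R\ge 1$. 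The implication (ii)$\Rightarrow$(i) is trivial, and (i)$\Rightarrow$(iii) is immediate from Theorem~\ref{th1}(ii): if $\pd(X)<\infty$ then $n\le\pd(X)<\depth R-\depth_R L=\depth R$, so $\depth R>n$.

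The substance is (iii)$\Rightarrow$(ii), and the key step I would isolate is that $\depth R>n$ already forces $\pd\bigl(\lambda\Omega^n L\bigr)\le n$. Proposition~\ref{fund}(i) supplies a stable isomorphism $\Omega\bigl(\lambda\Omega^n L\bigr)\underset{st}\cong(\Omega^n L)^{\ast}$. I would then dualize the minimal free resolution $\cdots\to F_1\to F_0\to L\to 0$: applying $\Hom_R(-,R)$ to $0\to\Omega^{i+1}L\to F_i\to\Omega^i L\to 0$ and using $\Ext^1_R(\Omega^i L,R)\cong\Ext^{i+1}_R(L,R)=0$ for $0\le i\le n-1$ (legitimate because $n<\depth R$) produces short exact sequences $0\to(\Omega^i L)^{\ast}\to F_i^{\ast}\to(\Omega^{i+1}L)^{\ast}\to 0$, so $\Omega\bigl((\Omega^j L)^{\ast}\bigr)\underset{st}\cong(\Omega^{j-1}L)^{\ast}$ for $1\le j\le n$. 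Concatenating these $n$ stable isomorphisms yields $\Omega^n\bigl((\Omega^n L)^{\ast}\bigr)\underset{st}\cong L^{\ast}=0$, and hence $\Omega^{n+1}\bigl(\lambda\Omega^n L\bigr)\underset{st}\cong\Omega^n\bigl((\Omega^n L)^{\ast}\bigr)\underset{st}\cong 0$. Thus in a minimal free resolution the $(n+1)$-st syzygy of $\lambda\Omega^n L$ is both stable and stably free, hence zero, so $\pd\bigl(\lambda\Omega^n L\bigr)\le n$. Since $X$ is a direct summand of $\lambda\Omega^n L$ we get $\pd(X)\le n<\infty$, and plugging this into Theorem~\ref{th1}(ii) gives $\pd(X)\ge n$; therefore $\pd(X)=n$.

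The point I expect to require the most care is the bookkeeping with free summands and stable isomorphisms. Because $n<\depth R$, Lemma~\ref{syz} does \emph{not} apply, so $\Omega^n L$ need not be stable and may carry free direct summands; these dualize to free summands, are absorbed into ``$\underset{st}\cong$'', and are ignored by $\lambda$ in view of Lemma~\ref{stbl}(i), so the chain of stable isomorphisms is unaffected. One also needs the (routine) fact that over a local ring a module that is both stable and stably free is zero, which is exactly what promotes ``$\Omega^{n+1}(\lambda\Omega^n L)$ stably free'' to the sharp bound $\pd\le n$. Finally, the case $n=0$ is already contained in the argument: there the chain of dualizations is empty and one uses $\Omega\bigl(\lambda L\bigr)\underset{st}\cong L^{\ast}=0$ directly to conclude that $\lambda L$ is free.
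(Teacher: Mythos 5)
Your overall strategy coincides with the paper's: (ii)$\Rightarrow$(i) is trivial, (i)$\Rightarrow$(iii) and the lower bound $\pd(X)\ge n$ come from Theorem \ref{th1}, and the upper bound is to come from dualizing the minimal free resolution of $L$ using $\Ext^i_R(L,R)=0$ for $i\le n$. The problem is the final bookkeeping step of (iii)$\Rightarrow$(ii). Your chain of stable isomorphisms only shows that $\Omega^{n+1}(\lambda\Omega^nL)$ is stably free, hence free over the local ring $R$; by itself this gives $\pd(\lambda\Omega^nL)\le n+1$, and together with $\pd(X)\ge n$ only $\pd(X)\in\{n,n+1\}$. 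To improve this to $\pd\le n$ you assert that the minimal $(n+1)$-st syzygy of $\lambda\Omega^nL$ is stable, but this is not a routine fact and is false as a general principle: whenever a module has projective dimension exactly $j\ge 1$, its minimal $j$-th syzygy is a nonzero free, hence non-stable, module (e.g.\ $\Omega_R k\cong R$ for $R=k[[x]]$). In the present situation, if $\pd(\lambda\Omega^nL)$ were equal to $n+1$, the syzygy in question would be exactly such a nonzero free module; so your stability claim is equivalent to the conclusion you are trying to reach, and nothing in the argument rules that case out (note $\lambda\Omega^nL$ is only a first syzygy, so Auslander--Buchsbaum gives $\pd\le\depth R-1$, which does not force $\le n$ when $\depth R>n+1$). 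The same conflation appears in your $n=0$ remark: $\Omega(\lambda L)\underset{st}\cong L^*=0$ only yields $\pd(\lambda L)\le 1$, not that $\lambda L$ is free.

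The repair is already contained in your dualized sequences and is precisely what the paper does. Since $\depth R>n\ge 0$ and $L$ has finite length, $L^*=0$, so the first sequence $0\to L^*\to F_0^*\to(\Omega L)^*\to 0$ shows $(\Omega L)^*\cong F_0^*$ is free; concatenating the sequences $0\to(\Omega^iL)^*\to F_i^*\to(\Omega^{i+1}L)^*\to 0$ for $i=0,\dots,n-1$ gives the exact complex $0\to F_0^*\to F_1^*\to\cdots\to F_{n-1}^*\to(\Omega^nL)^*\to 0$, an honest free resolution showing $\pd\bigl((\Omega^nL)^*\bigr)\le n-1$. Then the exact sequence $0\to(\Omega^nL)^*\to G\to\lambda\Omega^nL\to 0$ with $G$ free (the sequence underlying Proposition \ref{fund}(i)) gives $\pd(\lambda\Omega^nL)\le n$, hence $\pd(X)\le n$, with no appeal to stability of minimal syzygies. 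With that substitution your proof matches the paper's.
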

\begin{proof} (i)$\Longrightarrow$(iii) Follows from \ref{th1} (ii).

(iii)$\Longrightarrow$(ii) Assume $\depth R>n$. Note that as $\Tor^R_n(X,L)\neq 0$, by \ref{th1} (i), therefore $\pd(X)\geq n$. We show that $\pd(\lambda\Omega^n L)\leq n$ and hence $\pd(X)\leq n$. Consider the exact sequence which is part of a minimal free resolution of $L$: $$\textbf{G}= (0\rightarrow \Omega^nL \rightarrow F_{n-1} \rightarrow \cdots \rightarrow F_1 \rightarrow F_0 \rightarrow L \rightarrow 0)$$
Since $\depth R>n$ and $L$ is finite length, by dualizing $\textbf{G}$, we obtain the exact sequence:
$$\textbf{G}^{\ast}=(0\rightarrow F_0^* \rightarrow F_1^* \rightarrow \cdots \rightarrow F_{n-1}^* \rightarrow (\Omega^nL)^* \rightarrow 0).$$
It follows that $\pd(\Omega^n L)^* \leq n-1$ and so $\pd(\lambda \Omega^n L)\leq n$.

(ii)$\Longrightarrow$(i) It is clear.
\end{proof}

\begin{cor} \label{5.4} Let $R$ be a local ring and let $M$ be a non-zero $R$-module of finite injective dimension. Let $n\geq 0$ be a positive integer such that $\Omega^nM \cong X \oplus Y$ for some non-zero $R$-module $X$ with $\gd(X)<\infty$. Then $\pd(X)<\infty$.
\end{cor}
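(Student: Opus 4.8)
The plan is to reduce to the case where $X$ is totally reflexive and then to prove that such an $X$ must be free. First I would pass to the completion: this preserves finiteness of $\pd$, $\gd$ and $\id$, and $\Omega^{n}\widehat{M}\cong\widehat{\Omega^{n}M}$, so $\widehat{X}$ remains a direct summand of a syzygy of $\widehat{M}$; hence assume $R$ is complete. Since the nonzero module $M$ has finite injective dimension, $R$ is Cohen--Macaulay by Bass's theorem and admits a canonical module $\omega$, with $\id_{R}M=\depth R=:d$. Writing $g:=\gd_{R}X$, the syzygy $X':=\Omega^{g}X$ is totally reflexive and is a direct summand of $\Omega^{n+g}M$; since a totally reflexive module of finite projective dimension is free, $\pd_{R}X<\infty$ if and only if $X'$ is free. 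So, replacing $X$ by $X'$ and $n$ by $n+g$, the task becomes: \emph{show that a totally reflexive direct summand $X$ of $\Omega^{n}M$ is free.}

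The first substantive point is that $\Ext^{i}_{R}(X,M)=0$ for every $i\ge 1$. Indeed, since $X$ is totally reflexive it fits, for each $m\ge 0$, into an exact sequence $0\to X\to P_{-1}\to\cdots\to P_{-m}\to C_{m}\to 0$ with the $P_{-j}$ free and $C_{m}$ totally reflexive (the negative part of a complete resolution of $X$), so for fixed $i\ge 1$ one has $\Ext^{i}_{R}(X,M)\cong\Ext^{i+m}_{R}(C_{m},M)$, which vanishes once $m>d$. Dually, by Matlis duality $\Tor^{R}_{i}(X,M^{\vee})=0$ for all $i\ge 1$, where $M^{\vee}:=\Hom_{R}(M,E(k))$ is an Artinian module with $\fd_{R}M^{\vee}=\id_{R}M=d$. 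I would also dispatch at once the subcase in which $X$ is a direct summand of $M$ itself: then $\id_{R}X<\infty$ too, so $R$ is Gorenstein (a nonzero module of finite Gorenstein dimension and finite injective dimension exists only over a Gorenstein ring), and over a Gorenstein ring finite injective dimension coincides with finite projective dimension; hence $X$ is free.

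For the general case, note first that if $R$ is Gorenstein there is nothing to prove, since then $\id_{R}M<\infty$ already forces $\pd_{R}M<\infty$, hence $\pd_{R}X<\infty$; so assume $R$ is not Gorenstein, and the goal is to reach a contradiction from the assumption that $X$ is nonzero, totally reflexive, and a direct summand of $\Omega^{n}M$ with $n\ge 1$. Here I would exploit that $M$ lies in the Bass class of $\omega$: by Foxby equivalence $M\cong\omega\otimes_{R}P$ with $P$ finitely generated, $\pd_{R}P<\infty$, and $\Tor^{R}_{>0}(\omega,P)=0$, so tensoring a minimal free resolution of $P$ with $\omega$ produces a finite exact sequence $0\to\omega^{b_{t}}\to\cdots\to\omega^{b_{0}}\to M\to 0$; iterating the horseshoe construction on this sequence shows that $\Omega^{n}M$ is, up to free summands, assembled by finitely many short exact sequences from the syzygy modules $\Omega^{n-j}\omega$ $(0\le j\le\min\{n,\pd_{R}P\})$. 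By Krull--Schmidt over the complete ring $R$, a nonzero totally reflexive summand $X$ would force a nonzero totally reflexive direct summand of some $\Omega^{j}\omega$. Showing that this is impossible --- equivalently, that no syzygy of $\omega$ (or of any module of finite injective dimension) carries a nonzero totally reflexive direct summand --- is the crux and the step I expect to be hardest; I would attack it by combining the vanishing $\Ext^{i}_{R}(X,M)=0$ $(i\ge 1)$ with the theorem on the coexistence of finite Gorenstein and injective dimensions, or with a depth count via the Chouinard formula applied to the nonzero module $X\otimes_{R}M^{\vee}$, but turning these ingredients into the desired contradiction is where the real work remains.
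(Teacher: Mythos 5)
Your reductions are fine (completing, replacing $X$ by $\Omega^{\gd(X)}X$, the vanishing $\Ext^{i}_R(X,M)=0$ for $i\ge 1$, and the easy cases $n=0$ or $R$ Gorenstein), but the proof stops exactly where the statement actually lives: you never show that a nonzero totally reflexive direct summand of $\Omega^nM$ is impossible when $\id_R(M)<\infty$ and $R$ is not Gorenstein, and you say yourself that ``the real work remains.'' Moreover, the route you sketch for that crux does not work as stated: from the Foxby-equivalence resolution $0\to\omega^{b_t}\to\cdots\to\omega^{b_0}\to M\to 0$ the horseshoe construction only exhibits $\Omega^nM$ as an iterated \emph{extension} of syzygies $\Omega^{n-j}\omega$, and direct summands of the middle term of a non-split short exact sequence need not induce summands of the outer terms; Krull--Schmidt over the complete ring gives you nothing here, so you cannot reduce to ``no $\Omega^j\omega$ has a totally reflexive summand.'' (As a side remark, invoking the coexistence theorem for finite Gorenstein and injective dimension is legitimate only as an external citation to Holm; in this paper that statement is deduced \emph{from} the corollary you are proving.)

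The paper closes the gap with an ingredient your proposal never touches, namely Theorem \ref{th1}(i): if $\lambda\Omega^iM$ has a nonzero direct summand $Z$, then $\Tor^R_i(M,Z)\neq 0$. Concretely, choose $i\geq\max\{n,\depth R-\depth M\}$, so that $\Omega^{i-n}X$ is totally reflexive and $\Omega^iM\cong\Omega^{i-n}X\oplus\Omega^{i-n}Y$, whence $\lambda\Omega^iM\cong\lambda\Omega^{i-n}X\oplus\lambda\Omega^{i-n}Y$. If $\lambda\Omega^{i-n}X\neq 0$, Theorem \ref{th1}(i) forces $\Tor^R_i(M,\lambda\Omega^{i-n}X)\neq 0$, while the standard vanishing of $\Tor$ against a module of finite injective dimension and a totally reflexive module (\cite[Corollary 2.4.4]{C}) forces it to vanish --- note this is precisely the ``Tor-side'' analogue of the $\Ext$-vanishing you did establish, but by itself that vanishing proves nothing; it is the \emph{non}-vanishing statement of Theorem \ref{th1}(i) (proved via Takahashi's splitting lemma and the Auslander--Bridger four-term sequence) that produces the contradiction. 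Hence $\lambda\Omega^{i-n}X=0$, i.e.\ $\Omega^{i-n}X$ is free and $\pd(X)\leq i-n$. Without this non-vanishing input (or a genuine substitute for it), your argument does not reach the conclusion.
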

\begin{proof} Choose $i\geq \max\{n,\depth R-\depth M \}$. Then $\Omega^iM \cong \Omega^{i-n}X\oplus \Omega^{i-n}Y$.
Hence $\lambda\Omega^iM \cong \lambda\Omega^{i-n}X\oplus \lambda\Omega^{i-n}Y$ and by \ref{th1} (i), if $\lambda\Omega^{i-n}X \neq 0$ then $\Tor^R_i(M,\lambda\Omega^{i-n}X)\neq 0$. But as $\id(M)<\infty$ and $\Omega^{i-n}X$ is totally reflexive, $\Tor^R_i(M,\lambda\Omega^{i-n}X)= 0$; see \cite[Corollary 2.4.4]{C}. Thus we must have $\lambda\Omega^{i-n}X=0$ and therefore $\Omega^{i-n}X$ is zero or free.
\end{proof}

\begin{cor} (\cite[Theorem 3.2 (i)]{Holm}) Let $R$ be a local ring and let $M$ be a non-zero $R$-module.
Assume $\id(M)<\infty$ and $\gd(M)<\infty$. Then $R$ is Gorenstein.
\end{cor}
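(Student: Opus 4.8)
The plan is to obtain the corollary directly from Corollary \ref{5.4}. First I observe that $M$ itself satisfies the hypotheses of that corollary in a degenerate way: with $n=0$ and the trivial decomposition $M\cong M\oplus 0$, the summand $X:=M$ is non-zero with $\gd(X)=\gd(M)<\infty$, and $\id(M)<\infty$ by assumption; hence Corollary \ref{5.4} gives $\pd(M)=\pd(X)<\infty$. (If one prefers to read ``$n\geq 0$'' in the statement of Corollary \ref{5.4} as ``$n\geq 1$'', apply it instead with $n=1$ and $X=\Omega M$: when $M$ is free we already have $\pd(M)=0$, and otherwise $\Omega M\neq 0$ and $\gd(\Omega M)<\infty$, so $\pd(\Omega M)<\infty$ and thus $\pd(M)<\infty$.) The problem is thereby reduced to the classical statement that a local ring admitting a non-zero finitely generated module $M$ with $\pd_R(M)<\infty$ and $\id_R(M)<\infty$ must be Gorenstein.

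For this last step I would recall the standard argument (this is \cite[Theorem 3.2]{Holm}, essentially due to Foxby). By the Auslander--Buchsbaum formula $\pd_R(M)=\depth R-\depth_R M$ and by Bass's theorem $\id_R(M)=\depth R$, so both dimensions are finite. Since $\pd_R(M)<\infty$ the module $M$ is perfect, so $\uhom_R(M,M)\simeq\uhom_R(M,R)\utp_R M$; on the other hand, computing $\uhom_R(M,M)$ from a finite free resolution of $M$ presents it as a bounded complex each of whose terms is a finite direct sum of copies of $M$, hence a complex of finite injective dimension. Passing to the completion $\widehat R$ — which carries a dualizing complex $D$ — the duality $\uhom_{\widehat R}(-,D)$ takes complexes of finite injective dimension to complexes of finite projective dimension, so from $\widehat M\simeq\uhom_{\widehat R}(\uhom_{\widehat R}(\widehat M,D),D)$ together with $\pd_{\widehat R}(\widehat M)<\infty$ one finds that $D$ is perfect locally at every prime in the support of $\widehat M$, in particular at the maximal ideal; thus $D$ is perfect and $\widehat R$, hence $R$, is Gorenstein.

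I do not anticipate a genuine obstacle: the real content is already packaged in Corollary \ref{5.4} (which in turn rests on Theorem \ref{th1} and the vanishing of $\Tor$ of a finite-injective-dimension module against totally reflexive modules), and the residual implication ``finite projective and finite injective dimension $\Rightarrow$ Gorenstein'' is a standard fact that may simply be cited. The only point that needs a moment's attention is the legitimacy of the degenerate application of Corollary \ref{5.4} with $n=0$, which is precisely what the parenthetical remark (using $n=1$, $X=\Omega M$) takes care of.
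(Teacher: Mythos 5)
Your proposal is correct and follows essentially the same route as the paper: apply Corollary \ref{5.4} (with the trivial decomposition, or with $X=\Omega M$ if one insists on $n\geq 1$) to conclude $\pd_R(M)<\infty$, and then invoke the classical fact, due to Foxby, that a non-zero module of finite projective and finite injective dimension forces $R$ to be Gorenstein. The paper simply cites Foxby for this last step, whereas you additionally sketch its proof, but the argument is the same.
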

\begin{proof} We conclude from \ref{5.4} that $\pd(M)<\infty$. Now Foxby's result \cite{F}, implies that $R$ is Gorenstein.
\end{proof}

Our aim in the rest of the paper is to prove a counterpart of \ref{Cor3} in case of finite Gorenstein and complete intersection dimension.
Recall from \cite{AGP} that a diagram of local ring maps $R \to R' \twoheadleftarrow Q$ is called a {\em quasi-deformation} provided that $R \to R'$ is flat and the kernel of the surjection $R' \twoheadleftarrow Q$ is generated by a $Q$-regular sequence. The \emph{complete intersection dimension} of $M$ is:
\begin{equation*}
\CI(M) = \inf\{ \pd_Q(M \otimes_{R} R') - \pd_Q(R') \ | \
R \to R' \twoheadleftarrow Q \ {\text{is a quasi-deformation}}\}.
\end{equation*}

\begin{lem} \label{Gdimlem} Let $R$ be a local ring, $M$ an $R$-module, and let $n\geq \depth(R)$. Then the following conditions are equivalent:
\begin{enumerate}[\rm(i)]
\item $\gd(\Tr \Omega^n M)<\infty$.
\item $\gd(M)<\infty$.
\item $\gd(\Tr \Omega^n M)\le 0$.
\item $\gd(\Omega^n M)\le 0$.
\end{enumerate}
\end{lem}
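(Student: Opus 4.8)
The plan is to establish the equivalences through the chain (ii)$\,\Leftrightarrow\,$(iv)$\,\Leftrightarrow\,$(iii)$\,\Rightarrow\,$(i), together with the single substantive implication (i)$\,\Rightarrow\,$(iii).

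For (ii)$\,\Leftrightarrow\,$(iv) I would use the Auslander--Bridger formula \cite{AB}: if $\gd(M)<\infty$ then $\gd(M)=\depth R-\depth M\le\depth R\le n$, so the $n$th syzygy $\Omega^nM$ is totally reflexive; conversely, if $\Omega^nM$ is totally reflexive, the exact sequence $0\to\Omega^nM\to F_{n-1}\to\cdots\to F_0\to M\to 0$ is a resolution of $M$ of length $n$ by totally reflexive modules, whence $\gd(M)\le n<\infty$. For (iv)$\,\Leftrightarrow\,$(iii) I would invoke that the transpose of a totally reflexive module is again totally reflexive, together with $\Tr\Tr L\cong L$ modulo free summands (and the remark that $\gd(\Tr\Omega^nM)\le 0$ with $\Tr\Omega^nM=0$ simply says $\Omega^nM$ is free). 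The implication (iii)$\,\Rightarrow\,$(i) is trivial.

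The heart of the matter is (i)$\,\Rightarrow\,$(iii). Set $W=\Tr\Omega^nM$ and assume $\gd(W)<\infty$; we may assume $W\ne0$, so $g:=\gd(W)\ge0$ is finite. First, the Auslander--Bridger formula gives $g=\depth R-\depth W\le\depth R$, and by hypothesis $\depth R\le n$, hence $g\le n$: this is the only place the assumption $n\ge\depth R$ is used. Next, since $\Omega^nM$ is an $n$th syzygy module, the higher-order torsionfree theory of Auslander--Bridger (the analogue of Proposition \ref{ExtTr} for $n$th syzygies) yields $\Ext^i_R(W,R)=\Ext^i_R(\Tr\Omega^nM,R)=0$ for $1\le i\le n$; since $g\le n$ this in particular kills $\Ext^i_R(W,R)$ for $1\le i\le g$, while $\Ext^i_R(W,R)=0$ for $i>g$ by finiteness of the Gorenstein dimension. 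Thus $\Ext^i_R(W,R)=0$ for all $i\ge1$. Finally I would appeal to the standard fact that a nonzero module of finite Gorenstein dimension $g\ge1$ has $\Ext^g_R(-,R)\ne0$ (one proves this directly by dualizing the sequence $0\to\Omega^gW\to F_{g-1}\to\Omega^{g-1}W\to0$ twice: the vanishing of $\Ext^g_R(W,R)$ forces $(\Omega^{g-1}W)^{\ast}$ to be a syzygy of the totally reflexive module $(\Omega^gW)^{\ast}$, hence totally reflexive, and a second dualization identifies $(\Omega^{g-1}W)^{\ast\ast}$ with $\Omega^{g-1}W$, so $\Omega^{g-1}W$ is totally reflexive, contradicting $\gd(W)=g$). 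Applying this to $W$ forces $g=0$, i.e.\ $\gd(\Tr\Omega^nM)\le0$, which is (iii).

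I expect the main obstacle to be the last implication, and within it the clean handling of the two facts about Gorenstein dimension I invoked --- that an $n$th syzygy $\Omega^nM$ has $\Ext^i_R(\Tr\Omega^nM,R)=0$ for $1\le i\le n$, and that finite Gorenstein dimension is detected by the top nonvanishing $\Ext$ into $R$. Neither is deep, but both live in Auslander--Bridger \cite{AB} and should be quoted or re-derived carefully. The decisive numerical input is simply the comparison $\gd(\Tr\Omega^nM)\le\depth R\le n$, which makes the $\Ext$-vanishing coming from ``$n$th syzygy'' reach exactly as far as the $\Ext$-vanishing needed to force total reflexivity.
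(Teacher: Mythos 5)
Your treatment of (ii)$\Leftrightarrow$(iv)$\Leftrightarrow$(iii)$\Rightarrow$(i) is correct and coincides with the paper's, but the heart of your argument for (i)$\Rightarrow$(iii) contains a genuine gap. You claim that, because $\Omega^nM$ is an $n$th syzygy, ``the analogue of Proposition \ref{ExtTr} for $n$th syzygies'' gives $\Ext^i_R(\Tr\Omega^nM,R)=0$ for $1\le i\le n$. No such analogue exists: what Auslander--Bridger prove is the reverse implication, namely that $n$-torsionfreeness ($\Ext^i_R(\Tr N,R)=0$ for $1\le i\le n$) implies that $N$ is an $n$th syzygy. The converse is exactly Proposition \ref{ExtTr} when $n=1$, but for $n\ge 2$ it fails in general: a second syzygy need not be reflexive (equivalently $2$-torsionfree) unless one imposes extra hypotheses on $R$ or on the module, such as $R$ being Gorenstein in low codimension or the module having finite G-dimension locally. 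A telling symptom is that your $\Ext$-vanishing step makes no use of hypothesis (i) at all; if it were correct as stated, it would assert that every $n$th syzygy over every local ring is $n$-torsionfree, which is false.

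The paper closes precisely this gap by a different route: it extracts the vanishing $\Ext^i_R(\Tr\Omega^nM,R)=0$ only in the range $1\le i\le t:=\depth R$ (which suffices, since by the Auslander--Bridger formula $\gd(\Tr\Omega^nM)\le t$ once it is finite), and it obtains this vanishing not from ``$n$th syzygy'' alone but from the depth inequality $\depth(\Omega^nM)_\fp\ge\min\{n,\depth R_\fp\}$ \emph{combined with the finiteness hypothesis} $\gd(\Tr\Omega^nM)<\infty$, via \cite[Theorem 5.8(1)]{CGSZ}. Your remaining bookkeeping --- $g:=\gd(\Tr\Omega^nM)\le\depth R\le n$, and $\Ext^g_R(\Tr\Omega^nM,R)\ne 0$ whenever $1\le g<\infty$ (this is \cite[Theorem 1.2.7]{C}, and your sketch of it is fine) --- matches the paper's contradiction argument. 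So to repair the proof you must replace the false ``syzygy $\Rightarrow$ torsionfree'' principle by a statement that genuinely uses the finite G-dimension of $\Tr\Omega^nM$ (or an equivalent additional hypothesis), as the paper does.
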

\begin{proof} Note that (ii) $\Longleftrightarrow$ (iv) and (iii) $\Longrightarrow$ (i) follow by definition. Moreover as the transpose of a totally reflexive module is also totally reflexive, (iii) $\Longleftrightarrow$ (iv) is clear. Hence it is enough to show that (i) $\Longrightarrow$ (iii).

Set $t=\depth R$ and assume $\gd(\Tr \Omega^n M)<\infty$. Note that $\depth(\Omega^nM)_p\geq \min\{n,\depth (R_p)\}$, for all $\fp \in \Supp_R(\Omega^nM)$.  It follows from \cite[Theorem 5.8 (1)]{CGSZ} that $\Ext^i_R(\Tr \Omega^n M, R)=0$ for all $i=1, \ldots, t$. Since $\gd(\Tr \Omega^n M)=\sup\{i \in \ZZ: \Ext^i_R(\Tr \Omega^n M,R)\neq 0\}$; see \cite[Theorem 1.2.7 (iii)]{C}, if $\gd(\Tr \Omega^n M)> 0$, then we must have $\gd(\Tr \Omega^n M)> t$, which is impossible, by Auslander-Bridger formula; see \cite{C}. Consequently $\gd(\Tr \Omega^n M)\leq 0$ and (iii) follows.
\end{proof}

\begin{thm}\label{T4} Let $(R,\fm,k)$ be a local ring, let $M$ be an $R$-module of finite length, and let $n\geq 0$ be an integer.
Let $\hd$ stand for one of projective, complete intersection or Gorenstein dimension, and assume $\hd_R(M)=\infty$.
Then the following conditions are equivalent:
\begin{enumerate}[\rm(i)]
\item $\depth(R)\leq n$.
\item $\hd(\lambda\Omega^nM)=\infty$.
\end{enumerate}
In particular $\depth R=\inf\{n| \hd(\lambda\Omega^n M)=\infty\}$.
\end{thm}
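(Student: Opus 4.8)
The plan is to prove the equivalence (i)$\Leftrightarrow$(ii) for each of the three choices of $\hd$; the concluding ``in particular'' is then immediate, since (i)$\Leftrightarrow$(ii) identifies $\{n\ge 0\mid\hd(\lambda\Omega^nM)=\infty\}$ with $\{n\ge 0\mid n\ge\depth R\}$, whose infimum is $\depth R$. Two preliminary remarks: because $M$ has finite length, $\depth M=0$; and because $\hd_R(M)=\infty$, every $\Omega^nM$ is non-free (otherwise $\pd M\le n<\infty$), so $\lambda\Omega^nM=\Omega\Tr\Omega^nM\ne 0$ and may legitimately be taken as the non-zero summand $X$ (with $Y=0$) in Corollary \ref{Cor3} and Theorem \ref{th1}.

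I would prove (ii)$\Rightarrow$(i) contrapositively and uniformly in $\hd$. If $\depth R>n$, then, exactly as in the proof of the implication (iii)$\Rightarrow$(ii) of Corollary \ref{Cor3}, dualizing the truncated minimal free resolution $0\to\Omega^nM\to F_{n-1}\to\cdots\to F_0\to M\to 0$ and using that $M$ has finite length gives $\pd((\Omega^nM)^{*})\le n-1$, hence $\pd(\lambda\Omega^nM)\le n$. Since each of projective, complete intersection and Gorenstein dimension is bounded above by projective dimension, $\hd(\lambda\Omega^nM)\le n<\infty$, so (ii) fails.

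For (i)$\Rightarrow$(ii) assume $\depth R\le n$; then $n\ge\depth R$, so Lemma \ref{Gdimlem} applies. For $\hd=\pd$ this is exactly the contrapositive of the implication (i)$\Rightarrow$(iii) of Corollary \ref{Cor3}. For $\hd=\gd$: since $\lambda\Omega^nM=\Omega\Tr\Omega^nM$ and finite Gorenstein dimension passes both ways between a module and a first syzygy, Lemma \ref{Gdimlem} yields $\gd(\lambda\Omega^nM)<\infty\iff\gd(\Tr\Omega^nM)<\infty\iff\gd(M)<\infty$, and $\gd(M)=\infty$ forces $\gd(\lambda\Omega^nM)=\infty$.

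The case $\hd=\CI$ is the crux, and I expect it to be the main obstacle, since there is no complete intersection analogue of Lemma \ref{Gdimlem} to quote. I would argue by contradiction: suppose $\CI(\lambda\Omega^nM)<\infty$. As $\gd\le\CI$, also $\gd(\lambda\Omega^nM)<\infty$, hence $\gd(\Tr\Omega^nM)<\infty$, and Lemma \ref{Gdimlem} (using $n\ge\depth R$) forces $\Tr\Omega^nM$, and therefore $\Omega^nM$, to be totally reflexive. Moreover $\CI(\Tr\Omega^nM)<\infty$, because finiteness of complete intersection dimension passes from a first syzygy to the module. Now invoke the standard properties of complete intersection dimension from \cite{AGP}: it is stable under first syzygies and, more delicately, finite $\CI$-dimension is inherited by the transpose (equivalently, the $\Hom_R(-,R)$-dual) of a totally reflexive module. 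Since $\Omega^nM\underset{st}\cong\Tr\Tr\Omega^nM$, we obtain $\CI(\Omega^nM)<\infty$, hence $\CI(M)<\infty$, contradicting $\CI_R(M)=\infty$; therefore $\CI(\lambda\Omega^nM)=\infty$. The transpose-invariance used here is the only non-formal ingredient: if one does not wish to quote it, it can be reestablished from the quasi-deformation characterization of $\CI$-dimension by a change-of-rings argument along the defining regular sequence of the quasi-deformation, using total reflexivity to see that the relevant module over the middle ring has finite projective dimension.
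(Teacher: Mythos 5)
Your proof is correct and follows essentially the same route as the paper's: the contrapositive dualizing argument (as in Corollary \ref{Cor3}) for (ii)$\Rightarrow$(i), and Lemma \ref{Gdimlem} together with passage through $\Tr\Omega^nM$ for (i)$\Rightarrow$(ii), with the CI case reduced, exactly as in the paper, to the transpose-invariance of complete intersection dimension zero for totally reflexive modules. The only divergence is bookkeeping: the paper cites precisely that transpose step as \cite[Lemma 3.2 (i)]{CST} rather than \cite{AGP} (where it is not stated), and the quasi-deformation/change-of-rings argument you sketch is the standard proof of it, so nothing essential is missing.
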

\begin{proof} (i)$\Longrightarrow$(ii) Let $n\geq\depth R$ and assume to the contrary that $\hd(\lambda\Omega^n M)<\infty$. If $\hd(\lambda\Omega^n M) = \gd(\lambda\Omega^n M)$, then it follows that $\gd(\Tr\Omega^n M)<\infty$ and by Lemma \ref{Gdimlem}, $\gd(\Omega^n M)<\infty$ and hence $\gd(M)<\infty$ which is a contradiction.

If $\hd(\lambda\Omega^n M)=\ci_R(\lambda\Omega^n M)$, then $\gd(\lambda\Omega^n M)<\infty$ and so $\gd(\Tr\Omega^n M)<\infty$. Hence by Lemma \ref{Gdimlem}, $\gd_R(\Tr\Omega^nM)\leq 0$ and so $\ci_R(\Tr\Omega^nM) \leq 0$.
Thus $\ci_R(\Omega^n M)=0$, by \cite[Lemma 3.2 (i)]{CST}. Hence $\ci_R(M)<\infty$, which is a contradiction.

(ii)$\Longrightarrow$(i) First assume $n=0$ and $\hd(\lambda M)=\infty$. We show that $\depth(R)=0$. Consider the exact sequence $0\rightarrow M^*\rightarrow F\rightarrow \lambda M\rightarrow 0$, where $F$ is a free $R$-module. Since $\hd(\lambda M)=\infty$, one has $M^*\neq 0$. Therefore, as $M$ is a finite length $R$-module, one gets $\depth(R)=0$.

Now assume $n>0$. It is enough to show that if $\depth(R)>n$ then $\hd(\lambda\Omega^nM)<\infty$ and this follows by the same argument of \ref{Cor3} (iii)$\Longrightarrow$(ii).
\end{proof}

\begin{cor}
Let $(R,\fm,k)$ be a local ring, $n\geq\depth R$ be a positive integer. Then the following statements are true.
\begin{enumerate}[\rm(i)]
\item{$R$ is Gorenstein if and only if $\gd_R(\lambda\Omega^nk)<\infty$.}
\item{$R$ is a complete intersection if and only if $\ci_R(\lambda\Omega^nk)<\infty$.}
\end{enumerate}
\end{cor}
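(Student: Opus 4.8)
The plan is to deduce both statements directly from Theorem \ref{T4} applied to the residue field $M=k$, which has finite length, together with the classical characterizations of Gorenstein and complete intersection rings in terms of $k$.

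The two inputs I would record are: (a) $R$ is Gorenstein if and only if $\gd_R(k)<\infty$, and over a Gorenstein local ring every finitely generated module has finite Gorenstein dimension; (b) $R$ is a complete intersection if and only if $\ci_R(k)<\infty$, and over such a ring every finitely generated module has finite complete intersection dimension (this is part of the Avramov--Gasharov--Peeva theory recalled before Theorem \ref{T4}). Granting these, the forward implications are immediate: if $R$ is Gorenstein (respectively, a complete intersection), then the finitely generated $R$-module $\lambda\Omega^nk$ has finite Gorenstein dimension (respectively, finite complete intersection dimension), so $\gd_R(\lambda\Omega^nk)<\infty$ (respectively, $\ci_R(\lambda\Omega^nk)<\infty$).

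For the converse in part (i), I would argue by contradiction: suppose $\gd_R(\lambda\Omega^nk)<\infty$ but $R$ is not Gorenstein. Then $\gd_R(k)=\infty$, so Theorem \ref{T4} applies to $M=k$ with $\hd$ equal to Gorenstein dimension. Since by hypothesis $n\geq\depth R$, condition (i) of that theorem holds, hence so does condition (ii), i.e. $\gd_R(\lambda\Omega^nk)=\infty$ — contradicting the assumption. Therefore $R$ is Gorenstein. The converse in part (ii) is proved in exactly the same way, using that $R$ not a complete intersection forces $\ci_R(k)=\infty$ and taking $\hd$ to be complete intersection dimension in Theorem \ref{T4}.

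I do not expect a genuine obstacle here: once Theorem \ref{T4} is available, all that remains is to note that $k$ has finite length (immediate) and to invoke the standard facts (a) and (b). As a consistency check, in the degenerate case where $R$ is regular one has $\lambda\Omega^nk=0$ for $n\geq\depth R=\dim R$, and $\gd_R(0)=\ci_R(0)=-\infty<\infty$, in agreement with $R$ being both Gorenstein and a complete intersection, so both biconditionals still hold.
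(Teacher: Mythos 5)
Your proposal is correct and is exactly the argument the paper intends: the corollary is stated without proof as an immediate consequence of Theorem \ref{T4} applied to $M=k$ (which has finite length), combined with the standard facts that $R$ is Gorenstein (resp.\ a complete intersection) if and only if $\gd_R(k)<\infty$ (resp.\ $\ci_R(k)<\infty$), and that over such rings all finitely generated modules have finite Gorenstein (resp.\ complete intersection) dimension. Your handling of the converse via the contrapositive and Theorem \ref{T4}, and the check of the degenerate case $\lambda\Omega^nk=0$, are both fine.
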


\begin{acknowledgement}
\emph{Part of this work was completed when Celikbas and Takahashi visited
Centre de Recerca Matem\`{a}tica during the conference; Opening Perspectives
in Algebra, Representations, and Topology, in May 2015. The authors acknowledge
the kind hospitality of CRM and thank the institute for providing an excellent research
environment.}

\emph{Part of this work was completed when Gheibi visited Celikbas at the University of Connecticut
in October 2015. Gheibi thanks Jerzy Weyman and Roger Wiegand for supporting his visit, and is grateful for the kind hospitality of the UConn Department of Mathematics.}

\emph{The authors thank Roger Wiegand for his careful comments and suggestions on the manuscript, and Thanh Vu for his help with Example \ref{Ex2}.}
\end{acknowledgement}

\bibliographystyle{99}

\end{document}